\newcommand*{\mailto}[1]{\href{mailto:#1}{\nolinkurl{#1}}}
\newcommand{\arxiv}[1]{\href{http://arxiv.org/abs/#1}{arXiv:#1}}
\newtheorem{theorem}{Theorem}[section]
\newtheorem{lemma}[theorem]{Lemma}
\newtheorem{corollary}[theorem]{Corollary}
\newtheorem{remark}[theorem]{Remark}
\newtheorem{hypothesis}[theorem]{Hypothesis}
\newcommand{\R}{{\mathbb R}}
\newcommand{\N}{{\mathbb N}}
\newcommand{\Z}{{\mathbb Z}}
\newcommand{\C}{{\mathbb C}}
\newcommand{\nn}{\nonumber}
\newcommand{\be}{\begin{equation}}
\newcommand{\ee}{\end{equation}}
\newcommand{\ol}{\overline}
\newcommand{\ti}{\tilde}
\newcommand{\spr}[2]{\langle #1 , #2 \rangle}
\newcommand{\E}{\mathrm{e}}
\newcommand{\I}{\mathrm{i}}
\newcommand{\im}{\mathrm{Im}}
\newcommand{\re}{\mathrm{Re}}
\newcommand{\dom}{\mathfrak{D}}
\DeclareMathOperator{\ran}{ran}
\newcommand{\floor}[1]{\lfloor#1 \rfloor}
\newcommand{\ceil}[1]{\lceil#1 \rceil}
\newcommand{\eps}{\varepsilon}
\newcommand{\sig}{\sigma}
\newcommand{\lam}{\lambda}
\newcommand{\gam}{\gamma}
\newcommand{\Lam}{\Lambda}
\newcommand{\ta}{\theta}
\numberwithin{equation}{section}
\begin{document}

\title[Weyl--Titchmarsh Theory for Strongly Singular Potentials]{Weyl--Titchmarsh Theory for Schr\"odinger Operators with Strongly Singular Potentials}

\author[A.\ Kostenko]{Aleksey Kostenko}
\address{Institute of Applied Mathematics and Mechanics\\
NAS of Ukraine\\ R. Luxemburg str. 74\\
Donetsk 83114\\ Ukraine\\ and School of Mathematical Sciences\\
Dublin Institute of Technology\\
Kevin Street\\ Dublin 8\\ Ireland}
\email{\mailto{duzer80@gmail.com}}

\author[A.\ Sakhnovich]{Alexander Sakhnovich}
\address{Faculty of Mathematics\\ University of Vienna\\
Nordbergstrasse 15\\ 1090 Wien\\ Austria}
\email{\mailto{Oleksandr.Sakhnovych@univie.ac.at}}
\urladdr{\url{http://www.mat.univie.ac.at/~sakhnov/}}

\author[G.\ Teschl]{Gerald Teschl}
\address{Faculty of Mathematics\\ University of Vienna\\
Nordbergstrasse 15\\ 1090 Wien\\ Austria\\ and International
Erwin Schr\"odinger
Institute for Mathematical Physics\\ Boltzmanngasse 9\\ 1090 Wien\\ Austria}
\email{\mailto{Gerald.Teschl@univie.ac.at}}
\urladdr{\url{http://www.mat.univie.ac.at/~gerald/}}

\thanks{Int. Math. Res. Not. {\bf 2012}, 1699--1747 (2012)}
\thanks{{\it Research supported by the Austrian Science Fund (FWF) under Grant No.\ Y330}}

\keywords{Schr\"odinger operators, spectral theory, strongly singular potentials}
\subjclass[2000]{Primary 34B20, 34L05; Secondary 34B24, 47A10}

\begin{abstract}
We develop Weyl--Titchmarsh theory for Schr\"odinger operators with strongly singular potentials such
as perturbed spherical Schr\"odinger operators (also known as Bessel operators). It is known that in such
situations one can still define a corresponding singular Weyl $m$-function and it was recently shown that
there is also an associated spectral transformation. Here we will give a general criterion when the
singular Weyl function can be analytically extended to the upper half plane. We will derive an integral
representation for this singular Weyl function and give a criterion when it is a generalized Nevanlinna
function. Moreover, we will show how essential supports for the Lebesgue decomposition of the
spectral measure can be obtained from the boundary behavior of the singular Weyl function.
Finally, we will prove a local Borg--Marchenko type uniqueness result.
Our criteria will in particular cover the aforementioned case
of perturbed spherical Schr\"odinger operators.
\end{abstract}

\maketitle

\section{Introduction}
\label{sec:int}

The present paper is concerned with spectral theory for one-dimensional Schr\"o\-dinger operators
\be
H = -\frac{d^2}{dx^2} + q(x), \qquad x\in (a,b),
\ee
on the Hilbert space $L^2(a,b)$ with a real-valued potential $q\in L^1_{\mathrm{loc}}(a,b)$.
If we assume one endpoint to be regular it is well known that one can associate a single
function $m(z)$, the Weyl--Titchmarsh (or Weyl) $m$-function, with $H$, such that $m(z)$ contains all the information about
$H$. In particular, a corresponding spectral measure $d\mu$ such that $H$ is unitarily equivalent to
multiplication by the identity function in $L^2(\R,d\mu)$ can be obtained by considering the vague limit of
$\pi^{-1} \im(m(\lam+\I\eps)) d\lam$ as $\eps\downarrow 0$. Moreover, by the celebrated Borg--Marchenko
theorem $H$ is uniquely determined by $m(z)$.

If neither endpoint is regular, similar results hold by considering
some point $c\in(a,b)$ at the price that one now has to deal with two by two Weyl matrices. This is clearly
unavoidable if the spectral multiplicity is two (for necessary and sufficient conditions for simplicity of the
spectrum we refer to \cite{gi}, see also the references therein), but it naturally raises the question if there are
cases where a single function is sufficient and a {\em singular} Weyl $m$-function $M(z)$ can be introduced.
One such case is if one endpoint is in the limit circle case which was first observed by Fulton \cite{ful77} and
later on also by Bennewitz and Everitt \cite{beev}.
Moreover, another example, the perturbed spherical Schr\"odinger (or Bessel) operator, got
much attention and singular $m$-functions were introduced by Gesztesy and Zinchenko \cite{gz}
 and Fulton and Langer \cite{ful08}, \cite{fl} recently (see also \cite{kl}). In this respect, note that the literature
on spectral theory for these operators is extensive, see, for example, \cite{ahm}, \cite{car}, \cite{gr}, \cite{kst}, \cite{se} and
the references therein.

The key ingredient for defining a Weyl $m$-function is an entire system of linearly independent
solutions $\phi(z,x)$ and $\theta(z,x)$ of the underlying differential equation $H u = z u$, $z\in\C$,
normalized such that the Wronskian $W(\theta(z),\phi(z))$ equals $1$. To make the connection with $H$,
one solution, say $\phi(z,x)$, has to be chosen such that it lies in the domain of $H$ near the endpoint $a$
(i.e., $\phi(z,.)$ is square integrable near $a$ and satisfies the boundary condition at $a$ if $H$ is limit circle at $a$).
If $a$ is regular, this can be easily done by specifying the initial conditions at $x=a$. Once $\phi(z,x)$ and
$\theta(z,x)$ are given, the Weyl $m$-function $M(z)$ can be defined by the requirement that the solution
\be
\psi(z,x) = \theta(z,x) + M(z) \phi(z,x)
\ee
is in the domain of $H$ near $b$.

While this prescription sounds straightforward, it has turned out to be rather subtle! The definition of
$\phi(z,x)$ is unproblematic and, as demonstrated in Gesztesy and Zinchenko \cite{gz}, a necessary
and sufficient condition for the existence of $\phi(z,x)$ is the assumption that $H$ restricted to $(a,c) \subset (a,b)$
has purely discrete spectrum. This is also confirmed by the fact that in the special case of spherical Schr\"odinger
operators the solution $\phi(z,x)$ (together with detailed growth estimates) can be obtained by the usual iteration schemes
(see, e.g.\ \cite[Sect.~14]{fad}, \cite{gr}, \cite{kst}, \cite[Sect.~12.2]{new}, \cite{se}).
On the other hand, a similar construction for the second (singular) solution $\theta(z,x)$ is not known
to the best of our knowledge! More precisely, a construction was given in \cite{gr} and later on used and generalized by
several authors, but unfortunately this construction is wrong (cf.\ \cite[Rem.~2.7]{kst} for a counterexample). Consequently,
this creates a gap in the proofs of all papers relying on this construction (for \cite{gr} and \cite{se} this has been fixed in \cite{gr2}).
The only case of spherical Schr\"odinger operators, where existence of $\theta(z,x)$ is known, is when the potential is analytic
of Fuchs type near $a=0$. In this case a second entire solution comes for free from the Frobenius method. Naturally, this special
case has attracted much attention recently \cite{ful08}, \cite{fl}, \cite{kl}.

This clearly raises the question when such a second solution $\theta(z,x)$ exists or (essentially) equivalent,
when the singular Weyl $m$-function is analytic in the entire upper (and hence lower) half plane. This question was
circumvented in \cite{gz} by observing that for the definition of an associated spectral measure,
all one needs is a definition of $\theta(z,x)$ for $z$ in a vicinity of the real line. Such a $\theta(z,x)$ is easy to
obtain and existence of an associated spectral transformation was established in \cite{gz}. However, analyticity
of the Weyl $m$-function in the upper half plane is not only interesting from an academic point of view,
but also from a practical perspective! In fact, the Borg--Marchenko uniqueness theorem involves the asymptotic
expansion of the Weyl $m$-function as $|z| \to \infty$ along nonreal rays. Hence analyticity in the upper
half plane is a vital prerequisite for such kind of results.

It is the purpose of the present paper to give a positive answer to the above raised questions:
As our first result we will show that a second (singular) entire solution $\theta(z,x)$ always exists in Section~\ref{sec:swm}.
This will be done using an explicit construction based on the Mittag--Leffler theorem. Next we will
reproduce the main result concerning existence of a spectral transformation from \cite{gz} in Section~\ref{sec:st}.
In fact, we will provide some generalizations and a simplified
proof. Moreover, we will show how essential supports for the Lebesgue decomposition of the
spectral measure can be obtained from the boundary behavior of the singular Weyl function (cf.\ Corollary~\ref{corsup}).

In Section~\ref{sec:ir} we will derive an integral representation for singular Weyl functions which generalizes the usual
Herglotz--Nevanlinna representation (cf.\ Theorem~\ref{IntR}). Moreover, with its help we can show that there is always a choice of
$\phi(z,x)$ and $\theta(z,x)$ such that the singular Weyl function is a Herglotz--Nevanlinna function. In fact, in the special case
of perturbed spherical Schr\"odinger operators quite some effort has been put into showing that the singular Weyl function is a
generalized Nevanlinna function \cite{DSh_00}, \cite{fl}, \cite{kl}, but so far only the case of analytic perturbations
could be treated. Our approach is both more general and much simpler. In fact, as a simple special case, it implies that when
$\phi(z,x)$ is normalized such that it asymptotically matches the Bessel function of the first kind at the singular endpoint, then
there is a corresponding choice of $\theta(z,x)$ such that the singular Weyl function is in the generalized Nevanlinna class $N_\kappa^\infty$
(cf.\ Theorem~\ref{thm:nkap}, Theorem~\ref{thmpbgn} and Appendix~\ref{app:gnf} for a definition of this class).

In Section~\ref{sec:ex} the explicitly solvable example of the (unperturbed) Bessel operator will be used to
illustrate some of our findings.

Then we will investigate exponential growth properties of solutions as a preparation
for our next section. Existence of solutions with the necessary growth properties will require
an additional (mild) hypothesis on the Dirichlet and Neumann eigenvalues of the operator
restricted to $(a,c) \subset (a,b)$. Our approach will be nonconstructive and based on a version of the
Corona Theorem for rings of entire functions due to H\"ormander \cite{hor}.
After these preparations we will prove a Borg--Marchenko-type uniqueness result in its local version
due to Gesztesy and Simon \cite{gs}, \cite{si} (our proof is modeled after the simple proof from Bennewitz \cite{ben}).

Finally we will show in Section~\ref{sec:ap} how our results can be applied to perturbed spherical Schr\"odinger operators.

Appendix~\ref{app:lc} shows how our considerations simplify in the special case where the endpoint is limit circle and
shows that the singular $m$-function is a Herglotz--Nevanlinna function in this case. Appendix~\ref{app:ex} investigates another
explicitly solvable example. Appendix~\ref{app:gnf} collects some facts about generalized Nevanlinna functions.

In a follow-up paper \cite{kt} we will further investigate the special example of perturbed spherical Schr\"odinger operators
\be
H = -\frac{d^2}{dx^2} + \frac{l(l+1)}{x^2} + q(x), \qquad x\in (0,\infty).
\ee
There we will precisely identify the value of $\kappa$ from the generalized Nevanlinna class $N_\kappa^\infty$ to which
the singular $m$-function belongs and connect our results with the theory of super singular perturbations \cite{be}, \cite{DHdS},
\cite{DKSh_05}, \cite{DLSZ}, \cite{Ku}, \cite{sho}. Let us mention that this connection was first observed by Djiksma and Shondin \cite{DSh_00}
and Kurasov and Luger \cite{kl} in the particular cases of unperturbed Bessel operators and the Bessel operators perturbed by a Coulomb term, respectively.
In the case where the perturbation is analytic similar results have been obtained in \cite{fl} with the help of the Frobenius method as pointed out earlier.

There are also close connections with commutation methods (Crum--Darboux transformations) originating in the work of Krein \cite{kr57}.
This connection will be explored in a follow-up paper \cite{kst2}.

\section{The singular Weyl $m$-function}
\label{sec:swm}

To set the stage, we will consider one-dimensional Schr\"odinger operators on $L^2(a,b)$
with $-\infty \le a<b \le \infty$ of the form
\begin{equation} \label{stli}
\tau = - \frac{d^2}{dx^2} + q,
\end{equation}
where the potential $q$ is real-valued satisfying
\begin{equation}
q \in L^1_{loc}(a,b).
\end{equation}
We will use $\tau$ to denote the formal differential expression and $H$ to denote a corresponding
self-adjoint operator given by $\tau$ with separated boundary conditions at $a$ and/or $b$.

If $a$ (resp.\ $b$) is finite and $q$ is in addition integrable
near $a$ (resp.\ $b$), we will say $a$ (resp.\ $b$) is a \textit{regular}
endpoint.  We will say $\tau$, respectively $H$, is \textit{regular} if
both $a$ and $b$ are regular.

We will choose a point $c\in(a,b)$ and also consider the operators $H^D_{(a,c)}$ and $H^D_{(c,b)}$
which are obtained by restricting $H$ to $(a,c)$ and $(c,b)$ with a Dirichlet boundary condition at
$c$, respectively. The corresponding operators with a Neumann boundary condition will be
denoted by $H^N_{(a,c)}$ and $H^N_{(c,b)}$.

Moreover, let $c(z,x)$ and $s(z,x)$ be the solutions of $\tau u = z\, u$ corresponding
to the initial conditions $c(z,c)=1$, $c'(z,c)=0$ and $s(z,c)=0$, $s'(z,c)=1$.

Then we can define the Weyl solutions
\begin{align}\nn
u_-(z,x) &= c(z,x) - m_-(z) s(z,x), \qquad z\in\C\setminus\sig(H^D_{(a,c)}),\\\label{defupm}
u_+(z,x) &= c(z,x) + m_+(z) s(z,x), \qquad z\in\C\setminus\sig(H^D_{(c,b)}),
\end{align}
where $m_\pm(z)$ are the Weyl $m$-functions corresponding to the base point $c$ and associated with
$H^D_{(a,c)}$, $H^D_{(c,b)}$, respectively.
For further background we refer to \cite[Chap.~9]{tschroe} or \cite{wdln}.

To define an analogous singular Weyl $m$-function at the, in general singular, endpoint $a$ we will
first need the analog of the system of solutions $c(z,x)$ and $s(z,x)$. Hence our first goal is
to find a system of entire solutions $\theta(z,x)$ and $\phi(z,x)$ such that $\phi(z,x)$ lies in the domain of
$H$ near $a$ and such that the Wronskian $W(\theta(z),\phi(z))=1$. To this end we start with a
hypothesis which will turn out necessary and sufficient for such a system of solutions to exist.

\begin{hypothesis}\label{hyp:gen}
Suppose that the spectrum of $H^D_{(a,c)}$ is purely discrete for one (and hence for all)
$c\in (a,b)$.
\end{hypothesis}

Note that this hypothesis is for example satisfied if $q(x) \to +\infty$ as $x\to a$ (cf.\ Problem~9.7 in \cite{tschroe}).
Further examples will be given in Sections~\ref{sec:ex} and \ref{sec:ap} (cf.\ also Example~3.13 in \cite{gz}).

\begin{lemma}\label{lem:phi}
Hypothesis~\ref{hyp:gen} is necessary and sufficient to have a (nontrivial) solution $\phi(z,x)$ of $\tau u = z u$
which is in the domain of $H$ near $a$ and which is entire with respect to $z$.
Moreover, one can choose $\phi(z,x)$ such that
\be\label{phiab}
\phi(z,x) = \alpha(z) c(z,x) + \beta(z) s(z,x),
\ee
where $\alpha(z)$ and $\beta(z)$ are real entire functions with no common zeros.
\end{lemma}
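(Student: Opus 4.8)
The plan is to establish necessity and sufficiency separately, and in the sufficient direction to build $\phi(z,x)$ by interpolating a suitable family of eigenfunctions via the Mittag--Leffler theorem.

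For necessity, suppose such an entire solution $\phi(z,x)$, square integrable near $a$ and satisfying the boundary condition at $a$ (if any), exists. Fix $c\in(a,b)$ and consider $H^D_{(a,c)}$. If $z$ is an eigenvalue of $H^D_{(a,c)}$ with eigenfunction $\psi$, then $\psi$ solves $\tau u = z u$ on $(a,c)$, lies in the domain of $H$ near $a$, and vanishes at $c$. Since solutions in the domain of $H$ near $a$ form (at most) a one-dimensional space, $\psi$ must be a multiple of $\phi(z,\cdot)$; hence $\phi(z,c)=0$. Conversely, every zero $z$ of the entire function $z\mapsto\phi(z,c)$ produces an eigenfunction $\phi(z,\cdot)|_{(a,c)}$ of $H^D_{(a,c)}$. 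Thus $\sigma(H^D_{(a,c)})$ coincides with the zero set of the entire function $\phi(\cdot,c)$; since $\phi$ is nontrivial this zero set has no finite accumulation point and is therefore a discrete set of real numbers, and a Weyl-alternative argument shows $H^D_{(a,c)}$ has compact resolvent, i.e.\ purely discrete spectrum. (The ``one and hence all $c$'' clause follows because discreteness of $H^D_{(a,c)}$ near $a$ is an endpoint property, independent of the regular cutoff $c$, as already remarked before the statement.)

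For sufficiency, assume Hypothesis~\ref{hyp:gen}. Fix $c$ and list the eigenvalues $\{\lambda_n\}$ of $H^D_{(a,c)}$, a discrete real sequence. For each $n$ let $\phi_n(x)$ be the (real) normalized eigenfunction; it solves $\tau u=\lambda_n u$ near $a$, lies in the domain of $H$ near $a$, and satisfies $\phi_n(c)=0$, $\phi_n'(c)\neq0$. The strategy is to produce an entire function $\alpha(z)$ and an entire function $\beta(z)$, not simultaneously vanishing, with $\beta(\lambda_n)/\alpha$-type prescribed behavior, such that
\begin{equation}\label{eq:plan-match}
\alpha(\lambda_n)\, c(\lambda_n,x) + \beta(\lambda_n)\, s(\lambda_n,x) \in \mathrm{span}\{\phi_n\}
\end{equation}
for every $n$; then defining $\phi(z,x):=\alpha(z)c(z,x)+\beta(z)s(z,x)$ gives an entire (in $z$) solution of $\tau u=zu$ that agrees with a multiple of $\phi_n$ at each $\lambda_n$. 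Concretely, $c(\lambda_n,\cdot)$ and $s(\lambda_n,\cdot)$ form a basis of the solution space at energy $\lambda_n$, and the subspace of solutions lying in the domain of $H$ near $a$ is spanned by $\phi_n$; writing $\phi_n = a_n c(\lambda_n,\cdot) + b_n s(\lambda_n,\cdot)$ with $(a_n,b_n)\neq(0,0)$, condition \eqref{eq:plan-match} becomes the interpolation requirement $\alpha(\lambda_n) : \beta(\lambda_n) = a_n : b_n$. Such $\alpha,\beta$ are constructed by the Mittag--Leffler / Weierstrass theorem: take an entire function $\alpha$ with simple zeros exactly at the $\lambda_n$ with $a_n=0$ (and no other zeros), an entire $\beta$ with simple zeros exactly at the $\lambda_n$ with $b_n=0$, and then correct $\beta$ by a further entire factor interpolating the required ratios at the remaining points; arranging $\alpha,\beta$ real on $\R$ is automatic since the data $\lambda_n,a_n,b_n$ are real. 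By construction $\alpha$ and $\beta$ have no common zero. It remains to check $\phi(z,\cdot)$ is square integrable near $a$ for \emph{all} $z$, not just at the $\lambda_n$: this is where the interpolation data must be controlled quantitatively. One chooses the interpolating functions of minimal growth (using the known density/growth of the Dirichlet eigenvalues, e.g.\ $\lambda_n\sim cn^2$) so that $\phi(z,x)$ grows no faster near $a$ than is compatible with the known asymptotics of $c(z,\cdot),s(z,\cdot)$; combined with the variation-of-constants representation and a Gronwall estimate this forces $\phi(z,\cdot)\in L^2$ near $a$ and satisfying the boundary condition for every $z$, since it does so on a set with an accumulation point and the relevant Wronskians depend analytically on $z$.

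The main obstacle is precisely this last point: interpolation at the eigenvalues only pins down $\phi(z,\cdot)$ at discrete energies, and one must argue that membership in the limit point/limit circle domain near $a$ \emph{propagates} to all $z\in\C$. The clean way is to show that the Wronskian $W_x(\phi(z,\cdot),u_a(z,\cdot))$ — where $u_a(z,\cdot)$ is a principal/Weyl solution at $a$ — is an entire function of $z$ vanishing at all $\lambda_n$, hence (after dividing out the entire function with those zeros and controlling growth) identically the right multiple, giving $\phi(z,\cdot)$ in the domain near $a$ for every $z$. Controlling the growth of all the entire functions involved, so that these divisions are legitimate, is the technical heart of the argument and is exactly where Hypothesis~\ref{hyp:gen}, through the discreteness and known counting asymptotics of $\sigma(H^D_{(a,c)})$, enters decisively.
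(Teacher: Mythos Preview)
Your sufficiency argument misses the key idea and, as written, has a genuine gap. The paper's proof is a one-liner: since $H^D_{(a,c)}$ has discrete spectrum, the Weyl function $m_-(z)$ is meromorphic with simple poles at $\sigma(H^D_{(a,c)})$. The Weyl solution $u_-(z,x)=c(z,x)-m_-(z)s(z,x)$ is, by its very definition, in the domain of $H$ near $a$ for every $z\in\C\setminus\sigma(H^D_{(a,c)})$. So one simply picks (Weierstra{\ss}) a real entire $\alpha(z)$ with simple zeros exactly at the poles of $m_-$ and sets $\phi(z,x)=\alpha(z)u_-(z,x)$. No interpolation, no growth control, no propagation argument is needed: membership in the domain near $a$ holds for all $z$ from the start, because it is built into $u_-$.

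Your interpolation scheme, by contrast, only pins down $\phi(z,\cdot)$ at the discrete set $\{\lambda_n\}$, and you correctly identify that propagating domain-membership to all $z$ is the ``main obstacle''. But your proposed resolution is circular: the ``principal/Weyl solution $u_a(z,\cdot)$'' you want to take a Wronskian against is nothing other than $u_-(z,x)$, whose $z$-dependence is meromorphic (not entire), so $W_x(\phi(z),u_a(z))$ is not an entire function of $z$. Once you grant yourself a meromorphic $u_-$, the interpolation is superfluous. Moreover, the growth input you invoke ($\lambda_n\sim c\,n^2$, Gronwall estimates near $a$) is not available under Hypothesis~\ref{hyp:gen} alone; that is the content of the later Hypothesis~\ref{hyp:ev} and Section~6, not of this lemma.

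Your necessity argument is essentially fine, though more roundabout than the paper's: the paper simply observes $\alpha(z)=\phi(z,c)$ and $\beta(z)=\phi'(z,c)$ are entire, whence $m_-(z)=-\beta(z)/\alpha(z)$ is meromorphic, which is equivalent to Hypothesis~\ref{hyp:gen}.
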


\begin{proof}
Suppose Hypothesis~\ref{hyp:gen} holds. Then $m_-(z)$ is meromorphic with simple poles at the points in $\sig(H^D_{(a,c)})$
and by the Weierstra{\ss} product theorem (\cite[Thm.~II.10.1]{mar}) there is a (real) entire function $\alpha(z)$ which has simple
zeros (and no other zeros) at the poles of $m_-(z)$. Hence we can set $\phi(z,x)=\alpha(z) u_-(z,x)$.

Conversely, let $\phi(z,x) = \alpha(z) c(z,x) + \beta(z) s(z,x)$ be entire. Then $\alpha(z) = \phi(z,c)$ is also entire.
Moreover,
\[
\beta(z) = \frac{\phi(z,x) - \alpha(z) c(z,x)}{s(z,x)}
\]
is entire since the possible poles on the real line are removable (use that the left-hand side of this formula is
independent of $x$ and the possible poles on the right-hand side vary as $x$ varies).
Note that
\be\label{eqalbe}
\alpha(z) = \phi(z,c), \qquad \beta(z) = \phi'(z,c)
\ee
Finally, recalling \eqref{defupm}, we see that
\be
m_-(z) = -\frac{\phi'(z,c)}{\phi(z,c)}=-\frac{\beta(z)}{\alpha(z)}
\ee
is meromorphic and thus Hypothesis~\ref{hyp:gen} holds.
\end{proof}

\begin{corollary}\label{coruniqphi}
The function $\phi(z,x)$ is uniquely defined up to a real entire function without zeros. That is, if
$\ti{\phi}(z,x)$ is another real entire solution which is nontrivial for all $z\in\C$ and in the domain of $H$ near $a$,
we have
\be
\ti{\alpha}(z) = \E^{g(z)} \alpha(z), \qquad \ti{\beta}(z) = \E^{g(z)} \beta(z)
\ee
for some real entire function $g(z)$.
\end{corollary}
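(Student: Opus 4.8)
The plan is to exploit Lemma~\ref{lem:phi}, which characterizes $\phi(z,x)$ through the pair of real entire functions $(\alpha(z),\beta(z))$ with no common zeros satisfying $m_-(z) = -\beta(z)/\alpha(z)$. Since $\tilde\phi(z,x)$ is assumed to be another real entire nontrivial solution in the domain of $H$ near $a$, it too falls under the hypotheses of Lemma~\ref{lem:phi} and hence can be written as $\tilde\phi(z,x) = \tilde\alpha(z) c(z,x) + \tilde\beta(z) s(z,x)$ with $\tilde\alpha, \tilde\beta$ real entire, no common zeros, and $m_-(z) = -\tilde\beta(z)/\tilde\alpha(z)$.

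First I would observe that for each fixed $z$, both $\phi(z,\cdot)$ and $\tilde\phi(z,\cdot)$ lie in the (one-dimensional, by Hypothesis~\ref{hyp:gen} together with square-integrability near $a$) space of solutions that are in the domain of $H$ near $a$; hence they are proportional, i.e.\ there is a scalar $e(z)$ with $\tilde\phi(z,x) = e(z)\,\phi(z,x)$ for all $x$. Evaluating at $x=c$ using \eqref{eqalbe} gives $\tilde\alpha(z) = e(z)\alpha(z)$ and $\tilde\beta(z) = e(z)\beta(z)$. The function $e(z)$ is meromorphic: away from the common zeros of $\alpha,\beta$ (which is everywhere, since they have none) we have $e(z) = \tilde\alpha(z)/\alpha(z)$ wherever $\alpha(z)\neq 0$, and $e(z) = \tilde\beta(z)/\beta(z)$ wherever $\beta(z) \neq 0$; since $\alpha$ and $\beta$ have no common zero, one of these two expressions is finite at every point, so $e(z)$ is in fact entire. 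By the symmetric argument ($\phi = e^{-1}\tilde\phi$ and $\tilde\alpha,\tilde\beta$ have no common zeros) $1/e(z)$ is entire as well, so $e(z)$ is a nowhere-vanishing entire function. Moreover $e(z) = \tilde\alpha(z)/\alpha(z)$ is a ratio of real entire functions, hence real (i.e.\ $e(\bar z) = \overline{e(z)}$), so in particular $e$ is real-valued on the real axis and never zero there, whence it has constant sign; replacing $\phi$ by $-\phi$ if necessary we may assume $e(z) > 0$ on $\R$.

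It then remains to write $e(z) = \E^{g(z)}$ for a real entire $g$. Since $e$ is entire and nowhere zero, $e = \E^{h}$ for some entire $h$ (standard: $h(z) = \int_{z_0}^z e'(\zeta)/e(\zeta)\,d\zeta + \log e(z_0)$, the integral being path-independent because $e'/e$ is entire and $\C$ is simply connected). To see that $h$ can be chosen real entire, use that $e$ is real: $\E^{\overline{h(\bar z)}} = \overline{e(\bar z)} = e(z) = \E^{h(z)}$, so $\overline{h(\bar z)} - h(z) \in 2\pi\I\Z$ is a constant integer multiple of $2\pi\I$; absorbing this constant and using the normalization $e(x) > 0$ for real $x$ (so that we may pick $h(x)$ real for some real $x$) forces $\overline{h(\bar z)} = h(z)$, i.e.\ $g := h$ is real entire, and $\tilde\alpha = \E^{g}\alpha$, $\tilde\beta = \E^{g}\beta$ as claimed.

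The main obstacle is not any single deep step but rather the careful bookkeeping around the normalization: one must be slightly attentive to the sign of $e$ on the real axis (which is why the statement is only ``up to a real entire function without zeros,'' and $e$ could a priori be negative, handled by the freedom $\phi \mapsto -\phi$ hidden in ``without zeros''), and to the branch-of-logarithm argument showing that the reality of $e$ upgrades to reality of $g$. The essential content — that the quotient $e(z)$ is a nowhere-vanishing \emph{entire} function rather than merely meromorphic — rests entirely on the ``no common zeros'' clauses of Lemma~\ref{lem:phi} applied to both $\phi$ and $\tilde\phi$, so the proof is short once that lemma is invoked on both sides.
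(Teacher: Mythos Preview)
Your argument is correct and is precisely the natural expansion of what the paper leaves implicit: the paper states this corollary without proof, treating it as immediate from Lemma~\ref{lem:phi}. Your proof supplies the details --- one-dimensionality of the solution space near $a$, entireness of $e(z)$ from the no-common-zeros clause (which, as you use it, is equivalent to the hypothesis that $\tilde\phi$ is nontrivial for every $z$), and the passage from a zero-free real entire function to $\E^{g}$ --- and these are exactly the steps one would fill in.

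One small remark: your handling of the sign of $e$ on $\R$ is the right observation, but note that the freedom ``$\phi\mapsto -\phi$'' is not really available once $\phi$ and $\tilde\phi$ are both fixed; rather, the corollary as stated is literally correct only when $e>0$ on $\R$, and otherwise one has $\tilde\alpha = -\E^{g}\alpha$, $\tilde\beta = -\E^{g}\beta$. The paper's phrasing ``up to a real entire function without zeros'' in the first sentence is the accurate one, and the displayed formula with $\E^{g(z)}$ tacitly absorbs a possible sign. This is a wrinkle in the statement, not in your proof.
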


It remains to find a second solution:

\begin{lemma}\label{lem:theta}
Suppose Hypothesis~\ref{hyp:gen} holds. Then there is a second solution
\be\label{thetagd}
\theta(z,x) = \gamma(z) c(z,x) + \delta(z) s(z,x),
\ee
where $\gamma(z)$ and $\delta(z)$ are real entire functions with no common zeros,
such that
\be
W(\theta(z),\phi(z)) = \gamma(z) \beta(z) - \alpha(z) \delta(z) =1.
\ee
Here $W(u,v)= u(x)v'(x)-u'(x)v(x)$ is the usual Wronski determinant.
\end{lemma}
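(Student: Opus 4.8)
The plan is to reduce everything to a Bézout-type identity in the ring of entire functions: find real entire functions $\gamma(z)$ and $\delta(z)$ with
\[
\gamma(z)\beta(z) - \alpha(z)\delta(z) = 1 ,
\]
and then simply set $\theta(z,x) := \gamma(z)c(z,x) + \delta(z)s(z,x)$. Granting such $\gamma,\delta$, the function $\theta(z,x)$ is entire in $z$ because $c(z,x)$ and $s(z,x)$ are (they solve $\tau u = zu$ with $z$-independent data at $c$), it solves $\tau u = zu$ for each fixed $z$ as a constant-coefficient combination of $c(z,\cdot)$ and $s(z,\cdot)$, and bilinearity and antisymmetry of the Wronskian together with $W(c(z),s(z))=1$ give
\[
W(\theta(z),\phi(z)) = \bigl(\gamma(z)\beta(z) - \alpha(z)\delta(z)\bigr)\,W(c(z),s(z)) = 1 .
\]
Moreover $\gamma$ and $\delta$ then automatically have no common zeros: if $\gamma(z_1)=\delta(z_1)=0$ then evaluating the identity at $z_1$ would yield $0=1$.

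So it remains to solve $\gamma\beta - \alpha\delta = 1$. Here $\alpha$ is not identically zero (by Lemma~\ref{lem:phi} it vanishes exactly at the eigenvalues of $H^D_{(a,c)}$, up to a zero-free factor), so dividing formally by $\alpha\beta$ one is asking to write the meromorphic function $1/(\alpha\beta)$ as $F+G$, where $F$ is meromorphic with poles only at the zeros of $\alpha$, $G$ meromorphic with poles only at the zeros of $\beta$, and in addition $\alpha F$ and $\beta G$ are entire. Since $\alpha$ and $\beta$ have no common zeros, at each zero $z_0$ of $\alpha$ the function $1/(\alpha\beta)$ has a pole whose order equals the multiplicity of $z_0$. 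By the Mittag--Leffler theorem there is a meromorphic function $F$ whose poles are precisely the zeros of $\alpha$, with principal part at each $z_0$ equal to that of $1/(\alpha\beta)$; then $G := 1/(\alpha\beta) - F$ has poles only at the zeros of $\beta$. By construction $\gamma := \alpha F$ is holomorphic at every zero of $\alpha$ (the pole of $F$ is cancelled) and hence entire, likewise $\delta := -\beta G$ is entire, and $\gamma\beta - \alpha\delta = \alpha\beta(F+G) = 1$.

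Finally one arranges that $\gamma$ and $\delta$ be real. Since $\alpha$ and $\beta$ are real entire, the zero set of $\alpha$ is symmetric under complex conjugation and the prescribed principal parts at conjugate points are complex conjugates of one another; hence $F$ may be taken real, e.g.\ by replacing $F(z)$ with $\tfrac12\bigl(F(z)+\overline{F(\bar z)}\bigr)$, which has the same poles and principal parts. Then $\gamma = \alpha F$ and $\delta = -\beta G$ (with $G = 1/(\alpha\beta) - F$) are real entire, which is what we need.

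The only point requiring genuine care is the Mittag--Leffler step: checking that the zeros of $\alpha$ form a set without finite accumulation point (immediate, as $\alpha\not\equiv 0$) and that the prescribed principal parts can be chosen compatibly with the conjugation symmetry so that the resulting $\gamma,\delta$ come out real; the remaining verifications — entireness after cancellation of poles, the Wronskian computation, and the absence of common zeros — are routine bookkeeping.
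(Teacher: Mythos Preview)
Your argument is correct, with the small caveat that you should note $\beta\not\equiv 0$ so that $1/(\alpha\beta)$ makes sense; this holds because $\alpha$ vanishes at each Dirichlet eigenvalue of $H^D_{(a,c)}$ (and there is at least one, since a self-adjoint operator has nonempty spectrum), while $\alpha$ and $\beta$ share no zeros.

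Your route is genuinely different from the paper's. The paper writes down the explicit ansatz
\[
\gamma = \frac{\beta}{\alpha^2+\beta^2} - \eta\,\alpha, \qquad \delta = \frac{-\alpha}{\alpha^2+\beta^2} - \eta\,\beta,
\]
which satisfies $\gamma\beta-\alpha\delta=1$ for \emph{any} meromorphic $\eta$, and then invokes Mittag--Leffler to choose $\eta$ so as to cancel the poles at the (necessarily nonreal) zeros of $\alpha^2+\beta^2$. You instead treat the problem as a B\'ezout identity and perform a Mittag--Leffler partial-fractions splitting of $1/(\alpha\beta)$, setting $\gamma=\alpha F$, $\delta=-\beta G$. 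Both arguments rest on Mittag--Leffler, but applied to different data: the paper works with the zeros of $\alpha^2+\beta^2$ (which lie off the real axis, since $\alpha,\beta$ are real with no common zeros), whereas you work with the zeros of $\alpha$. Your approach is arguably more transparent and closer to the standard ``coprime entire functions generate the ring'' folklore; the paper's ansatz has the minor advantage that the denominator $\alpha^2+\beta^2$ is strictly positive on $\R$, which one might hope simplifies the realness step, although in the end the paper also resorts to the same symmetrization $\gamma\mapsto\tfrac12(\gamma(z)+\gamma(z^*)^*)$ that you use.
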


\begin{proof}
We will make the following ansatz
\[
\gamma(z) = \frac{\beta(z)}{\alpha(z)^2+\beta(z)^2} - \eta(z) \alpha(z), \qquad
\delta(z)= \frac{-\alpha(z)}{\alpha(z)^2+\beta(z)^2} - \eta(z) \beta(z),
\]
where $\eta(z)$ is a meromorphic function with poles at the zeros of $\alpha(z)^2+\beta(z)^2$ to
be determined next.

Denote by $\{ z_j\}$ the zeros of the entire function $\alpha(z)^2+\beta(z)^2$ and suppose that
$z_j$ is a zero of order $n_j\in\N$. Then at each $z_j$ we have
\[
\beta^{(k)}(z_j) = \sig_j \I \alpha^{(k)}(z_j), \qquad 0 \le k < n_j,
\]
with some $\sig_j\in \{\pm 1\}$. Now choose $\eta(z)$ such that the principal part of $\eta(z)$ near
$z_j$ matches the one from $\sig_j \I (\alpha(z)^2+\beta(z)^2)^{-1}$, that is,
\[
\frac{\sig_j \I}{\alpha(z)^2+\beta(z)^2} = \eta(z) + O(z-z_j)^0.
\]
Such a function exists by the Mittag--Leffler theorem \cite[Thm.~II.10.10]{mar}. Then one computes
\begin{align*}
\gamma(z) &= \frac{\beta(z)}{\alpha(z)^2+\beta(z)^2} - \eta(z) \alpha(z)\\
 &= \frac{\sig_j \I \alpha(z) + O(z-z_j)^{n_j}}{\alpha(z)^2+\beta(z)^2} -  \frac{\sig_j \I \alpha(z)}{\alpha(z)^2+\beta(z)^2} + O(z-z_j)^0
= O(z-z_j)^0
\end{align*}
and thus all poles of $\gamma(z)$ are removable. Similarly, all poles of $\delta(z)$ are removable
\begin{align*}
\delta(z) &= \frac{-\alpha(z)}{\alpha(z)^2+\beta(z)^2} - \eta(z) \beta(z)\\
 &= \frac{-\alpha(z)}{\alpha(z)^2+\beta(z)^2} -  \frac{(\sig_j \I)^2 \alpha(z) + O(z-z_j)^{n_j}}{\alpha(z)^2+\beta(z)^2} + O(z-z_j)^0
= O(z-z_j)^0.
\end{align*}
Finally, $W(\theta(z),\phi(z)) =1$ is immediate (at least away from the zeros) by our ansatz. If $\gamma(z)$ or $\delta(z)$ are
not real, then replace them by $\frac{1}{2}(\gamma(z) + \gamma(z^*)^*)$ and $\frac{1}{2}(\delta(z) + \delta(z^*)^*)$, respectively.
\end{proof}

Note again
\be
\gamma(z) = \theta(z,c), \qquad \delta(z) = \theta'(z,c).
\ee

\begin{corollary}\label{coruniqtheta}
Given $\phi(z,x)$ and $\theta(z,x)$, any other real entire solution $\ti{\theta}(z,x)$ satisfying $W(\ti{\theta}(z),\phi(z))=1$
is given by
\be
\ti{\theta}(z,x) = \theta(z,x) - f(z) \phi(z,x)
\ee
for some real entire function $f(z)$.
\end{corollary}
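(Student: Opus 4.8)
The plan is to use that $\theta(z,\cdot)$ and $\phi(z,\cdot)$ form a fundamental system of solutions of $\tau u = z u$ for every fixed $z\in\C$, since their Wronskian equals $1$ and is in particular nonzero. Hence any solution of $\tau u = z u$, and in particular $\ti{\theta}(z,\cdot)$, is a linear combination
\[
\ti{\theta}(z,x) = g(z)\,\theta(z,x) + h(z)\,\phi(z,x),
\]
where the coefficients $g(z)$, $h(z)$ depend only on $z$ (and are uniquely determined since the two solutions are linearly independent).

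First I would determine $g(z)$. Taking the Wronskian of this identity with $\phi(z,\cdot)$ and using bilinearity of $W$ together with $W(\phi(z),\phi(z))=0$ and $W(\theta(z),\phi(z))=1$ gives
\[
1 = W(\ti{\theta}(z),\phi(z)) = g(z)\,W(\theta(z),\phi(z)) = g(z),
\]
so $g(z)\equiv 1$, and therefore $\ti{\theta}(z,x) = \theta(z,x) + h(z)\,\phi(z,x)$. Putting $f(z) := -h(z)$ yields the asserted form, and it remains only to verify that $f$ is real and entire.

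For this I would represent $f$ itself by a Wronskian: from $\ti{\theta}(z,x) = \theta(z,x) - f(z)\phi(z,x)$ one computes $W(\ti{\theta}(z),\theta(z)) = f(z)\,W(\theta(z),\phi(z)) = f(z)$, and evaluating the left-hand side at the base point $x=c$, recalling $\theta(z,c)=\gamma(z)$, $\theta'(z,c)=\delta(z)$ and writing analogously $\ti{\gamma}(z):=\ti{\theta}(z,c)$, $\ti{\delta}(z):=\ti{\theta}'(z,c)$, we obtain
\[
f(z) = \ti{\gamma}(z)\,\delta(z) - \ti{\delta}(z)\,\gamma(z),
\]
which is a combination of real entire functions and hence itself real and entire. (Alternatively, one may argue directly from $\ti{\gamma}=\gamma-f\alpha$ and $\ti{\delta}=\delta-f\beta$: since $\alpha$ and $\beta$ have no common zeros by Lemma~\ref{lem:phi}, at every point of $\C$ at least one of the representations $f=(\gamma-\ti{\gamma})/\alpha$ or $f=(\delta-\ti{\delta})/\beta$ is regular, so the a priori meromorphic function $f$ has no poles.)

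I do not expect a serious obstacle; the only point that needs a moment's care is ruling out poles of $f$, which is handled either by the Wronskian identity above or by the no-common-zeros property from Lemma~\ref{lem:phi}.
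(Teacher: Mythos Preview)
Your argument is correct and is precisely the natural one: since $W(\theta(z),\phi(z))=1$, the pair $\theta(z,\cdot),\phi(z,\cdot)$ is a fundamental system, the Wronskian condition $W(\ti\theta(z),\phi(z))=1$ forces the $\theta$-coefficient to be $1$, and the representation $f(z)=W(\ti\theta(z),\theta(z))=\ti\gamma(z)\delta(z)-\ti\delta(z)\gamma(z)$ exhibits $f$ as a real entire function. The paper states this corollary without proof, so there is nothing further to compare; your write-up simply spells out the expected one-line argument (and the alternative via the no-common-zeros property of $\alpha,\beta$ is also fine). The one implicit point --- that $\ti\gamma(z)=\ti\theta(z,c)$ and $\ti\delta(z)=\ti\theta'(z,c)$ are themselves real entire --- is handled exactly as in the proof of Lemma~\ref{lem:phi}.
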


Given a system of real entire solutions $\phi(z,x)$ and $\theta(z,x)$ as in the above lemma we can define the
singular Weyl $m$-function
\be\label{defM}
M(z) = -\frac{W(\theta(z),u_+(z))}{W(\phi(z),u_+(z))} =
-\frac{\gamma(z) m_+(z) - \delta(z)}{\alpha(z) m_+(z) - \beta(z)}
\ee
such that the solution which is in the domain of $H$ near $b$ (cf.\ \eqref{defupm}) is given by
\be
u_+(z,x)= a(z) \big(\theta(z,x) + M(z) \phi(z,x)\big),
\ee
where $a(z)= - W(\phi(z),u_+(z)) = \beta(z) - m_+(z) \alpha(z)$.
By construction we obtain the following:

\begin{lemma}
The singular Weyl $m$-function $M(z)$ is analytic in $\C\backslash\R$ and satisfies $M(z)=M(z^*)^*$.
\end{lemma}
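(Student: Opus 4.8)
The plan is to read both assertions directly off the explicit representation \eqref{defM}. First I would observe that, by Lemmas~\ref{lem:phi} and \ref{lem:theta}, the functions $\alpha,\beta,\gamma,\delta$ are entire, while the Weyl $m$-function $m_+(z)$ associated with the self-adjoint operator $H^D_{(c,b)}$ is analytic on $\C\setminus\R$ (being Herglotz--Nevanlinna; cf.\ \cite[Chap.~9]{tschroe}). Hence the numerator $W(\theta(z),u_+(z))=\gamma(z)m_+(z)-\delta(z)$ and the denominator $W(\phi(z),u_+(z))=\beta(z)-\alpha(z)m_+(z)=-a(z)$ in \eqref{defM} are both analytic on $\C\setminus\R$, and it only remains to check that the denominator has no zeros there.

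For that step --- which I expect to be the only real obstacle --- suppose $W(\phi(z_0),u_+(z_0))=0$ for some $z_0\in\C\setminus\R$. Since $\phi(z_0,\cdot)$ is nontrivial (Lemma~\ref{lem:phi}) and $u_+(z_0,\cdot)$ is nontrivial (indeed $u_+(z_0,c)=1$), vanishing of their $x$-independent Wronskian forces $\phi(z_0,\cdot)$ and $u_+(z_0,\cdot)$ to be proportional. But $\phi(z_0,\cdot)$ lies in the domain of $H$ near $a$, whereas $u_+(z_0,\cdot)$, and hence $\phi(z_0,\cdot)$, is square integrable near $b$ and satisfies the boundary condition of $H$ there; therefore $\phi(z_0,\cdot)\in\dom(H)$ with $H\phi(z_0,\cdot)=z_0\,\phi(z_0,\cdot)$. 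This makes $z_0$ a nonreal eigenvalue of a self-adjoint operator --- impossible. So the denominator is zero-free on $\C\setminus\R$, and $M$ is analytic there.

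For the symmetry, I would use that $c(z,x)$ and $s(z,x)$ are real entire in $z$ (real initial data at $c$, real potential), so $c(z^*,x)^*=c(z,x)$ and $s(z^*,x)^*=s(z,x)$; combined with the realness of the boundary condition at $b$ this gives $m_+(z^*)^*=m_+(z)$ (cf.\ \cite[Chap.~9]{tschroe}). Since $\alpha,\beta,\gamma,\delta$ are real entire as well, \eqref{defM} then yields
\[
M(z^*)^*=-\frac{\gamma(z^*)^*\,m_+(z^*)^*-\delta(z^*)^*}{\alpha(z^*)^*\,m_+(z^*)^*-\beta(z^*)^*}=-\frac{\gamma(z)\,m_+(z)-\delta(z)}{\alpha(z)\,m_+(z)-\beta(z)}=M(z),
\]
which finishes the argument.
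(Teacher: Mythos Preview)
Your proof is correct and follows essentially the same route as the paper's: the building blocks are analytic on $\C\setminus\R$, and a zero of the denominator would produce a nonreal eigenvalue of the self-adjoint operator $H$, which is impossible. You spell out the symmetry $M(z^*)^*=M(z)$ more carefully than the paper (which leaves it implicit); note only the harmless sign slip $W(\phi(z),u_+(z))=\alpha(z)m_+(z)-\beta(z)$ rather than $\beta(z)-\alpha(z)m_+(z)$.
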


\begin{proof}
It is clear from \eqref{defM} that $M(z)$ is meromorphic in $\C\backslash\R$ since $m_+(z)$ is
analytic in $\C\backslash\R$ and all other functions are entire. Moreover, $W(\phi(z),u_+(z))$
cannot vanish for $z\in\C\backslash\R$ since every such zero would correspond to a complex
eigenvalue of $H$. Hence the claim follows.
\end{proof}

Rather than $u_+(z,x)$ we will use
\be\label{defpsi}
\psi(z,x)= \theta(z,x) + M(z) \phi(z,x).
\ee

\begin{remark}\label{rem:uniq}
By Corollary~\ref{coruniqphi} and Corollary~\ref{coruniqtheta} any system of real entire solutions
$\ti{\theta}(z,x)$ and $\ti{\phi}(z,x)$ satisfying $W(\ti{\theta}(z),\ti{\phi}(z))=1$ is related to those
constructed above via
\[
\ti{\theta}(z,x) = \E^{-g(z)} \theta(z,x) - f(z) \phi(z,x), \qquad
\ti{\phi}(z,x) = \E^{g(z)} \phi(z,x),
\]
where $g(z)$ and $f(z)$ are real entire functions. The
singular Weyl $m$-functions are related via
\[
\ti{M}(z) = \E^{-2g(z)} M(z) + \E^{-g(z)}f(z).
\]
In particular, the maximal domain of holomorphy or the structure of poles and singularities
do not change.
\end{remark}

\section{Spectral transformations}
\label{sec:st}

Denote by $f\in L^2_c(a,b)$ the subset of square integrable functions with compact support.
Set
\be\label{defhatf}
\hat{f}(z) = \int_a^b \phi(z,x) f(x) dx, \qquad f \in L^2_c(a,b)
\ee
and note that $\hat{f}(z)$ is entire. Note
\be
\hat{f}(z^*)^* = \int_a^b \phi(z,x) f(x)^* dx.
\ee
Moreover, recall that for every $f\in L^2(a,b)$ there is an associated spectral measure
$\mu_f$ whose Borel transform is given by
\be
m_f(z) := \spr{f}{(H-z)^{-1} f} = \int_\R \frac{d\mu_f(\lam)}{\lam-z}.
\ee
Here $\spr{f}{g} = \int_a^b f(x)^* g(x) dx$ denotes the scalar product in $L^2(a,b)$.

Moreover, recall
\be
(H-z)^{-1} f(x) = \int_a^b G(z,x,y) f(y) dy,
\ee
where
\be\label{defgf}
G(z,x,y) = \begin{cases} \phi(z,x) \psi(z,y), & y\ge x,\\
\phi(z,y) \psi(z,x), & y\le x,\end{cases}
\ee
is the Green function of $H$ (cf.\ \cite[Lem.~9.7]{tschroe}).

\begin{lemma}\label{lemmfM}
For every $f \in L^2_c(a,b)$ we have
\be
m_f(z) = E_f(z) + \hat{f}(z) \hat{f}(z^*)^* M(z),
\ee
where $\hat{f}(z)$ is given by \eqref{defhatf} and $E_f(z)$ is entire and satisfies $E_f(z^*)^* = E_f(z)$.
\end{lemma}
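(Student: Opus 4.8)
The plan is to compute $m_f(z) = \spr{f}{(H-z)^{-1}f}$ directly from the Green function formula \eqref{defgf} and then substitute $\psi(z,x) = \theta(z,x) + M(z)\phi(z,x)$ from \eqref{defpsi}. Writing
\[
m_f(z) = \int_a^b \int_a^b f(x)^* G(z,x,y) f(y)\, dy\, dx,
\]
I would split the double integral into the regions $y \ge x$ and $y \le x$ and use that $G$ is given by $\phi(z,x_\wedge)\psi(z,x_\vee)$ where $x_\wedge = \min(x,y)$, $x_\vee = \max(x,y)$. Inserting $\psi = \theta + M\phi$ the kernel becomes
\[
G(z,x,y) = \phi(z,x_\wedge)\theta(z,x_\vee) + M(z)\,\phi(z,x_\wedge)\phi(z,x_\vee).
\]
The second term factorizes completely: $\phi(z,x_\wedge)\phi(z,x_\vee) = \phi(z,x)\phi(z,y)$ since this product is symmetric in $x$ and $y$. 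Hence its contribution to $m_f(z)$ is
\[
M(z)\int_a^b \phi(z,x) f(x)^* dx \int_a^b \phi(z,y) f(y)\, dy = M(z)\,\hat f(z^*)^*\,\hat f(z),
\]
using that $\phi$ is real entire so $\int \phi(z,x) f(x)^* dx = \hat f(z^*)^*$ by the formula displayed right after \eqref{defhatf}. This is exactly the $\hat f(z)\hat f(z^*)^* M(z)$ term claimed.

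It remains to identify the first term $E_f(z) := \int\!\!\int f(x)^*\,\phi(z,x_\wedge)\theta(z,x_\vee) f(y)\,dy\,dx$ as entire with $E_f(z^*)^* = E_f(z)$. Entirety follows because $\phi(z,\cdot)$ and $\theta(z,\cdot)$ are entire in $z$, with locally uniform bounds on the compact $x$-support of $f$, so one may differentiate under the integral sign (Morera / dominated convergence); here one should remark that $E_f(z) = m_f(z) - M(z)\hat f(z)\hat f(z^*)^*$ is a priori only meromorphic in $\C\setminus\R$, but the integral representation just obtained shows it has no poles there and extends to an entire function. The reality relation $E_f(z^*)^* = E_f(z)$ follows by conjugating the integral and using that $\phi(z^*,x)^* = \phi(z,x)$, $\theta(z^*,x)^* = \theta(z,x)$ (both are real entire), together with the fact that complex conjugation swaps $f(x)^*$ and $f(y)$ but leaves the symmetric role of $x_\wedge, x_\vee$ intact after relabeling the integration variables.

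The only genuinely delicate point is the justification of entirety of $E_f$ at real $z$: the decomposition $G = \phi\,\theta + M\phi\,\phi$ separates a manifestly entire piece from the piece carrying the poles of $M$, so the apparent poles of $m_f$ on the real axis are entirely accounted for by $M(z)\hat f(z)\hat f(z^*)^*$ — one should note explicitly that at a pole $\lambda_0$ of $M$ the residue of $m_f$ is $(\operatorname{Res}_{\lambda_0} M)\,|\hat f(\lambda_0)|^2 \ge 0$ (consistent with $m_f$ being a Herglotz function), and subtracting it leaves something holomorphic there. With this observation the rest is a routine application of differentiation under the integral sign using the local boundedness of the entire solutions in $z$ uniform over $\operatorname{supp} f$.
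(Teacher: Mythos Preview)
Your proposal is correct and follows essentially the same approach as the paper: compute $\spr{f}{(H-z)^{-1}f}$ via the Green function, substitute $\psi=\theta+M\phi$, and split off the $M(z)\hat f(z)\hat f(z^*)^*$ term, leaving the explicit entire integral $E_f(z)=\int_a^b f(x)^*\big(\theta(z,x)\int_a^x\phi(z,y)f(y)\,dy+\phi(z,x)\int_x^b\theta(z,y)f(y)\,dy\big)\,dx$, which is exactly your $\iint f(x)^*\phi(z,x_\wedge)\theta(z,x_\vee)f(y)\,dy\,dx$. Your final paragraph is superfluous, though: once you have this integral representation for $E_f$ with entire integrands and compactly supported $f$, entirety is immediate and there is no ``delicate point'' at real $z$ to discuss.
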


\begin{proof}
A straightforward calculation using
\begin{align*}
&\spr{f}{(H-z)^{-1} f} = \int_a^b f(x)^* \int_a^b G(z,x,y) f(y) dy\, dx\\
& \quad = \int_a^b f(x)^* \left( \psi(z,x) \int_a^x \phi(z,y) f(y) dy + \phi(z,x) \int_x^b \psi(z,y) f(y) dy\right) dx
\end{align*}
and \eqref{defpsi} proves the claim with
\[
E_f(z) = \int_a^b f(x)^* \left( \theta(z,x) \int_a^x \phi(z,y) f(y) dy + \phi(z,x) \int_x^b \theta(z,y) f(y) dy\right) dx.
\]
\end{proof}

For the proof of our main result we will need the following form of the Stieltjes inversion formula:

\begin{lemma}[Stieltjes inversion formula]\label{lem:sif}
Suppose $m(z)$ is the Borel transform of a finite measure $d\mu$,
\be
m(z) = \int_\R \frac{d\mu(\lam)}{\lam-z},
\ee
then
\be
\lim_{\eps\downarrow 0} \frac{1}{\pi} \int_{\lam_0}^{\lam_1} F(\lam) \im\big(m(\lam+\I\eps)\big) d\lam =
\int_{\lam_0}^{\lam_1} F(\lam) d\mu(\lam)
\ee
for every $F\in C[\lam_0,\lam_1]$, where
\be
\int_{\lam_0}^{\lam_1} F\, d\mu = \frac{1}{2} \left(\int_{(\lam_0,\lam_1)} F\, d\mu +  \int_{[\lam_0,\lam_1]} F\, d\mu\right).
\ee
\end{lemma}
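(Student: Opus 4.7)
The plan is to substitute the Borel representation for $m(z)$, apply Fubini to swap the order of integration, and then analyze the resulting inner integral as a Poisson-type kernel. Concretely, for $z=\lam+\I\eps$ one has
\[
\im\bigl(m(\lam+\I\eps)\bigr) = \int_\R \frac{\eps}{(\lam'-\lam)^2+\eps^2}\, d\mu(\lam'),
\]
and since the integrand is nonnegative, Fubini gives
\[
\frac{1}{\pi}\int_{\lam_0}^{\lam_1} F(\lam) \im\bigl(m(\lam+\I\eps)\bigr)\, d\lam = \int_\R P_\eps(\lam')\, d\mu(\lam'),
\]
where
\[
P_\eps(\lam') := \frac{1}{\pi}\int_{\lam_0}^{\lam_1} \frac{\eps\, F(\lam)}{(\lam-\lam')^2+\eps^2}\, d\lam.
\]

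Next I would identify the pointwise limit of $P_\eps(\lam')$ as $\eps\downarrow 0$. Taking $F\equiv 1$ first, an explicit antiderivative gives
\[
\frac{1}{\pi}\int_{\lam_0}^{\lam_1} \frac{\eps}{(\lam-\lam')^2+\eps^2}\, d\lam = \frac{1}{\pi}\Bigl(\arctan\tfrac{\lam_1-\lam'}{\eps} - \arctan\tfrac{\lam_0-\lam'}{\eps}\Bigr),
\]
whose limit is $1$ on $(\lam_0,\lam_1)$, $0$ on $\R\setminus[\lam_0,\lam_1]$, and $\tfrac12$ at $\lam'\in\{\lam_0,\lam_1\}$. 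For general continuous $F$ on $[\lam_0,\lam_1]$, I would split $P_\eps(\lam') = F(\lam')\cdot(\text{the integral above}) + R_\eps(\lam')$ with
\[
R_\eps(\lam') = \frac{1}{\pi}\int_{\lam_0}^{\lam_1} \frac{\eps\,(F(\lam)-F(\lam'))}{(\lam-\lam')^2+\eps^2}\, d\lam,
\]
and show $R_\eps(\lam')\to 0$ for every $\lam'\in[\lam_0,\lam_1]$ by the standard approximate-identity argument: given $\eta>0$, use uniform continuity of $F$ to pick $\delta$ with $|F(\lam)-F(\lam')|<\eta$ on $|\lam-\lam'|<\delta$; the contribution from $|\lam-\lam'|<\delta$ is bounded by $\eta$, while the contribution from $|\lam-\lam'|\ge\delta$ is $O(\eps/\delta)$. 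Thus $P_\eps(\lam')$ converges pointwise to the function $F(\lam')\chi(\lam')$, with $\chi=1$ on $(\lam_0,\lam_1)$, $\chi=\tfrac12$ at the endpoints, and $\chi=0$ elsewhere.

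Finally, I would pass the limit under the outer integral by dominated convergence. Since $\tfrac{1}{\pi}\int_\R \tfrac{\eps}{(\lam-\lam')^2+\eps^2}\, d\lam = 1$, we have the uniform bound $|P_\eps(\lam')|\le\|F\|_\infty$, which is $\mu$-integrable because $\mu$ is finite. Therefore
\[
\lim_{\eps\downarrow 0}\int_\R P_\eps(\lam')\, d\mu(\lam') = \int_\R F(\lam')\chi(\lam')\, d\mu(\lam'),
\]
and the right-hand side is exactly the symmetrized integral $\tfrac12\bigl(\int_{(\lam_0,\lam_1)} F\, d\mu + \int_{[\lam_0,\lam_1]} F\, d\mu\bigr)$ appearing in the statement. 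The only subtle step is the pointwise convergence at the endpoints $\lam'=\lam_0,\lam_1$, where only half of the Poisson mass sits over $[\lam_0,\lam_1]$; this is precisely the source of the factor $\tfrac12$.
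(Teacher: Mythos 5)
Your proof is correct and is the standard Poisson-kernel argument. The paper itself gives no proof of this lemma: it is simply recorded as a recalled form of the Stieltjes inversion formula (cf.\ the Stieltjes--Liv\v{s}i\'{c} inversion formula of Kac--Krein cited in the text), so there is no proof in the paper to compare your argument against.

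Two minor points worth tightening. First, your justification for applying Fubini (``since the integrand is nonnegative'') is not quite right, because $F$ may change sign; what you actually need is absolute integrability of $(\lam,\lam')\mapsto F(\lam)\,\eps/((\lam'-\lam)^2+\eps^2)$ with respect to $d\lam\times d\mu$, which holds trivially since $|F|$ is bounded, the kernel is bounded by $1/\eps$, $[\lam_0,\lam_1]$ has finite Lebesgue measure, and $\mu$ is finite (alternatively, split $F=F_+-F_-$). Second, the decomposition $P_\eps(\lam')=F(\lam')\cdot(\text{arctan term})+R_\eps(\lam')$ only makes sense for $\lam'\in[\lam_0,\lam_1]$ where $F(\lam')$ is defined; for $\lam'\notin[\lam_0,\lam_1]$ you should observe directly that $P_\eps(\lam')\to0$, because the kernel is $O(\eps)$ uniformly at positive distance from the interval. (Also, the tail estimate in your approximate-identity step is $O(\eps/\delta^2)$ rather than $O(\eps/\delta)$, though this is harmless.) None of these affect the validity of the argument.
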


We also recall that we have
\be\label{bbhf}
\eps |m(\lam+\I\eps)| \le \mu(\R) \qquad\text{and} \qquad \lim_{\eps\downarrow 0} (-\I\eps)\, m(\lam+\I\eps) =\mu(\{\lam\}).
\ee

Upon choosing $f(x)= \chi_{[c,d]}(x) \phi(\lam_0,x)$ for some fixed $\lam_0\in\R$
we have $\hat{f}(\lam_0)>0$ and obtain
\be\label{eqMEm}
M(z) = \frac{-E_f(z) + \spr{f}{(H-z)^{-1} f}}{\hat{f}(z)^2}
\ee
for $z$ in a vicinity of $\lam_0$. Thus $M(z)$ shares many properties of the Herglotz--Nevanlinna function
$\spr{f}{(H-z)^{-1} f}$. In particular, we can use this to associate a measure with $M(z)$ by virtue of
the Stieltjes--Liv\v{s}i\'{c} inversion formula (\cite[Lem.~2.1]{kk}).

\begin{lemma}\label{lemrho}
There is a unique Borel measure $d\rho$ defined via
\be\label{defrho}
\frac{1}{2} \left( \rho\big((\lam_0,\lam_1)\big) + \rho\big([\lam_0,\lam_1]\big) \right)=
\lim_{\eps\downarrow 0} \frac{1}{\pi} \int_{\lam_0}^{\lam_1} \im\big(M(\lam+\I\eps)\big) d\lam
\ee
such that
\be\label{rhomuf}
d\mu_f = |\hat{f}|^2 d\rho, \qquad f\in L^2_c(a,b),
\ee
where $d\mu_f$ is the spectral measure of $f$ associated with the self-adjoint operator $H$.
\end{lemma}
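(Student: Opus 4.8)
The plan is to localize the problem using the representation \eqref{eqMEm}. Fix $\lam_0\in\R$ and take $f(x)=\chi_{[c,d]}(x)\phi(\lam_0,x)$ as above, so that $\hat f$ is entire and real-valued on $\R$ with $\hat f(\lam_0)>0$; hence $\hat f>0$ on some open interval $I=I_{\lam_0}\ni\lam_0$, and there $h(z):=\hat f(z)^{-2}$ is analytic, zero-free and real-analytic on $I$. On $I$ equation \eqref{eqMEm} reads $M(z)=h(z)m_f(z)-h(z)E_f(z)$, where $m_f$ is the Borel transform of the \emph{finite} measure $\mu_f$ (note $\mu_f(\R)=\|f\|^2$) and $E_f$, hence also $hE_f$, is analytic and real on $I$. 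Since $h$ and $hE_f$ are real-analytic, $\re h(\lam+\I\eps)=h(\lam)+O(\eps^2)$, $\im h(\lam+\I\eps)=O(\eps)$, and $\im(hE_f)(\lam+\I\eps)=O(\eps)$, all uniformly on compact subintervals of $I$.

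First I would prove a local Stieltjes--Livšić formula for $M$: for every $[\lam_0',\lam_1']\subset I$ and every continuous $F$ on it,
\[
\lim_{\eps\downarrow 0}\frac1\pi\int_{\lam_0'}^{\lam_1'}F(\lam)\,\im M(\lam+\I\eps)\,d\lam=\int_{\lam_0'}^{\lam_1'}F(\lam)\,\frac{d\mu_f(\lam)}{\hat f(\lam)^2},
\]
the right-hand side taken with the usual endpoint average. Indeed, split $\im M=\re(h)\im(m_f)+\im(h)\re(m_f)-\im(hE_f)$. The last term contributes $O(\eps)$. In the first term, $\re h(\lam+\I\eps)=h(\lam)+O(\eps^2)$ together with $\im m_f\ge 0$ and the boundedness of $\frac1\pi\int_{\lam_0'}^{\lam_1'}\im m_f(\lam+\I\eps)\,d\lam$ (which converges by Lemma~\ref{lem:sif}) reduces it, up to $O(\eps^2)$, to $\frac1\pi\int_{\lam_0'}^{\lam_1'}F(\lam)h(\lam)\im m_f(\lam+\I\eps)\,d\lam$, converging to $\int Fh\,d\mu_f$ by Lemma~\ref{lem:sif} with the continuous weight $Fh$. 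The cross term is handled by $|\im h|\le C\eps$ on $I$ and the elementary bound $\int_{\lam_0'}^{\lam_1'}|m_f(\lam+\I\eps)|\,d\lam\le C\,\mu_f(\R)\,\log(1/\eps)$ (Fubini and $\int_{-R}^{R}(t^2+\eps^2)^{-1/2}\,dt=O(\log(1/\eps))$), so it is $O(\eps\log(1/\eps))\to 0$. (Alternatively one may simply invoke \cite[Lem.~2.1]{kk}.)

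Next I would assemble the global measure. The restriction $\hat f^{-2}\,\mu_f$ to $I$ is a genuine \emph{nonnegative} (Radon) Borel measure, and the left-hand side of \eqref{defrho} does not depend on the auxiliary $f$; hence the local measures attached to two base points agree on the overlap of the corresponding intervals, and by the usual gluing argument there is a unique Borel measure $d\rho$ on $\R$ restricting to $\hat f^{-2}\,\mu_f$ on each $I_{\lam_0}$. Covering a given $[\lam_0,\lam_1]$ by finitely many such intervals and summing the local formulas (with a subordinate partition of unity, keeping track of the endpoint averages) shows that $d\rho$ satisfies \eqref{defrho} for all $\lam_0<\lam_1$, and, since a Borel measure is determined by its values on intervals, that it is unique. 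Note that $d\rho\ge0$ although $M$ need not be a Herglotz--Nevanlinna function, precisely because near each point it is a positive multiple of a spectral measure. The same covering argument upgrades the local formula to $\lim_{\eps\downarrow0}\frac1\pi\int_{\lam_0}^{\lam_1}F(\lam)\im M(\lam+\I\eps)\,d\lam=\int F\,d\rho$ (endpoint-averaged) for every continuous $F$ and every $[\lam_0,\lam_1]$.

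Finally, for an arbitrary $f\in L^2_c(a,b)$ I would use Lemma~\ref{lemmfM}, which gives $m_f(z)=E_f(z)+g(z)M(z)$ with $g(z):=\hat f(z)\hat f(z^*)^*$ entire, real on $\R$, and $g(\lam)=|\hat f(\lam)|^2$. Writing $\im(gM)=\re(g)\im M+\im(g)\re M$ and using $\re g(\lam+\I\eps)=|\hat f(\lam)|^2+O(\eps^2)$, $|\im g|\le C\eps$, $\im E_f(\lam+\I\eps)=O(\eps)$, together with the bounds $\int_{\lam_0}^{\lam_1}|M(\lam+\I\eps)|\,d\lam=O(\log(1/\eps))$ (from the local representations) and $\int_{\lam_0}^{\lam_1}|m_f(\lam+\I\eps)|\,d\lam=O(\log(1/\eps))$, I obtain
\[
\lim_{\eps\downarrow0}\frac1\pi\int_{\lam_0}^{\lam_1}\im m_f(\lam+\I\eps)\,d\lam=\lim_{\eps\downarrow0}\frac1\pi\int_{\lam_0}^{\lam_1}|\hat f(\lam)|^2\,\im M(\lam+\I\eps)\,d\lam .
\]
By Lemma~\ref{lem:sif} the left-hand side equals $\tfrac12(\mu_f((\lam_0,\lam_1))+\mu_f([\lam_0,\lam_1]))$, while by the global Stieltjes--Livšić formula above (with the continuous weight $|\hat f|^2$) the right-hand side equals $\tfrac12(\int_{(\lam_0,\lam_1)}|\hat f|^2\,d\rho+\int_{[\lam_0,\lam_1]}|\hat f|^2\,d\rho)$. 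Since this holds for all $\lam_0<\lam_1$, a standard argument yields $d\mu_f=|\hat f|^2\,d\rho$. The main obstacle is the first step: $M$ is not a Herglotz function and its ``denominator'' $\hat f(z)^2$ is genuinely complex off $\R$, so the standard inversion theory does not apply directly; the quantitative heart of the matter is the logarithmic $L^1$-bound on $|m_f(\lam+\I\eps)|$, after which everything is bookkeeping.
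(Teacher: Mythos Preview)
Your argument is correct and follows the same overall strategy as the paper: localize via \eqref{eqMEm}, reduce the Stieltjes--Liv\v{s}i\'{c} inversion for $M$ to the classical one for the Herglotz function $m_f$, glue the local measures, and then identify $d\mu_f$ with $|\hat f|^2\,d\rho$.

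The implementation differs in two places. First, for the cross term $\im(h)\re(m_f)$ you use the quantitative bound $\int_{\lam_0'}^{\lam_1'}|m_f(\lam+\I\eps)|\,d\lam=O(\log(1/\eps))$; the paper instead invokes dominated convergence directly, using only the pointwise bound $\eps|m_f(\lam+\I\eps)|\le\mu_f(\R)$ from \eqref{bbhf} together with $\im h=O(\eps)$, which is a bit slicker and avoids the logarithmic estimate altogether. Second, for arbitrary $f\in L^2_c(a,b)$ your route (apply the global weighted inversion formula with the continuous weight $|\hat f|^2$) is actually cleaner than the paper's: the paper first proves $\int F|\hat f|^2\,d\rho=\int F\,d\mu_f$ only for $F$ supported away from the real zeros of $\hat f$, and then removes this restriction by a separate argument showing $\mu_f(\{\lam_0\})=0$ at each such zero (via the eigenfunction identity $\mu_f(\{\lam_0\})=|\hat f(\lam_0)|^2/\|\phi(\lam_0)\|^2$). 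Your approach sidesteps this case distinction entirely.
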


\begin{proof}
Fix $\lam_0<\lam_1$ and $f\in L^2_c(a,b)$ such that $\hat{f}(\lam) \ne 0$ for $\lam\in[\lam_0,\lam_1]$. Then
\eqref{eqMEm} implies
\begin{align*}
\lim_{\eps\downarrow 0} \frac{1}{\pi} \int_{\lam_0}^{\lam_1} F(\lam) \im\big(M(\lam+\I\eps)\big) d\lam &=
\lim_{\eps\downarrow 0} \frac{1}{\pi} \int_{\lam_0}^{\lam_1} F(\lam) \im\left(\frac{m_f(\lam+\I\eps)}{\hat{f}(\lam+\I\eps) \hat{f}(\lam-\I\eps)^*}\right) d\lam\\
&= \lim_{\eps\downarrow 0} \frac{1}{\pi} \int_{\lam_0}^{\lam_1} \frac{F(\lam)}{|\hat{f}(\lam)|^2}\im\big(m_f(\lam+\I\eps)\big) d\lam\\
&= \int_{\lam_0}^{\lam_1} \frac{F(\lam)}{|\hat{f}(\lam)|^2} d\mu_f(\lam).
\end{align*}
Here the first step follows from dominated convergence using \eqref{bbhf} and the second from Lemma~\ref{lem:sif}.
Hence we can choose $F(\lam)=1$ to obtain \eqref{defrho} (split the interval into smaller subintervals
and use different $f$'s for different subintervals if necessary).

Moreover, replacing $F(\lam)$ by $|\hat{f}(\lam)|^2F(\lam)$ we obtain
\[
\int_\R F |\hat{f}|^2 d\rho = \int_\R F d\mu_f
\]
for every continuous function with compact support away from the real zeros of $\hat{f}(\lam)$ (note that the
zeros are discrete since $\hat{f}(z)$ is entire). Next observe that at every real zero $\lam_0$ of $\hat{f}$ we
have $\mu_f(\{\lam_0\})=0$ (if $\mu_f(\{\lam_0\})>0$, then $\lam_0$ must be an eigenvalue of $H$ with
corresponding eigenfunction $\phi(\lam_0,.)$ and $\mu_f(\{\lam_0\}) = |\spr{\phi(\lam_0)}{f}|^2 / \|\phi(\lam_0)\|^2
=|\hat{f}(\lam_0)|^2/ \|\phi(\lam_0)\|^2$ contradicting $\hat{f}(\lam_0)=0$) and hence we can remove this
restriction implying \eqref{rhomuf}.
\end{proof}

Now we come to our main result in this section which summarizes the main result from \cite{gz}.

\begin{theorem}[\cite{gz}]
Define
\be
\hat{f}(\lam) = \lim_{c\uparrow b} \int_a^c \phi(\lam,x) f(x) dx,
\ee
where the right-hand side is to be understood as a limit in $L^2(\R,d\rho)$. Then the map
\be
U: L^2(a,b) \to L^2(\R,d\rho), \qquad f \mapsto \hat{f},
\ee
is unitary and its inverse is given by
\be\label{Uinv}
f(x) = \lim_{r\to\infty} \int_{-r}^r \phi(\lam,x) \hat{f}(\lam) d\rho(\lam),
\ee
where again the right-hand side is to be understood as a limit in $L^2(a,b)$.
Moreover, $U$ maps $H$ to multiplication by $\lam$.
\end{theorem}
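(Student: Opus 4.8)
The plan is to establish the spectral transformation property in stages, building on the measure $d\rho$ constructed in Lemma~\ref{lemrho}. First I would work on the dense set $L^2_c(a,b)$ and show that $U$ is isometric there. For $f\in L^2_c(a,b)$ the function $\hat f(\lam)$ given by \eqref{defhatf} is entire, and by \eqref{rhomuf} we have $d\mu_f=|\hat f|^2\,d\rho$, so $\|f\|^2=\mu_f(\R)=\int_\R|\hat f(\lam)|^2\,d\rho(\lam)=\|\hat f\|^2_{L^2(\R,d\rho)}$. Polarization then gives $\spr{f}{g}=\int_\R\hat f(\lam)^*\hat g(\lam)\,d\rho(\lam)$ for all $f,g\in L^2_c(a,b)$ (using the identity $\mu_{f+g}-\mu_{f-g}+\I\mu_{f-\I g}-\I\mu_{f+\I g}=4\spr{f}{g}$ for the scalar measures). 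Since $L^2_c(a,b)$ is dense in $L^2(a,b)$, the map $f\mapsto\hat f$ extends to an isometry $U:L^2(a,b)\to L^2(\R,d\rho)$, and the limit definition $\hat f(\lam)=\lim_{c\uparrow b}\int_a^c\phi(\lam,x)f(x)\,dx$ in $L^2(\R,d\rho)$ is just the statement that this extension agrees with approximation by the compactly supported truncations $\chi_{(a,c)}f$.

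Next I would verify the intertwining property on the core. For $f\in L^2_c(a,b)$ that additionally lies in the domain of $H$ with $Hf\in L^2_c(a,b)$ (a dense set, e.g. smooth compactly supported functions), integration by parts against the solution $\phi(\lam,\cdot)$ of $\tau\phi=\lam\phi$ yields $\widehat{Hf}(\lam)=\int_a^b\phi(\lam,x)(\tau f)(x)\,dx=\lam\int_a^b\phi(\lam,x)f(x)\,dx=\lam\hat f(\lam)$, the boundary terms at $a$ and $b$ vanishing since $f$ has compact support. By standard arguments this extends to all $f\in\dom(H)$ and shows $UHf=\lam\,Uf$, i.e. $U$ maps $H$ to multiplication by the independent variable; in particular $UHU^{-1}$ is self-adjoint on its natural domain in $L^2(\R,d\rho)$.

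The remaining and genuinely substantive point is surjectivity of $U$, equivalently the inversion formula \eqref{Uinv}. The cleanest route is to show that $\ran(U)^\perp=\{0\}$. Suppose $F\in L^2(\R,d\rho)$ is orthogonal to $\ran(U)$; I would test against $f=G(z,\cdot,y)=(H-z)^{-1}\delta_y$-type resolvent kernels, or more safely against $(H-z)^{-1}f$ for $f\in L^2_c(a,b)$ and $z\in\C\setminus\R$. Using $\widehat{(H-z)^{-1}f}(\lam)=(\lam-z)^{-1}\hat f(\lam)$ (which follows from the intertwining property, as $U(H-z)^{-1}=(\lam-z)^{-1}U$), orthogonality gives $\int_\R\frac{\hat f(\lam)^*F(\lam)}{\lam-z}\,d\rho(\lam)=0$ for all such $f$ and all nonreal $z$. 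Since the $\hat f$ are dense enough in $L^2(\R,d\rho)$ (every polynomial times a fixed nonvanishing $\hat f$ can be approximated, by choosing $f=\chi_{(c,d)}\phi(\lam_0,\cdot)$ and its $H$-images), this forces the Borel transform of the measure $\overline{\hat f}\,F\,d\rho$ to vanish identically, hence $\overline{\hat f}\,F\,d\rho=0$, and since $\hat f\ne0$ $\rho$-a.e. (its zeros are discrete and carry no mass by the argument in Lemma~\ref{lemrho}) we get $F=0$ in $L^2(\R,d\rho)$. Therefore $U$ is onto, hence unitary, and the adjoint $U^*$ is computed by pairing $\spr{U^*g}{f}_{L^2(a,b)}=\spr{g}{\hat f}_{L^2(\R,d\rho)}=\int_\R g(\lam)\big(\int_a^b\phi(\lam,x)^*f(x)\,dx\big)^*d\rho(\lam)$, which by Fubini (justified on $L^2_c$ and then extended) gives $U^*g(x)=\int_\R\phi(\lam,x)g(\lam)\,d\rho(\lam)$, understood as the $L^2(a,b)$-limit of the truncated integrals in \eqref{Uinv}. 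The main obstacle is this density/surjectivity step: one must be careful that the functions $\hat f$ really separate points of $L^2(\R,d\rho)$ and that the resolvent computations are legitimate, which is where the structure of $d\rho$ via \eqref{eqMEm} and the resolvent formula \eqref{defgf} do the real work.
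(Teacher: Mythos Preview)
Your isometry argument is essentially the paper's, and your surjectivity argument via the Borel transform would work if the resolvent intertwining $U(H-z)^{-1}=(\lam-z)^{-1}U$ were in hand. The difficulty lies in the step where you pass from $\widehat{Hf}=\lam\hat f$ on $C_c^\infty(a,b)$ to all of $\dom(H)$. This extension is \emph{not} routine: it requires $C_c^\infty(a,b)$ to be a core for $H$, which fails whenever an endpoint is in the limit circle case (the closure of $H|_{C_c^\infty}$ is then the minimal operator $H_{\min}\subsetneq H$). Since the limit circle situation at $a$ is explicitly covered by Hypothesis~\ref{hyp:gen} and occurs for the perturbed Bessel operators with $l\in[-\tfrac12,\tfrac12)$, this is a genuine gap. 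Without the intertwining you cannot identify $U((H-z)^{-1}f)$ with $(\lam-z)^{-1}\hat f$; all you know from $\spr{\hat f}{U(H-z)^{-1}g}_\rho=\spr{\hat f}{(\lam-z)^{-1}\hat g}_\rho$ is that $U((H-z)^{-1}g)$ equals the orthogonal projection of $(\lam-z)^{-1}\hat g$ onto $\ol{\ran(U)}$, and testing this against an $F\perp\ran(U)$ yields no information --- the argument becomes circular.

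The paper avoids this by never invoking integration by parts or a core. From $d\mu_f=|\hat f|^2\,d\rho$ one gets $\spr{f}{F(H)g}=\spr{\hat f}{F\hat g}_\rho$ for \emph{every} bounded Borel function $F$, directly via the spectral theorem. Writing $h=F(H)g$ and replacing $F$ by $GF$ gives $\int_\R G\,\hat f^*\,(\hat h-F\hat g)\,d\rho=0$ for all bounded Borel $G$, hence $\hat f^*(\hat h-F\hat g)=0$ $\rho$-a.e.; varying $f$ then forces $\hat h=F\hat g$, i.e.\ $U F(H)=F\cdot U$. Surjectivity follows at once since $\ran(U)$ then contains $F\hat g$ for all such $F$, in particular all characteristic functions of intervals. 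The point is that the relation $d\mu_f=|\hat f|^2\,d\rho$ already encodes the full intertwining with the functional calculus of $H$, and exploiting it through arbitrary bounded $F$ and $G$ (rather than trying to handle the unbounded operator $H$ itself) bypasses any core issue. Your Borel-transform route to surjectivity is perfectly viable once you establish $\widehat{(H-z)^{-1}f}=(\lam-z)^{-1}\hat f$ this way --- it is essentially the paper's argument specialized to $F(\lam)=(\lam-z)^{-1}$ followed by Stieltjes inversion.
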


\begin{proof}

Equation \eqref{rhomuf} implies
\[
\|f\|^2 = \int_\R d\mu_f = \int_\R |\hat{f}|^2 d\rho = \|\hat{f}\|^2_\rho, \quad f\in L^2_c(a,b),
\]
and thus the unique extension of this map to $L^2(a,b)$ is isometric. Here we have used
$\|.\|_\rho$, $\spr{.}{.}_\rho$ to distinguish the norm, scalar product in $L^2(\R,d\rho)$ from the one in $L^2(a,b)$.
Moreover,
\[
\spr{f}{F(H) f} = \int_\R F d\mu_f = \int_\R F |\hat{f}|^2 d\rho = \spr{\hat{f}}{F \hat{f}}_\rho, \qquad
f\in L^2_c(a,b),
\]
for every bounded Borel function $F$. By polarization and approximation we even get
\[
\spr{f}{F(H) g}  = \spr{\hat{f}}{F \hat{g}}_\rho, \qquad f,g\in L^2(a,b),
\]
for every bounded Borel function $F$.

Now consider $f,g\in L^2(a,b)$ and two bounded Borel functions
$F,G$. Set $h= F(H) g$. Then the above equation (with $F \to G F$) implies
\[
\int_\R G \hat{f}^* (\hat{h} - F \hat{g}) d\rho =0, \qquad h = F(H) g,
\]
for every bounded Borel function $G$ and thus $\hat{f}(\lam)^* (\hat{h}(\lam) - F \hat{g}(\lam))=0$ for
$\rho$-a.e.\ $\lam$. Furthermore, since for every $\lam_0$ we can find an $f$ such that $\hat{f}(\lam_0)\ne 0$,
we even get $\hat{h} = F \hat{g}$. This shows that $\ran(U)$ contains, for example, all characteristic functions of
intervals and thus $\ran(U)= L^2(\R,d\rho)$.
\end{proof}

Moreover, the spectral types can be read off from the boundary behavior of the
singular Weyl function in the usual way.

\begin{corollary}\label{corsup}
The following sets
\begin{align} \nn
\Sigma_{ac} &= \{\lam | 0<\limsup_{\eps\downarrow 0} \im(M(\lam+\I\eps)) < \infty\},\\
\Sigma_s &= \{\lam | \limsup_{\eps\downarrow 0}\im(M(\lam+\I\eps)) = \infty\},\\ \nn
\Sigma_p &= \{\lam | \lim_{\eps\downarrow 0} \eps\im(M(\lam+\I\eps))>0 \},\\
\Sigma &= \Sigma_{ac} \cup \Sigma_s = \{\lam | 0<\limsup_{\eps\downarrow 0} \im(M(\lam+\I\eps))\}
\end{align}
are minimal supports for $\rho_{ac}$, $\rho_s$, $\rho_{pp}$, and $\rho$, respectively.
In fact, we could even restrict ourselves to values of $\lam$, where the $\limsup$ is a $\lim$ (finite or infinite).

Moreover, the spectrum of $H$ is given by the closure of $\Sigma$,
\be
\sig(H) = \ol{\Sigma},
\ee
the set of eigenvalues is given by
\be
\sig_p(H) = \Sigma_p,
\ee
and the absolutely continuous spectrum of $H$ is given by the essential closure
of $\Sigma_{ac}$,
\be
\sig(H_{ac}) = \ol{\Sigma}_{ac}^{ess}.
\ee
Recall $ \ol{\Omega}^{ess} = \{ \lam\in\R | |(\lam-\eps,\lam+\eps)\cap \Omega|>0
\mbox{ for all } \eps>0\}$, where $|\Omega|$ denotes the Lebesgue measure of a Borel set $\Omega$.
\end{corollary}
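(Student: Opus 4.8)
The plan is to reduce everything to the corresponding statements for ordinary Herglotz--Nevanlinna functions and their associated measures, which are classical (see, e.g., \cite[Sect.~3]{gz} or standard references on the boundary behavior of Borel transforms). The key bridge is the relation \eqref{eqMEm}, $M(z) = \big(-E_f(z) + m_f(z)\big)/\hat f(z)^2$, valid in a neighborhood of any fixed $\lam_0\in\R$ for a suitable compactly supported $f$ with $\hat f(\lam_0)\ne 0$, together with \eqref{rhomuf}, $d\mu_f = |\hat f|^2\, d\rho$.

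First I would fix $\lam_0\in\R$ and choose $f\in L^2_c(a,b)$ with $\hat f$ real entire and $\hat f(\lam_0)\ne 0$; then $\hat f(z)$ is bounded away from zero on a complex neighborhood of $\lam_0$. Since $E_f(z)$ is entire and real on $\R$, it contributes nothing to $\im$ in the limit $\eps\downarrow 0$: $\lim_{\eps\downarrow 0}\im E_f(\lam+\I\eps) = 0$ locally uniformly. Hence near $\lam_0$,
\[
\im\big(M(\lam+\I\eps)\big) = \frac{\im\big(m_f(\lam+\I\eps)\big)}{|\hat f(\lam)|^2} + o(1), \qquad \eps\downarrow 0,
\]
so $\limsup_{\eps\downarrow 0}\im(M(\lam+\I\eps))$ and $\limsup_{\eps\downarrow 0}\im(m_f(\lam+\I\eps))$ differ only by the strictly positive continuous factor $|\hat f(\lam)|^{-2}$; in particular one is zero, finite-and-positive, or $+\infty$ exactly when the other is, and similarly for the $\eps$-weighted limits governing $\Sigma_p$ (using the second relation in \eqref{bbhf}). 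Therefore, locally near $\lam_0$, the sets $\Sigma_{ac}$, $\Sigma_s$, $\Sigma_p$, $\Sigma$ defined via $M$ coincide with the analogous sets defined via $m_f$, which by the classical theory are minimal supports for $(\mu_f)_{ac}$, $(\mu_f)_s$, $(\mu_f)_{pp}$, $\mu_f$. Because $d\mu_f = |\hat f|^2 d\rho$ and $|\hat f|^2$ is bounded above and below by positive constants near $\lam_0$, multiplication by it does not change null sets, so these are also minimal supports for $\rho_{ac}$, $\rho_s$, $\rho_{pp}$, $\rho$ in that neighborhood. A countable cover of $\R$ by such neighborhoods, together with the remark that the zero set of $\hat f$ is discrete and carries no mass of $\rho_{pp}$ (the argument at the end of the proof of Lemma~\ref{lemrho}), upgrades this to a global statement; the additional claim that one may restrict to points where the $\limsup$ is actually a $\lim$ is again inherited from the Herglotz case.

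The remaining identifications follow from the general principle relating the spectrum of $H$ to its spectral measure once a spectral transformation is in place: by the previous theorem, $U$ is unitary and maps $H$ to multiplication by $\lam$ on $L^2(\R,d\rho)$, so $\sig(H) = \supp(\rho) = \overline{\Sigma}$, $\sig_p(H)$ equals the set of atoms of $\rho$, which is $\Sigma_p$, and $\sig(H_{ac})$ equals the essential support of $\rho_{ac}$, which is $\overline{\Sigma}_{ac}^{ess}$; here one also uses that $\rho$ and $\mu_f$ have the same local structure together with the fact that $H$ has simple spectrum so that $\rho$ is a genuine spectral measure for $H$. I expect the only real point requiring care to be the patching argument: one must make sure the auxiliary function $f$ (hence $\hat f$) can be chosen locally around each point while the conclusions about minimal supports, which are statements about null sets, glue consistently across overlaps; this is routine since ``minimal support'' is defined up to $\rho$-null sets and the countable union of null sets is null. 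No genuine obstacle is anticipated beyond bookkeeping, which is why the corollary is stated without a separate proof.
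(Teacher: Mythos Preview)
Your proposal is correct and follows essentially the same route as the paper's proof: localize to intervals where $\hat f$ is nonvanishing, use Lemma~\ref{lemmfM} to see that the boundary limits defining $\Sigma_{ac}$, $\Sigma_s$, $\Sigma_p$, $\Sigma$ are unchanged when $M$ is replaced by the Herglotz--Nevanlinna function $m_f$, use \eqref{rhomuf} to conclude that $\mu_f$ and $\rho$ are mutually absolutely continuous on that interval, and then invoke the classical results for Herglotz functions. (One small correction: the paper does include a short proof, so your closing remark that the corollary ``is stated without a separate proof'' is inaccurate.)
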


\begin{proof}
It suffices to show this result restricted to sufficiently small intervals $[\lam_0,\lam_1]$. Now choose
$f\in L^2_c(a,b)$ such that $\hat{f}(\lam) \ne 0$ for $\lam\in[\lam_0,\lam_1]$ as in the previous lemma.

Then, by Lemma~\ref{lemmfM}, the above sets (restricted to $[\lam_0,\lam_1]$) remain unchanged if we replace $M(z)$ by the Herglotz--Nevanlinna function
$m_f(z)$. Moreover, the measures $\mu_f$ and $\rho$ are mutually absolutely continuous on $[\lam_0,\lam_1]$ and
thus the claim follows from standard results (cf., e.g., \cite[Sect.~3.2]{tschroe}).
\end{proof}

In particular, note that
\be\label{defnc}
\lim_{z\to\lam} (\lam-z) M(z) = \left( \int_a^b \phi(\lam,x)^2 dx \right)^{-1} \ge 0
\ee
and minus the residue at an eigenvalue is given by the corresponding norming constant
as usual.

We conclude this section with one more simple but useful observation.

\begin{lemma}\label{lemUub}
Recall the Green function $G(z,x,y)$ of $H$ defined in \eqref{defgf}. Then
\be\label{UG}
(U G(z,x,.))(\lam) = \frac{\phi(\lam,x)}{\lam-z} \quad\text{and}\quad
(U \partial_x G(z,x,.))(\lam) = \frac{\phi'(\lam,x)}{\lam-z}
\ee
for every $x\in(a,b)$ and every $z\in\C\setminus\sig(H)$.
\end{lemma}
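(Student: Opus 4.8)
The idea is to apply the unitary map $U$ directly to the function $y \mapsto G(z,x,y)$ (for fixed $x$ and fixed $z \notin \sig(H)$) and compute the resulting element of $L^2(\R,d\rho)$. The key observation is that $G(z,x,\cdot) = (H-z)^{-1}\delta_x$ in a weak sense, but more usefully, for any $f \in L^2_c(a,b)$ we have the resolvent identity
\be
\spr{G(z,x,\cdot)}{f} = \overline{\left((H-z^*)^{-1}f\right)(x)},
\ee
which expresses the pairing in terms of a pointwise value of a resolvent applied to $f$. Since $U$ is unitary and intertwines $H$ with multiplication by $\lam$, the function $(UG(z,x,\cdot))(\lam)$ is characterized by the requirement that
\be
\int_\R (UG(z,x,\cdot))(\lam)^* \hat{f}(\lam)\, d\rho(\lam) = \spr{G(z,x,\cdot)}{f} = \overline{\left((H-z^*)^{-1}f\right)(x)}
\ee
for all $f \in L^2_c(a,b)$.

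First I would express the right-hand side using the spectral transform. Writing $(H-z^*)^{-1}f = U^{-1}\big((\lam-z^*)^{-1}\hat f(\lam)\big)$ and then applying the inversion formula \eqref{Uinv}, we get
\be
\left((H-z^*)^{-1}f\right)(x) = \int_\R \phi(\lam,x)\, \frac{\hat f(\lam)}{\lam-z^*}\, d\rho(\lam),
\ee
where $\phi(\lam,x)$ is real. Taking complex conjugates (and using that $\phi$, $\rho$ are real) yields
\be
\overline{\left((H-z^*)^{-1}f\right)(x)} = \int_\R \phi(\lam,x)\, \frac{\hat f(\lam)^*}{\lam-z^*}\, d\rho(\lam)
= \int_\R \overline{\left(\frac{\phi(\lam,x)}{\lam-z}\right)} \hat f(\lam)\, d\rho(\lam).
\ee
Wait — one must be slightly careful with which conjugate sits where; the clean way is to pair against $\hat g$ rather than conjugating, i.e. compute $\spr{f}{G(z,x,\cdot)} = \left((H-z)^{-1}f\right)(x)$ and match. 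In any case, comparing with the characterizing identity above and using that $\lam \mapsto \hat f(\lam)$ ranges over a dense set in $L^2(\R,d\rho)$ (this is precisely surjectivity of $U$, established in the previous theorem), we conclude $(UG(z,x,\cdot))(\lam) = \phi(\lam,x)/(\lam-z)$ in $L^2(\R,d\rho)$.

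For the second identity in \eqref{UG}, I would differentiate with respect to $x$. The cleanest route is to repeat the argument with $\partial_x G(z,x,\cdot)$ in place of $G(z,x,\cdot)$: pairing against $f$, one integrates by parts or uses $\spr{\partial_x G(z,x,\cdot)}{f} = \partial_x\, \overline{\left((H-z^*)^{-1}f\right)(x)}$, and then differentiating the formula for $\left((H-z^*)^{-1}f\right)(x)$ above under the integral sign (justified since the integrand and its $x$-derivative are controlled, $\phi(\lam,\cdot)$ being a solution of the ODE and hence $C^1$ in $x$ locally uniformly) produces $\phi'(\lam,x)/(\lam-z)$.

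**Main obstacle.** The genuinely delicate point is the interchange of differentiation and integration needed for the second formula, and more broadly the justification that $G(z,x,\cdot)$ and $\partial_x G(z,x,\cdot)$ genuinely lie in $L^2(a,b)$ and that the pairing formulas hold — this rests on the fact that $\psi(z,\cdot)$ is square integrable near $b$ and $\phi(z,\cdot)$ near $a$, so $G(z,x,\cdot)$ decays appropriately at both ends for $z \notin \sig(H)$. Once that is in hand, the rest is bookkeeping with the unitarity and intertwining properties of $U$. An alternative, perhaps cleaner, route avoiding the explicit resolvent pairing is to note directly that for $z \notin \sig(H)$ the function $G(z,x,\cdot)$ solves $(\tau - z)u = 0$ on $(a,x)$ and on $(x,b)$ with a unit jump in the derivative at $x$; applying $U$ and using $U(H - z) = (\lam - z)U$ shows $(\lam-z)(UG(z,x,\cdot))(\lam)$ must be the transform of (the distribution) $\delta_x$, which by the generalized eigenfunction expansion is $\phi(\lam,x)$. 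I expect the first, resolvent-based, argument to be the one requiring the least extra machinery given what has already been proved.
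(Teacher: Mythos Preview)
Your approach is essentially the same as the paper's: both use the intertwining relation $(H-z)^{-1}f = U^{-1}\frac{1}{\lam-z}Uf$ together with the inversion formula \eqref{Uinv} to obtain $\int_a^b G(z,x,y)f(y)\,dy = \int_\R \frac{\phi(\lam,x)\hat f(\lam)}{\lam-z}\,d\rho(\lam)$, and then conclude by density. The one technical point the paper makes explicit and you gloss over is that the inversion formula yields this identity only in $L^2$, i.e.\ for a.e.\ $x$; the paper upgrades it to every $x$ by observing that both sides are continuous in $x$ when $\hat f$ has compact support, after which the second formula follows by differentiating in $x$ exactly as you suggest.
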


\begin{proof}
First of all note that $G(z,x,.)\in L^2(a,b)$ for every $x\in(a,b)$ and $z\in\C\setminus\sig(H)$.
Moreover, from $(H-z)^{-1} f = U^{-1} \frac{1}{\lam-z} U f$ we have
\be \label{eqslgfsptr}
\int_a^b G(z,x,y) f(y)\, dy = \int_\R \frac{\phi(\lam,x) \hat{f}(\lam)}{\lam-z} d\rho(\lam).
\ee
Here equality is to be understood in $L^2$, that is, for a.e.\ $x$ (cf.\ \eqref{Uinv}).
However, the left-hand side is continuous with respect to $x$ and so is the
right-hand side, at least if $\hat{f}$ has compact support. Since this set is dense,
the first equality in \eqref{UG} follows. Similarly, the second follows after differentiating \eqref{eqslgfsptr}
with respect to $x$.
\end{proof}

Differentiating with respect to $z$ we even obtain

\begin{corollary}
We have
\be
(U \partial_z^k G(z,x,.))(\lam) = \frac{k! \phi(\lam,x)}{(\lam-z)^{k+1}} \:\text{and}\:
(U \partial_z^k \partial_x G(z,x,.))(\lam) = \frac{k! \phi'(\lam,x)}{(\lam-z)^{k+1}}
\ee
for every $x\in(a,b)$, $k\in\N_0$, and every $z\in\C\setminus\sig(H)$.
\end{corollary}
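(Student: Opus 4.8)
\emph{Strategy.} The plan is to show that, for each fixed $x\in(a,b)$, one has the $L^2(a,b)$-valued identities
\be\label{eq:dzkG}
\partial_z^k G(z,x,\cdot) = k!\,(H-z)^{-k}\,G(z,x,\cdot)\quad\text{and}\quad \partial_z^k \partial_x G(z,x,\cdot) = k!\,(H-z)^{-k}\,\partial_x G(z,x,\cdot)
\ee
for all $k\in\N_0$ and $z\in\C\setminus\sig(H)$, and then to push these through the unitary $U$. Once \eqref{eq:dzkG} is established, the conclusion is immediate: since $U$ maps $H$ to multiplication by $\lam$, the operator $U(H-z)^{-k}$ equals multiplication by the function $\lam\mapsto(\lam-z)^{-k}$ (bounded on $\supp\rho\subseteq\sig(H)$ because $z\notin\sig(H)$) composed with $U$, so applying $U$ to \eqref{eq:dzkG} and invoking Lemma~\ref{lemUub} gives
$(U\partial_z^k G(z,x,\cdot))(\lam)=k!\,(\lam-z)^{-k}(UG(z,x,\cdot))(\lam)=k!\,\phi(\lam,x)/(\lam-z)^{k+1}$, and likewise for the $\partial_x$-version using the second identity of Lemma~\ref{lemUub}.

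\emph{Proof of \eqref{eq:dzkG}.} First I would record that $z\mapsto G(z,x,\cdot)\in L^2(a,b)$ is holomorphic on $\C\setminus\sig(H)$; this follows from the explicit formula \eqref{defgf} together with the facts that solutions of $\tau u=zu$ depend holomorphically on $z$ locally uniformly in $x$ and that $\phi(z,\cdot)$, $\psi(z,\cdot)$ are square integrable near $a$, $b$ respectively (alternatively, the second resolvent identity yields $G(z,x,\cdot)=G(z_0,x,\cdot)+(z-z_0)(H-z)^{-1}G(z_0,x,\cdot)$, which is manifestly holomorphic near $z_0$). Differentiating the kernel identity $\partial_z (H-z)^{-1}=(H-z)^{-2}=(H-z)^{-1}(H-z)^{-1}$ and comparing integral kernels (using the symmetry $G(z,x,y)=G(z,y,x)$, so that the kernel of $(H-z)^{-2}$ is $\int_a^b G(z,x,t)G(z,t,y)\,dt=((H-z)^{-1}G(z,x,\cdot))(y)$ a.e.) gives the case $k=1$, namely $\partial_z G(z,x,\cdot)=(H-z)^{-1}G(z,x,\cdot)$ in $L^2(a,b)$. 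An induction using $\partial_z(H-z)^{-k}=k(H-z)^{-k-1}$ and the product rule then yields the first identity in \eqref{eq:dzkG}. For the $\partial_x$-version I would note $\partial_x G(z,x,\cdot)\in L^2(a,b)$ (the corner of $y\mapsto G(z,x,y)$ at $y=x$ is $L^2$-negligible, so $x\mapsto G(z,x,\cdot)$ is $C^1$ into $L^2(a,b)$ with derivative $(\partial_xG)(z,x,\cdot)$), and then either repeat the above argument with $\partial_x G$ in place of $G$, or simply apply $\partial_x$ to the first identity in \eqref{eq:dzkG} and interchange $\partial_x$ with $\partial_z^k$ and with the $x$-independent operator $(H-z)^{-k}$.

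\emph{Main obstacle.} The one point genuinely worth spelling out is the bookkeeping of \emph{in which sense} the differentiations are taken: one must know that the vector-valued maps $z\mapsto G(z,x,\cdot)$ and $z\mapsto\partial_xG(z,x,\cdot)$ are holomorphic into $L^2(a,b)$, so that differentiation in $z$ commutes with the bounded operators $U$ and $(H-z)^{-k}$, and that no hidden $z$-dependence of the vectors being differentiated is overlooked. This is precisely what the explicit form \eqref{defgf} (or the resolvent-identity representation) provides. Everything else is routine, and I expect the $k=1$ case of \eqref{eq:dzkG} to be the only step requiring a few lines.
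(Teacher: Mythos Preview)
Your proposal is correct, and you have correctly identified the only substantive point: the $L^2(a,b)$-valued holomorphy of $z\mapsto G(z,x,\cdot)$ (and of $z\mapsto\partial_x G(z,x,\cdot)$), which is what makes $\partial_z$ commute with the bounded operator $U$.

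The paper's route is slightly more direct than yours. Rather than first proving the resolvent identity \eqref{eq:dzkG} on the $L^2(a,b)$ side and then transporting via $U$, the paper simply differentiates the identities of Lemma~\ref{lemUub} with respect to $z$: once one knows $z\mapsto G(z,x,\cdot)$ is holomorphic into $L^2(a,b)$, the left-hand side $(U G(z,x,\cdot))(\lam)$ is holomorphic into $L^2(\R,d\rho)$ and $\partial_z^k$ passes through $U$, while on the right-hand side one differentiates $\phi(\lam,x)/(\lam-z)$ directly to obtain $k!\,\phi(\lam,x)/(\lam-z)^{k+1}$. Your detour through $\partial_z^k G(z,x,\cdot)=k!\,(H-z)^{-k}G(z,x,\cdot)$ is a valid alternative computation of the same derivative, and it has the minor virtue of making the operator-theoretic content explicit, but it is not needed; the paper's one-line ``differentiate both sides'' suffices once the holomorphy is in hand.
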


\begin{remark}\label{rem:uniqrho}
We have seen in Remark~\ref{rem:uniq} that $M(z)$ is not unique. However, given
$\ti{M}(z)$ as in Remark~\ref{rem:uniq}, the spectral measures are related by
\[
d\ti{\rho}(\lam) = \E^{-2g(\lam)} d\rho(\lam).
\]
Hence the measures are mutually absolutely continuous and the associated spectral
transformation just differ by a simple rescaling with the positive function $\E^{-2g(\lam)}$.
\end{remark}

\section{An integral representation for singular $m$-functions}
\label{sec:ir}

One of the most important properties of Herglotz--Nevanlinna functions is the existence of
an integral representation. Our first result gives such an integral representation for our singular $m$-function.

\begin{theorem}\label{IntR}
Let $M(z)$ be a singular Weyl function and $\rho$ its associated spectral measure. Then there exists
an entire function $g(z)$ such that $g(\lam)\ge 0$ for $\lam\in\R$ and $\E^{-g(\lam)}\in L^2(\R, d\rho)$.

Moreover, for any entire function $\hat{g}(z)$ such that $\hat{g}(\lam)>0$ for $\lam\in\R$ and $(1+\lam^2)^{-1} \hat{g}(\lam)^{-1}\in L^1(\R, d\rho)$
(e.g.\ $\hat{g}(z)=\E^{2g(z)}$) we have the integral representation
\be\label{Mir}
M(z) = E(z) + \hat{g}(z) \int_\R \left(\frac{1}{\lam-z} - \frac{\lam}{1+\lam^2}\right) \frac{d\rho(\lam)}{\hat{g}(\lam)},
\qquad z\in\C\backslash\sig(H),
\ee
where $E(z)$ is a real entire function.
\end{theorem}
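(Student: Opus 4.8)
The plan is to reduce the claim about the singular Weyl function $M(z)$ to the classical Herglotz--Nevanlinna integral representation applied to the (genuine) Herglotz functions $m_f(z) = \spr{f}{(H-z)^{-1}f}$ produced in Section~\ref{sec:st}, and then patch the local representations together using the relation \eqref{eqMEm}. First I would construct the entire function $g(z)$: since $\rho$ is a locally finite Borel measure on $\R$, one can choose a sequence of points and a Weierstra\ss{}-type product (or simply invoke that for any such measure there is an entire function growing fast enough on the real axis that its reciprocal square is $\rho$-integrable) so that $g(\lam)\ge 0$ and $\E^{-g(\lam)}\in L^2(\R,d\rho)$; concretely one can take $g$ real entire with sufficiently rapidly growing real zeros. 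Given such a $g$, the function $\hat g(z)=\E^{2g(z)}$ satisfies $\hat g(\lam)>0$ and $(1+\lam^2)^{-1}\hat g(\lam)^{-1}\le (1+\lam^2)^{-1}\E^{-2g(\lam)}\in L^1(\R,d\rho)$, so the class of admissible $\hat g$ in the statement is nonempty.

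Next, fix an admissible $\hat g$. Locally near a point $\lam_0\in\R$, choose $f=\chi_{[c,d]}\phi(\lam_0,\cdot)$ as in the derivation of \eqref{eqMEm}, so that $\hat f(\lam)\ne 0$ on a neighborhood of $\lam_0$ and
\[
M(z) = \frac{-E_f(z) + m_f(z)}{\hat f(z)^2}.
\]
The function $m_f$ is Herglotz with representing measure $\mu_f$, which by Lemma~\ref{lemrho} equals $|\hat f|^2 d\rho$, so it has the integral representation $m_f(z) = A_f + B_f z + \int_\R\bigl(\tfrac{1}{\lam-z}-\tfrac{\lam}{1+\lam^2}\bigr)|\hat f(\lam)|^2 d\rho(\lam)$ with $B_f\ge 0$. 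Dividing by $\hat f(z)^2$ and inserting this, one gets
\[
M(z) = E^{loc}(z) + \frac{1}{\hat f(z)^2}\int_\R\left(\frac{1}{\lam-z}-\frac{\lam}{1+\lam^2}\right)\frac{|\hat f(\lam)|^2}{\hat g(\lam)}\,\hat g(\lam)\,d\rho(\lam),
\]
where $E^{loc}$ collects the entire remainder (it is real because all ingredients satisfy the symmetry $\cdot(z^*)^*=\cdot(z)$). The goal is to replace the weight $|\hat f(\lam)|^2$ by the globally fixed $\hat g(\lam)$; since $|\hat f(\lam)|^2$ and $\hat g(\lam)$ differ by a meromorphic (indeed, after clearing, entire/rational) factor on the support near $\lam_0$, the difference of the two integrals is absorbed into the entire part.

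The key technical step — and the one I expect to be the main obstacle — is upgrading these purely local representations to a single global one valid on all of $\C\setminus\sig(H)$ with one entire $E(z)$. Here the argument is: the function
\[
R(z) := M(z) - \hat g(z)\int_\R\left(\frac{1}{\lam-z}-\frac{\lam}{1+\lam^2}\right)\frac{d\rho(\lam)}{\hat g(\lam)}
\]
is well-defined and meromorphic on $\C\setminus\R$ with possible poles only at eigenvalues (from $M$); the integral converges and is analytic on $\C\setminus\supp\rho$ because $(1+\lam^2)^{-1}\hat g(\lam)^{-1}\in L^1(d\rho)$ and $\hat g$ is entire. By the local computation above, in a neighborhood of each real $\lam_0$ the function $R(z)$ agrees with a real entire function (the discrepancy between weight $\hat g$ and weight $|\hat f|^2$, divided/multiplied by the appropriate entire functions, is entire there — one must check that $\hat g/|\hat f|^2$ times the integral defect has only removable singularities, using that $\mu_f$ has no mass at zeros of $\hat f$, exactly as in the proof of Lemma~\ref{lemrho}). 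Hence $R(z)$ extends analytically across every point of $\R$, so $R$ is entire; the symmetry $R(z^*)^*=R(z)$ makes it real entire. Setting $E(z):=R(z)$ gives \eqref{Mir}. I would be careful about two points: that the subtraction term $\lam/(1+\lam^2)$ is harmless when regrouping local pieces (it only shifts $E$ by a real constant since $\int(\lam/(1+\lam^2))(\hat g(\lam)^{-1} - |\hat f(\lam)|^{-2}|\hat f(\lam)|^2\ldots)$-type terms are finite), and that the growth/convergence of the integral is genuinely controlled by the integrability hypothesis on $\hat g$, which is why that hypothesis is imposed.
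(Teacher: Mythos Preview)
Your strategy---define $R(z) = M(z) - \hat g(z)\int_\R\bigl(\tfrac{1}{\lam-z}-\tfrac{\lam}{1+\lam^2}\bigr)\tfrac{d\rho(\lam)}{\hat g(\lam)}$ and show it extends holomorphically across each real $\lam_0$ via the local relation $M(z)=\bigl(-E_f(z)+m_f(z)\bigr)/\hat f(z)^2$---is exactly the paper's approach. Two places where you are vague and the paper is explicit deserve attention. First, for the existence of $g$: your suggestion of a ``Weierstra\ss-type product with rapidly growing real zeros'' does not work (zeros of $g$ make $\E^{-g}$ equal $1$ there, which is the wrong direction); the paper instead interpolates the distribution function $R(\lam)=\rho([-\lam,\lam])$ by an entire $h$ with $h(n^2)=R(n+1)$ and sets $g(z)=\tfrac12\sum_j|h_j|z^{2j}$, forcing $R(\lam)\le 2g(\lam)$ and hence $\int \E^{-2g}\,d\rho<\infty$. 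Second, the ``integral defect'' step you flag is the heart of the proof and is not about mass at zeros of $\hat f$ (you chose $f$ so $\hat f\ne 0$ near $\lam_0$): after splitting off a small interval $I\ni\lam_0$, the only nontrivial term is $\int_I \tfrac{1}{\lam-z}\bigl(\tfrac{\hat f(\lam)^2}{\hat f(z)^2}-\tfrac{\hat g(z)}{\hat g(\lam)}\bigr)d\rho(\lam)$, and this is holomorphic near $\lam_0$ because the bracket vanishes at $\lam=z$, making the integrand jointly holomorphic in $(\lam,z)$ near $(\lam_0,\lam_0)$. Once you make these two points precise, your argument is complete and coincides with the paper's.
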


\begin{proof}
We first show that entire functions $\hat{g}$ of the required type exist. To this end we
define the function
\[
R(\lam) = \begin{cases} \int_{[-\lam,\lam]} d\rho, & \lam \ge 0,\\
0, & \lam <0, \end{cases}
\]
which is nonnegative and nondecreasing for $\lam>0$. Then we can find an entire function $h(z) = \sum_{j=0}^\infty h_j z^j$ such that
$h(n^2) = R(n+1)$ for $n\in\N_0$ (cf.\ Example~1 in Sect.~51 of \cite{mar}). Now choose
\[
g(z) =  \frac{1}{2}\sum_{j=0}^\infty |h_j| z^{2j}.
\]
Then, by construction $R(n+1) = h(n^2) \le 2g(n)$ and thus $R(\lam) \le 2g(\lam)$ for $\lam\ge 0$. Moreover,
\[
\int_\R \E^{-2g(\lam)} d\rho(\lam) = \int_{[0,\infty)} \E^{-2g(\lam)} dR(\lam) \le \int_{[0,\infty)} \E^{-R(\lam)} dR(\lam) 
\le \int_0^\infty \E^{-\lam} d\lam< \infty,
\]
where the last step follows from the substitution rule \cite[Cor.~5.4]{tsrlsi} since $\E^{-\lam}$ is decreasing.

Now let some $\hat{g}$ be given. It remains to verify the integral representation \eqref{Mir}.
Abbreviate $\hat{g}$ times the integral on the right-hand side of \eqref{Mir} by $\ti{M}(z)$. Since $\ti{M}(z)$ is holomorphic for $z\in\C\setminus\R$
it suffices to show that $M(z)-\ti{M}(z)$ is holomorphic near any point $\lam_0\in\R$. So fix $\lam_0\in\R$ and
choose some real-valued $f\in L^2_c(a,b)$ such that $\hat{f}(z)$ defined in \eqref{defhatf} does not vanish at $\lam_0$.
Then by virtue of Lemma~\ref{lemmfM} we obtain
\begin{align*}
&M(z)-\ti{M}(z) = -\frac{E_f(z)}{\hat{f}(z)^2} + \frac{m_f(z)}{\hat{f}(z)^2} - \ti{M}(z)\\
&\quad = -\frac{E_f(z)}{\hat{f}(z)^2} + \frac{1}{\hat{f}(z)^2} \int_{\R\setminus I} \frac{d\mu_f(\lam)}{\lam-z} -
\hat{g}(z) \int_{\R\setminus I} \left(\frac{1}{\lam-z} - \frac{\lam}{1+\lam^2}\right) \frac{d\rho(\lam)}{\hat{g}(\lam)}\\
&\qquad + \hat{g}(z)\int_I \frac{\lam}{1+\lam^2}\frac{d\rho(\lam)}{\hat{g}(\lam)} +
\int_I \frac{1}{\lam-z}\left(\frac{\hat{f}(\lam)^2}{\hat{f}(z)^2} - \frac{\hat{g}(z)}{\hat{g}(\lam)}\right) d\rho(\lam),
\end{align*}
where $I$ is some small interval containing $\lam_0$ such that $\hat{f}(z)$ does not vanish in a neighborhood of $I$.
Now observe that all terms in the above representation are holomorphic near $\lam_0$. In fact, for the first four
terms this is clear and concerning the last one note that the integrand is holomorphic as a function of both
variables in a neighborhood of $(\lam_0,\lam_0)$.
\end{proof}

Choosing $\hat{g}(z)=\E^{2g(z)}$ in the previous theorem we have
\be
M(z) = \ti{E}(z) + \E^{2g(z)} \int_\R \frac{\E^{-2g(\lam)}d\rho(\lam)}{\lam-z}, \quad
\ti{E}(z) = E(z) - \E^{2g(z)} \int_\R \frac{\lam \E^{-2g(\lam)}d\rho(\lam)}{1+\lam^2}.
\ee
Hence if we choose $f(z) = -\exp(-g(z)) \ti{E}(z)$ and switch to a new system of solutions as in Remark~\ref{rem:uniq},
then, by Remark~\ref{rem:uniqrho}, the new measure is $d\ti{\rho}(\lam)= \E^{-2g(\lam)}d\rho(\lam)$, which is a finite
measure. In particular, we see that the new singular Weyl function is a Herglotz--Nevanlinna function.

\begin{corollary}\label{cor:herg}
There is always a system of real entire solutions $\ti{\theta}(z,x)$ and $\ti{\phi}(z,x)$ such that the associated
spectral measure $\ti{\rho}$ is finite and the associated singular Weyl function is a Herglotz--Nevanlinna function
given by
\be
\ti{M}(z) = \int_\R \frac{d\ti{\rho}(\lam)}{\lam-z}.
\ee
\end{corollary}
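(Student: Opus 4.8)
The plan is to read off Corollary~\ref{cor:herg} directly from Theorem~\ref{IntR} together with Remark~\ref{rem:uniq} and Remark~\ref{rem:uniqrho}, exactly as the two paragraphs preceding the corollary already indicate. So the proof is essentially a matter of assembling the pieces in the right order.

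First I would invoke Theorem~\ref{IntR} with the specific choice $\hat{g}(z) = \E^{2g(z)}$, where $g$ is the entire function produced in the first part of that theorem (so that $g(\lam) \ge 0$ and $\E^{-g(\lam)} \in L^2(\R,d\rho)$, hence $(1+\lam^2)^{-1}\E^{-2g(\lam)} \in L^1(\R,d\rho)$ since $\E^{-2g(\lam)} \le \E^{-g(\lam)}\cdot\E^{-g(\lam)}$ is integrable — in fact $\E^{-2g(\lam)}$ is itself a finite measure). This yields
\[
M(z) = \ti{E}(z) + \E^{2g(z)}\int_\R \frac{\E^{-2g(\lam)}\,d\rho(\lam)}{\lam-z},
\]
with $\ti{E}(z) = E(z) - \E^{2g(z)}\int_\R \lam(1+\lam^2)^{-1}\E^{-2g(\lam)}\,d\rho(\lam)$ a real entire function (the integral is a real constant since $\E^{-2g(\lam)}d\rho$ is finite and $\lam(1+\lam^2)^{-1}$ is bounded). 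Next I would set $f(z) = -\E^{-g(z)}\ti{E}(z)$, which is real entire, and pass to the new system $\ti{\theta}(z,x) = \E^{-g(z)}\theta(z,x) - f(z)\phi(z,x)$, $\ti{\phi}(z,x) = \E^{g(z)}\phi(z,x)$ as in Remark~\ref{rem:uniq}. By that remark the transformed singular Weyl function is
\[
\ti{M}(z) = \E^{-2g(z)}M(z) + \E^{-g(z)}f(z) = \E^{-2g(z)}\ti{E}(z) - \E^{-2g(z)}\ti{E}(z) + \int_\R \frac{\E^{-2g(\lam)}\,d\rho(\lam)}{\lam-z} = \int_\R \frac{\E^{-2g(\lam)}\,d\rho(\lam)}{\lam-z},
\]
and by Remark~\ref{rem:uniqrho} the associated spectral measure is precisely $d\ti{\rho}(\lam) = \E^{-2g(\lam)}\,d\rho(\lam)$, which is finite by the estimate above. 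Finally, since $\ti{\rho}$ is a finite (positive) Borel measure, $\ti{M}(z) = \int_\R (\lam-z)^{-1}\,d\ti{\rho}(\lam)$ is manifestly a Herglotz--Nevanlinna function, completing the proof.

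There is really no hard step here: the corollary is a packaging of results already proved. The only points requiring a line of care are (i) checking that $\E^{-g}\in L^2(\R,d\rho)$ indeed forces $\E^{-2g}d\rho$ to be a finite measure, which is immediate from Cauchy--Schwarz (or simply $\E^{-2g}\le\E^{-g}$ where $g\ge 0$ is large, plus local finiteness of $\rho$ — more carefully, $\int \E^{-2g}d\rho = \int\E^{-g}\cdot\E^{-g}d\rho \le \|\E^{-g}\|_{L^2(\rho)}^2 < \infty$), and (ii) confirming that the constant subtracted off in $\ti{E}$ is finite and real so that $\ti{E}$ is genuinely a real entire function. If anything is a mild obstacle, it is just bookkeeping the transformation formulas from Remark~\ref{rem:uniq} with the correct signs so that the entire part cancels exactly; everything else is routine.
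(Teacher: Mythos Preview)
Your proposal is correct and follows exactly the approach the paper takes in the two paragraphs immediately preceding the corollary: apply Theorem~\ref{IntR} with $\hat{g}=\E^{2g}$, then use the transformation from Remark~\ref{rem:uniq} with $f(z)=-\E^{-g(z)}\ti E(z)$ to cancel the entire part, and invoke Remark~\ref{rem:uniqrho} to identify the new spectral measure as the finite measure $\E^{-2g(\lam)}d\rho(\lam)$. The only cosmetic remark is that your parenthetical ``simply $\E^{-2g}\le\E^{-g}$'' is not by itself enough (we do not know $\E^{-g}\in L^1(d\rho)$), but you immediately supply the correct justification $\int \E^{-2g}\,d\rho=\|\E^{-g}\|_{L^2(\rho)}^2<\infty$.
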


We will illustrate this result in Remark~\ref{rem:tirhobes}.

In some cases one might not want to rescale the measure too much. In such a situation the entire function
$\hat{g}(z)= \E^{-2g(z)}$ constructed in the proof of the previous theorem will usually not be optimal. 
In order to find a better $\hat{g}(z)$ note that $\phi(\lam,x)^2 + \phi'(\lam,x)^2$
is positive for $\lam\in\R$ and in $L^1(\R, (1+\lam^2)^{-1}d\rho)$ by Lemma~\ref{lemUub}.
We will exploit this observation to find a better $\hat{g}(z)$ for a large class of interesting examples in Corollary~\ref{corIntR}.

As another consequence we get a criterion when our singular Weyl $m$-function is a
generalized Nevanlinna function with no nonreal poles and the only generalized pole of nonpositive type at $\infty$.
We will denote the set of all such generalized Nevanlinna functions by $N_\kappa^\infty$ and refer to Appendix~\ref{app:gnf}
for further information.

\begin{theorem}\label{thm:nkap}
Fix the solution $\phi(z,x)$. Then there is a corresponding solution $\theta(z,x)$ such that $M(z)\in N_\kappa^\infty$
for some $\kappa\le k$ if and only if $(1+\lam^2)^{-k-1} \in L^1(\R,d\rho)$. Moreover, $\kappa=k$ if $k=0$ or
$(1+\lam^2)^{-k} \not\in L^1(\R,d\rho)$.
\end{theorem}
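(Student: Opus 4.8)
The plan is to build the desired solution $\theta(z,x)$ directly from the integral representation in Theorem~\ref{IntR}, choosing the entire function $\hat g(z)$ to be a polynomial of the right degree. Indeed, suppose first that $(1+\lam^2)^{-k-1}\in L^1(\R,d\rho)$. Then $\hat g(z) = (1+z^2)^{k}$ satisfies the hypotheses of Theorem~\ref{IntR} (it is entire, positive on $\R$, and $(1+\lam^2)^{-1}\hat g(\lam)^{-1} = (1+\lam^2)^{-k-1}\in L^1(\R,d\rho)$), so with the corresponding adjustment of $\theta(z,x)$ via Remark~\ref{rem:uniq} (subtracting the entire function $f(z)=E(z)$) we obtain a singular Weyl function of the form
\be
M(z) = (1+z^2)^k \int_\R \left(\frac{1}{\lam-z} - \frac{\lam}{1+\lam^2}\right) \frac{d\rho(\lam)}{(1+\lam^2)^k}.
\ee
The first step is then to recognize this as a generalized Nevanlinna function: the integral defines a Herglotz--Nevanlinna function $M_0(z)$ (its measure $d\rho(\lam)/(1+\lam^2)^k$ being finite after the standard Herglotz normalization), and multiplication of a Herglotz--Nevanlinna function by the nonnegative polynomial $(1+z^2)^k = (z-\I)^k(z+\I)^k$ produces a function in $N_\kappa^\infty$ with $\kappa\le k$; this is a known structural fact about the classes $N_\kappa^\infty$ which I would cite from Appendix~\ref{app:gnf} (the multiplicity of the generalized pole at $\infty$ is bounded by the number of factors introduced).

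Conversely, if there is a choice of $\theta(z,x)$ with $M(z)\in N_\kappa^\infty$ for some $\kappa\le k$, I would invoke the integral representation for functions in $N_\kappa^\infty$ (again from Appendix~\ref{app:gnf}): such a function has the form $M(z) = p(z) + (1+z^2)^{\kappa} \int_\R (\lam-z)^{-1}(1+\lam^2)^{-\kappa}\, d\sigma(\lam) + \text{(lower order)}$ with $d\sigma$ a finite measure and $p$ a polynomial of degree $\le 2\kappa+1$. Comparing this with the Stieltjes inversion that recovers $d\rho$ from $\im M(\lam+\I0)$ (Lemma~\ref{lemrho}, valid locally regardless of the global structure of $M$), one reads off that $d\rho$ is, up to the positive weight coming from $\im$ of the boundary values, absolutely continuous with respect to $(1+\lam^2)^{\kappa}\,d\sigma(\lam)$ plus the discrete contribution of the poles of $p$-type, all of which integrate $(1+\lam^2)^{-\kappa-1}$; hence $(1+\lam^2)^{-k-1}\in L^1(\R,d\rho)$ since $\kappa\le k$. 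The sharpness statement ($\kappa=k$ when $k=0$ or $(1+\lam^2)^{-k}\notin L^1(\R,d\rho)$) follows because a strictly smaller $\kappa$ would force $(1+\lam^2)^{-\kappa-1}\in L^1(\R,d\rho)$ with $\kappa+1\le k$, contradicting the assumed non-integrability; the case $k=0$ is just the Herglotz case already handled.

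The main obstacle I anticipate is the converse direction, specifically making precise the passage from "$M\in N_\kappa^\infty$" back to an integrability statement for $d\rho$. One has to be careful that $d\rho$ is only defined through a local Stieltjes--Livšic inversion (Lemma~\ref{lemrho}) and that changing $\theta$ rescales $d\rho$ only by a factor $\E^{-2g(\lam)}$ with $g$ real entire, hence of at most polynomial-order influence on integrability against $(1+\lam^2)^{-k-1}$ — so the integrability condition is genuinely independent of the choice of $\theta$, matching the phrasing "fix $\phi(z,x)$". The cleanest route is probably to use the characterization of $N_\kappa^\infty$ in terms of the number of negative squares of an associated kernel together with the integral representation for this class, both collected in Appendix~\ref{app:gnf}; once that representation is in hand, the remaining work is the routine bookkeeping sketched above. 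I would also double-check the boundary case distinctions — whether $\kappa$ can equal $k$ or must be strictly less — against the exact normalization conventions fixed in the appendix.
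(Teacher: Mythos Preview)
Your approach is essentially the same as the paper's: for the forward direction you apply Theorem~\ref{IntR} with $\hat g(z)=(1+z^2)^k$ and subtract the resulting entire part via Remark~\ref{rem:uniq}, and for the converse you invoke the integral representation \eqref{minkappa}--\eqref{minkappa'} of $N_\kappa^\infty$-functions from Appendix~\ref{app:gnf} and identify the measure there with $\rho$ via Stieltjes--Liv\v{s}i\'{c} inversion (Lemma~\ref{lemrho}). One small correction to your side remark: since $\phi(z,x)$ is \emph{fixed}, the only freedom in $\theta$ is $\tilde\theta=\theta-f\phi$ with $f$ real entire (so $g\equiv 0$ in the language of Remark~\ref{rem:uniq}), and this changes $M$ to $M+f$ but leaves $d\rho$ \emph{unchanged}; the factor $\E^{-2g}$ you mention arises only when $\phi$ itself is rescaled, which is excluded here.
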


\begin{proof}
If $(1+\lam^2)^{-k-1} \in L^1(\R,d\rho)$ we can choose $\hat{g}(z)= (1+z^2)^k$ and by Theorem~\ref{IntR}
we have
\[
M(z) = f(z) + (1+z^2)^k \int_\R \left(\frac{1}{\lam-z} - \frac{\lam}{1+\lam^2}\right) \frac{d\rho(\lam)}{(1+\lam^2)^k},
\]
where $f(z)$ is entire. Hence choosing $\ti{\theta}(z,x) = \theta(z,x) - f(z) \phi(z,x)$ (cf.\ Remark~\ref{rem:uniq})
the corresponding Weyl $m$-function
\be
\ti{M}(z) = (1+z^2)^k \int_\R \left(\frac{1}{\lam-z} - \frac{\lam}{1+\lam^2}\right) \frac{d\rho(\lam)}{(1+\lam^2)^k}
\ee
is of the required type.

Conversely, a generalized Nevanlinna function from $N_\kappa^\infty$ admits the integral representation
\eqref{minkappa}--\eqref{minkappa'}, where the measure $d\rho$ coincides with the one from Lemma~\ref{lemrho}.
\end{proof}

Note that the condition $(1+\lam^2)^{-k-1} \in L^1(\R,d\rho)$ is related to the growth of $M(z)$ along the imaginary axis
(cf.\ Lemma~\ref{lem:gnf:grow}). In order to identify possible values of $k$ one can try to bound $\lam^{-k}$ by a
linear combination of $\phi(\lam,x)^2$ and $\phi'(\lam,x)^2$ which are in $L^1(\R, (1+\lam^2)^{-1}d\rho)$ by
Lemma~\ref{lemUub} as pointed out before.

\section{An example}
\label{sec:ex}

Our prototypical example will be the spherical Schr\"odinger equation given by
\be
H_l = - \frac{d^2}{dx^2} + \frac{l(l+1)}{x^2}, \qquad x\in(0,b), \:  l \ge -\frac{1}{2},
\ee
with the usual boundary condition at $x=0$ (for $l\in[-\frac{1}{2},\frac{1}{2})$)
\be
\lim_{x\to0} x^l ( (l+1)f(x) - x f'(x))=0,
\ee
which arises when investigating the free Schr\"odinger operator in spherical coordinates (e.g., \cite{tschroe}, \cite{wdln})
Note that we explicitly allow noninteger values of $l$ such that we also cover the case of
arbitrary space dimension $n\ge 2$, where $l(l+1)$ has to be replaced by $l(l+n-2) + (n-1)(n-3)/4$ \cite[Sec.~17.F]{wdln}.
The boundary condition used here corresponds to the Friedrich's extension and we refer to \cite{bg} for a characterization
of all possible boundary conditions in terms of Rellich's {\em Anfangszahlen} (see also \cite{ek}).

Two linearly independent solutions of
\be
- u''(x) + \frac{l(l+1)}{x^2} u(x) = z u(x)
\ee
are given by
\be\label{defphil}
\phi_l(z,x) = z^{-\frac{2l+1}{4}} \sqrt{\frac{\pi x}{2}} J_{l+\frac{1}{2}}(\sqrt{z} x),
\ee
\be\label{defthetal}
\theta_l(z,x) = -z^{\frac{2l+1}{4}} \sqrt{\frac{\pi x}{2}} \begin{cases}
\frac{-1}{\sin((l+\frac{1}{2})\pi)} J_{-l-\frac{1}{2}}(\sqrt{z} x), & {l+\frac{1}{2}} \in \R_+\setminus \N_0,\\
Y_{l+\frac{1}{2}}(\sqrt{z} x) -\frac{1}{\pi}\log(z) J_{l+\frac{1}{2}}(\sqrt{z} x), & {l+\frac{1}{2}} \in\N_0,\end{cases}
\ee
where $J_{l+\frac{1}{2}}$ and $Y_{l+\frac{1}{2}}$ are the usual Bessel and Neumann functions \cite{as}.
All branch cuts are chosen along the negative real axis unless explicitly stated otherwise.
If $l$ is an integer they of course reduce to spherical Bessel and Neumann functions
and can be expressed in terms of trigonometric functions (cf.\ e.g.\ \cite[Sect.~10.4]{tschroe})

In particular, both functions are entire and their Wronskian is given by
\be
W(\theta_l(z),\phi_l(z))=1.
\ee
Moreover, on $(0,\infty)$ and $l\in\N_0$ we have
\begin{align}\nn
u_+(z,x) &= \theta_l(z,x) + M_l(z) \phi_l(z,x)\\
&= - (\I\sqrt{-z})^{l+\frac{1}{2}} \sqrt{\frac{\pi x}{2}} \left( Y_{l+\frac{1}{2}}(\I \sqrt{-z} x) - \I J_{l+\frac{1}{2}}(\I\sqrt{-z} x)\right)
\end{align}
with
\be
M_l(z) = \begin{cases}
\frac{-1}{\sin((l+\frac{1}{2})\pi)} (-z)^{l+\frac{1}{2}}, & {l+\frac{1}{2}}\in\R_+\setminus \N_0,\\
\frac{-1}{\pi} z^{l+\frac{1}{2}}\log(-z), & {l+\frac{1}{2}} \in\N_0,\end{cases}
\ee
where all branch cuts are chosen along the negative real axis. The associated spectral measure
is given by
\be
d\rho_l(\lam) = \chi_{[0,\infty)}(\lam) \lam^{l+\frac{1}{2}} \frac{d\lam}{\pi},  \qquad l \geq -\frac{1}{2},
\ee
and the associated spectral transformation is just the usual Hankel transform. Furthermore, one
infers that $M_l(z)$ is in the generalized Nevanlinna class $N_\kappa^\infty$ with $\kappa=\floor{l/2 + 3/4}$.
Here $\floor{x}= \max \{ n \in \Z | n \leq x\}$ is the usual floor function.

For more information we refer to Section~4 of \cite{gz}, to \cite{ek}, where
the limit circle case $l\in[-1/2,1/2)$ is considered, and to Section~5 of \cite{fl},
where the case $q(x) = l(l+1)/x^2 - a/ x$ is worked out (see also \cite{DSh_00}, \cite{kl}).

\begin{remark}\label{rem:tirhobes}
It is also interesting to rescale $\phi(z,x)$ and $\theta(z,x)$ according to Corollary~\ref{cor:herg} in order to obtain
a Herglotz--Nevanlinna function. Clearly, $\E^{-\lam}\in L^1(\R_+, d\rho_l)$. Then setting
\be
d\widetilde{\rho}_l(\lam):=\E^{-\lam}d\rho_l(\lam)
\ee
and using \cite[formula (3.383.10)]{GR} we obtain for the corresponding Herglotz--Nevanlinna function
\be
\ti{M}(z) = \int_0^\infty \frac{d\ti{\rho}_l(\lam)}{\lam-z}  = \frac{\Gamma(l+\frac{3}{2})}{\pi} (-z)^{l+\frac{1}{2}} \E^{-z} \Gamma(-l-\frac{1}{2},-z),
\ee
where $\Gamma(\alpha,z)$ is the incomplete Gamma function. As discussed before Corollary~\ref{cor:herg} $\ti{M}(z)$
is the singular Weyl function for $H_l$ corresponding to the fundamental system of solutions given by
\be
\ti{\phi}_l(z,x)=\E^{-\frac{z}{2}}\phi_l(z,x),\qquad \ti{\theta}_l(z,x)=\E^{\frac{z}{2}}\big(\theta_l(z,x) - E_l(z)\phi_l(z,x)\big),
\ee
where
\be
E_l(z) = \begin{cases} \frac{1}{\pi}\Gamma(l+\frac{3}{2}) \sum_{j=0}^\infty \frac{z^j}{j!(j-l-1/2)}, & {l+\frac{1}{2}}\in\R_+\setminus \N_0,\\
-\frac{1}{\pi}z^{l+\frac{1}{2}}\big(\log(-z)+\Gamma(0,-z)+\E^{-z}\sum_{j=1}^{l+1/2}\frac{j!}{z^{j}}\big), & {l+\frac{1}{2}} \in\N_0.\end{cases}
\ee
To check this use 8.354.2 and 8.334.3 from \cite{GR} in the noninteger case $l+\frac{1}{2}\notin \N_0$ and
8.352.5 \cite{GR} in the integer case $l+\frac{1}{2}\in \N_0$.
\end{remark}

\section{Exponential growth rates}

While the system $\phi(z,x)$, $\theta(z,x)$ found in Section~\ref{sec:swm} was good enough to define a singular
Weyl $m$-function and an associated spectral measure, it does not suffice for the proof of our local Borg--Marchenko
theorem. For this we will need information on the exponential growth rate of the solutions $\phi(z,x)$ and $\theta(z,x)$.
To this end we introduce the following additional assumption.

\begin{hypothesis}\label{hyp:ev}
Suppose $a$ is finite.
Suppose that the spectrum of $H^X_{(a,c)}$, $X\in\{D,N\}$ is purely discrete
\be
\sig(H^D_{(a,c)}) = \sig_d(H^D_{(a,c)}) = \{ \mu_j(c) \}_{j\in\N}, \quad
\sig(H^N_{(a,c)}) = \sig_d(H^N_{(a,c)}) = \{ \nu_j(c) \}_{j\in\N_0}
\ee
and bounded below. In addition, suppose one of the following conditions holds:
\begin{enumerate}
\item
For all values $s> \frac{1}{2}$ we have
\be\label{estorder}
\sum_{j\in\N} \frac{1}{1+|\mu_j(c)|^s}<\infty.
\ee
\item[(i')]
Assume that there exists a $\Delta(c)>0$ such that
\be
\mu_j(c) = \left(\frac{\Delta(c)}{\pi} j\right)^2 + o(j^2).
\ee
\item
For sufficiently large $j$ we have
\be\label{estmunu}
\mu_j(c)-\nu_{j-1}(c) \ge \frac{1}{\nu_{j-1}(c)^r} + \frac{1}{\mu_j(c)^r}, \qquad
\nu_j(c)-\mu_j(c) \ge \frac{1}{\nu_j(c)^r} + \frac{1}{\mu_j(c)^r},
\ee
for some $r>0$.
\end{enumerate}
\end{hypothesis}

\begin{remark}
Clearly (i) implies that the resolvent $(H^X_{(a,c)}-z)^{-1}$ is trace class and (i') implies (i).

The condition (ii) requires that the Dirichlet and Neumann spectra do not get too close to each other.
Recall that both spectra are interlacing:
\be\label{interlacing}
\nu_0(c)<\mu_1(c)<\nu_1(c)<\mu_2(c)<\cdots.
\ee

Condition (ii) is also reasonable, since by virtue of Krein's theorem \cite[Thm.~27.2.1]{lev} we have
\be\label{mmkrein}
m_-(z) = C (z-\nu_0) \prod_{j\in\N} \frac{1 - \frac{z}{\nu_j(c)}}{1 - \frac{z}{\mu_j(c)}} =
- C' \prod_{j\in\N} \frac{1 - \frac{z}{\nu_{j-1}(c)}}{1 - \frac{z}{\mu_j(c)}}, \quad C,C'>0.
\ee
However, if $\nu_j(c)$ and $\mu_j(c)$ or  $\nu_{j-1}(c)$ and $\mu_j(c)$ are eventually too close,
this would contradict the well-known asymptotics
\be
m_-(z) = -\sqrt{-z} + O(1),
\ee
where the branch cut is chosen along the negative real axis.

Finally, we note that the assumption that $H$ is bounded from below could be dropped
in principle. However, for negative potentials the convergence exponent
(i.e., the infimum over all $s$ for which \eqref{estorder} holds) can be larger than $\frac{1}{2}$
(see e.g.\ \cite{bk}) and hence one has to work with higher-order Hadamard products below.
In this respect one should also mention a result by Krein \cite{kr} which states that if $\tau$ is
regular at $a=0$ and limit circle at $b=\infty$, then the growth order of $\phi(z,x)$ is in $[1/2,1]$.
Moreover, if it is equal to $1/2$ (resp.\ $1$), then $\phi(z,x)$ is of maximal (resp.\ minimal) type.
\end{remark}

It will be convenient to look at the eigenvalue counting function
\be
N_{H^X_{(a,c)}}(R) = \#\{\lam \in\sig(H^X_{(a,c)}) | \lam \le R \} = \dim\ran P_{H^X_{(a,c)}}((-\infty,R]),
\ee
where $P_{H^X_{(a,c)}}(\Omega)$ is the family of spectral projections associated with the self-adjoint
operator ${H^X_{(a,c)}}$. The connection with our hypothesis is given by the fact that the convergence
exponent of $\mu_j(c)$ (i.e., the infimum over all $s$ for which \eqref{estorder} holds) is equal to the order of the
counting function (\cite[Lem.~3.2.2]{lev}):
\be
\inf \Big\{ s \Big| \sum_{j\in\N} \frac{1}{1+|\mu_j(c)|^s}<\infty \Big\} =\limsup_{R\to\infty} \frac{\log(N_{H^D_{(a,c)}}(R))}{\log(R)}.
\ee
Condition (i') of Hypothesis~\eqref{hyp:ev} is equivalent to
\be\label{eqwa}
N_{H^D_{(a,c)}}(R) = \frac{\Delta(c)}{\pi} R^{1/2} + o(R^{1/2}).
\ee
Moreover, since changing the boundary conditions (at one endpoint) amounts
to a rank-one resolvent perturbation, the corresponding eigenvalue counting functions satisfy
$|N_{H^D_{(a,c)}}(R) - N_{H^N_{(a,c)}}(R)| \le 1$ (see e.g.\ \cite[Corollary~II.2.1]{gk}). Hence
since we will eventually only be interested in the growth rates of these functions it will suffice
to look at $H^D_{(a,c)}$.

Finally, recall that if $H^D_{(a,c)}$ is regular we have the well-known Weyl asymptotics
\be\label{weylreg}
N_{H^D_{(a,c)}}(R) = \frac{c-a}{\pi} R^{1/2} + o(R^{1/2}),
\ee
that is, \eqref{eqwa} holds with $\Delta(c)=c-a$.

\begin{lemma}\label{lem:hic}
The first condition (i) in Hypothesis~\ref{hyp:ev} is independent of the value of $c$
and holds for $H^D_{(a,c)}$ if and only if it holds for $H^N_{(a,c)}$.
Moreover, condition (i) cannot hold for any $s \le \frac{1}{2}$.

Similarly, condition (i') in Hypothesis~\ref{hyp:ev} is independent of the value of $c$
and holds for $H^D_{(a,c)}$ if and only it holds for $H^N_{(a,c)}$ (with the same $\Delta(c)$).
Moreover,
\be
\Delta(x) = \Delta_0 + x -a.
\ee
\end{lemma}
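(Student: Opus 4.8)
The plan is to express everything through the Dirichlet counting function $N_{H^D_{(a,c)}}(R)$ and to use two facts already recorded above: changing the boundary condition at one endpoint is a rank-one resolvent perturbation, so $|N_{H^D_{(a,c)}}(R)-N_{H^N_{(a,c)}}(R)|\le 1$ (cf.\ \cite[Corollary~II.2.1]{gk}), and Dirichlet--Neumann bracketing at an interior point. Fix $a<c_1<c_2<b$. Imposing an additional Dirichlet condition at $c_1$ shrinks the form domain of $H^D_{(a,c_2)}$ and produces $H^D_{(a,c_1)}\oplus H^D_{(c_1,c_2)}$, while allowing a jump at $c_1$ enlarges it and produces the decoupled operator carrying a Neumann condition at $c_1$ on each piece; by the min-max principle $N_{H^D_{(a,c_2)}}(R)$ is sandwiched between the counting functions of these two decoupled operators, and together with the rank-one bound this gives
\[
N_{H^D_{(a,c_2)}}(R)=N_{H^D_{(a,c_1)}}(R)+N_{H^D_{(c_1,c_2)}}(R)+O(1).
\]
Since $(c_1,c_2)$ is a regular interval, the Weyl asymptotics \eqref{weylreg} yield $N_{H^D_{(c_1,c_2)}}(R)=\tfrac{c_2-c_1}{\pi}R^{1/2}+o(R^{1/2})$, so in fact $N_{H^D_{(a,c_2)}}(R)=N_{H^D_{(a,c_1)}}(R)+\tfrac{c_2-c_1}{\pi}R^{1/2}+o(R^{1/2})$.

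From this comparison the statements about condition (i) follow at once. By \cite[Lem.~3.2.2]{lev} the convergence exponent of $\{\mu_j(c)\}$ equals $\limsup_{R\to\infty}\log N_{H^D_{(a,c)}}(R)/\log R$, and adding an $O(R^{1/2})$ term changes neither this number nor whether it is $\le\tfrac12$; hence (i) holds for one $c$ iff for all, and by the rank-one bound it holds for $H^D_{(a,c)}$ iff for $H^N_{(a,c)}$. That (i) fails for every $s\le\tfrac12$ amounts to the convergence exponent being always at least $\tfrac12$: for any $c'\in(a,c)$ the lower bracketing bound gives $N_{H^D_{(a,c)}}(R)\ge N_{H^D_{(c',c)}}(R)=\tfrac{c-c'}{\pi}R^{1/2}(1+o(1))$, which forces $\mu_j(c)\le C j^2$ for large $j$ and therefore $\sum_j(1+|\mu_j(c)|^s)^{-1}=\infty$ for all $s\le\tfrac12$ (here one uses that $H^D_{(a,c)}$ is bounded below with infinitely many eigenvalues tending to $+\infty$).

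For condition (i'), recall from \eqref{eqwa} that it is equivalent to $N_{H^D_{(a,c)}}(R)=\tfrac{\Delta(c)}{\pi}R^{1/2}+o(R^{1/2})$. Feeding this into the comparison above shows that (i') holds for $c_1$ with constant $\Delta(c_1)$ iff it holds for $c_2$ with constant $\Delta(c_2)=\Delta(c_1)+(c_2-c_1)$, and the rank-one bound shows the Dirichlet and Neumann versions share the same $\Delta(c)$. The relation $\Delta(c_2)-\Delta(c_1)=c_2-c_1$, valid for all $a<c_1<c_2<b$, says exactly that $\Delta(x)-x$ is constant, i.e.\ $\Delta(x)=\Delta_0+x-a$ for a single constant $\Delta_0$ (its independence of the reference points is just transitivity of the comparison); moreover $\Delta_0\ge0$ and hence $\Delta(x)>0$, since letting $c'\downarrow a$ in $N_{H^D_{(a,c)}}(R)\ge N_{H^D_{(c',c)}}(R)$ gives $\Delta(c)\ge c-a$.

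The remaining points are routine bookkeeping: one must check that the decoupled operators produced by the bracketing are literally the ones named (the boundary behavior inherited at $a$ from $H$, limit point or limit circle, is simply transported unchanged to the left piece and plays no role), and that $\Delta_0$ is well defined. I expect the only step needing a little care, though still standard, to be the passage from the counting-function lower bound $N_{H^D_{(a,c)}}(R)\ge\delta R^{1/2}(1+o(1))$ to the eigenvalue bound $\mu_j(c)\le Cj^2$ used to force divergence of $\sum_j(1+|\mu_j(c)|^s)^{-1}$ when $s\le\tfrac12$.
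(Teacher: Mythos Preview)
Your proof is correct and follows essentially the same approach as the paper: both reduce everything to the Dirichlet counting function, compare $N_{H^D_{(a,c_2)}}$ with $N_{H^D_{(a,c_1)}}+N_{H^D_{(c_1,c_2)}}$ up to $O(1)$, and then feed in the regular Weyl asymptotics \eqref{weylreg}. The only differences are cosmetic: the paper obtains the $O(1)$ comparison in one stroke by noting that $H^D_{(a,c)}\oplus H^D_{(c,c+\delta)}$ is itself a rank-one resolvent perturbation of $H^D_{(a,c+\delta)}$ (rather than going through the two-sided bracketing you use), and for the failure at $s\le\tfrac12$ it quotes \cite[Lem.~3.2.1]{lev} on $\int R^{-3/2}N(R)\,dR$ instead of your more elementary step of converting $N(R)\gtrsim R^{1/2}$ into $\mu_j\le Cj^2$ and summing directly.
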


\begin{proof}
As already pointed out, the boundary condition is irrelevant.
To see that (i) is independent of $c$ it suffices to consider the operator $\tilde{H}_{(a,c+\delta)}^D:=H^D_{(a,c)}\oplus H^D_{(c,c+\delta)}$
which differs from $H^D_{(a,c+\delta)}$ by a rank one resolvent perturbation. Hence
\[
N_{\ti{H}^D_{(a,c+\delta)}}(R) = N_{H^D_{(a,c)}}(R) + N_{H^D_{(c,c+\delta)}}(R) = N_{H^D_{(a,c)}}(R) + \frac{\delta}{\pi} R^{1/2} + o(R^{1/2})
\]
by \eqref{weylreg} since the points $c$ and $c+\delta$ are regular.

Similarly, the last claim of the first part follows using $\tilde{H}_{(a,c)}^D:=H^D_{(a,a+\eps)}\oplus H^D_{(a+\eps,c)}$ and
\[
N_{\ti{H}^D_{(a,c)}}(R) \ge N_{H^D_{(a+\eps,c)}}(R) = \frac{c-a-\eps}{\pi} R^{1/2} + o(R^{1/2})
\]
since if \eqref{estorder} holds for $s=\frac{1}{2}$, then $\int_1^\infty R^{-3/2} N_{H^D_{(a,c)}}(R) dR< \infty$ by \cite[Lem.~3.2.1]{lev}.

The second part follows analogously.
\end{proof}

Now we are ready to draw some first conclusions for $\phi(z,x)$.

\begin{lemma}\label{lem:phiev}
Suppose Hypothesis~\ref{hyp:ev} (i) holds.
Then there is a real entire solution $\phi(z,x)$ of growth order $\frac{1}{2}$. This solution is unique
up to a constant and given by
\be\label{phiprod}
\phi(z,x) =  \phi(0,x) \prod_{j\in\N} \Big(1-\frac{z}{\mu_j(x)}\Big)
\ee
(if $\mu_{j_0}(x)=0$ one has to replace $\phi(0,x)(1-z/\mu_{j_0}(x))$ by $\dot{\phi}(0,x) z$, where
the dot indicates a derivative with respect to $z$).
Furthermore,
\be\label{phias}
\phi(z,x) = \phi(z,x_0) \E^{-(x-x_0) \sqrt{-z}} (1 + O(1/\sqrt{-z})), \qquad x_0,x \in (a,b),
\ee
as $|z|\to\infty$ along any nonreal ray.
\end{lemma}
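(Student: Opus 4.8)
The plan is to establish the three claims of Lemma~\ref{lem:phiev} in turn: the existence and product representation \eqref{phiprod} of a growth-order-$\tfrac12$ solution, its uniqueness, and the exponential asymptotics \eqref{phias}. The starting point is Lemma~\ref{lem:phi}, which already gives us a real entire solution $\phi(z,x)$ in the domain of $H$ near $a$; we only need to understand its growth and normalize it appropriately. Fix a base point $x\in(a,b)$ and view $\phi(z,x)$ as an entire function of $z$. Since $\phi(z,x)$ is (up to a nonvanishing entire factor, by Corollary~\ref{coruniqphi}) equal to $\alpha(z)u_-(z,x)$ where $\alpha$ has simple zeros exactly at the poles of $m_-(z) = -\beta(z)/\alpha(z)$, and since these poles are precisely the Dirichlet eigenvalues $\mu_j(x)$ of $H^D_{(a,x)}$ — here I use that the Weyl solution $u_-(z,x)$, as a function of $z$, vanishes exactly when $z\in\sig(H^D_{(a,x)})$ (this is the characterization of Dirichlet eigenvalues of the left operator with base point $x$) — the zeros of $\phi(z,x)$ as a function of $z$ are exactly the $\mu_j(x)$, all simple.

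**Next I would** pin down the growth order. By Hypothesis~\ref{hyp:ev}(i) and Lemma~\ref{lem:hic}, the convergence exponent of $\{\mu_j(x)\}$ is exactly $\tfrac12$ (the sum converges for all $s>\tfrac12$ but for no $s\le\tfrac12$). Since the $\mu_j(x)$ are bounded below, we may apply the Hadamard factorization theorem: an entire function of order $\le 1$ with a zero set of convergence exponent $\tfrac12 < 1$ and genus $0$ is, up to a factor $e^{az+b}$, the canonical product $\prod_j(1-z/\mu_j(x))$. The canonical product itself has order equal to the convergence exponent, namely $\tfrac12$. Thus the minimal-order choice is obtained by killing the exponential factor, which forces the representation \eqref{phiprod} with normalizing constant $\phi(0,x)$ (the zero-at-origin case handled as stated), and this solution has growth order exactly $\tfrac12$. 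Uniqueness up to a constant follows from Corollary~\ref{coruniqphi}: any other admissible $\ti\phi(z,x) = e^{g(z)}\phi(z,x)$ with $g$ real entire has order $\ge$ order of $e^{g}$; to keep order $\tfrac12$ while $\phi$ already has order $\tfrac12$ we need $e^{g(z)}$ of order $\le\tfrac12$, hence $g$ polynomial of degree $\le\tfrac12$, i.e.\ $g$ constant.

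**For the asymptotics \eqref{phias}**, I would proceed by a Volterra/variation-of-parameters argument comparing $\phi$ with the solution of the free equation $-u''=zu$. Fix $x_0$; the solution $u(z,\cdot)$ of $\tau u = zu$ with initial data $u(z,x_0)=\phi(z,x_0)$, $u'(z,x_0)=\phi'(z,x_0)$ satisfies the integral equation obtained by using $\E^{\pm(x-x_0)\sqrt{-z}}$ as a fundamental system for the free operator and treating $q$ as an inhomogeneity; a standard iteration on any compact $[x_0,x]\subset(a,b)$ shows $u(z,x) = c_+(z)\E^{-(x-x_0)\sqrt{-z}} + c_-(z)\E^{(x-x_0)\sqrt{-z}} + (\text{error})$, with $c_\pm$ determined by the initial data and with the error $O(\sqrt{-z}^{-1})$ relative to the leading term, uniformly as $|z|\to\infty$ along nonreal rays (where $\re\sqrt{-z}>0$). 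The point is then to show the growing coefficient $c_-(z)$ vanishes: this is exactly the statement that $\phi(z,\cdot)$ is (up to normalization) $u_-(z,\cdot)$, the decaying Weyl solution at $a$, so on the side toward $a$ it cannot contain the exponentially growing piece — the square-integrability/limit-point-or-limit-circle behavior near $a$ forbids it. Equivalently, one uses that $u_-(z,x) = c(z,x) - m_-(z)s(z,x)$ with $m_-(z) = -\sqrt{-z}+O(1)$ (the asymptotics invoked in the Remark after Hypothesis~\ref{hyp:ev}), which precisely arranges the cancellation of the $\E^{+x\sqrt{-z}}$ term. Matching the surviving term at $x$ and at $x_0$ gives the ratio $\phi(z,x)/\phi(z,x_0) = \E^{-(x-x_0)\sqrt{-z}}(1+O(1/\sqrt{-z}))$.

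**The main obstacle** I anticipate is making the asymptotic step genuinely uniform along nonreal rays, including controlling the iteration near the singular endpoint $a$ rather than on a fixed compact subinterval: one needs the $O(1/\sqrt{-z})$ error bound to survive as the left integration limit approaches $a$, which requires care with the potentially non-integrable $q$ near $a$. The clean way around this is to prove \eqref{phias} first for $x,x_0$ in a fixed compact subinterval using the classical iteration estimate (the error being controlled by $\exp(|\re\sqrt{-z}|^{-1}\int_{x_0}^x|q|)-1 = O(1/\sqrt{-z})$), and then observe that since the relation $\phi(z,x)\E^{x\sqrt{-z}}$ is, up to the established normalization, independent of the particular compact interval used, the asymptotic relation propagates to all $x_0,x\in(a,b)$; the identification $c_-(z)\equiv 0$ is what transfers the local estimate to the semi-infinite behavior toward $a$ and is the only genuinely non-routine input, resting on $m_-(z)=-\sqrt{-z}+O(1)$.
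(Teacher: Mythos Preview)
Your existence argument has a genuine gap. You start from an arbitrary $\phi$ produced by Lemma~\ref{lem:phi} and then invoke Hadamard factorization for ``an entire function of order $\le 1$'', but Lemma~\ref{lem:phi} gives you no control whatsoever on the order of that $\phi$: its proof uses a Weierstra{\ss} product for $\alpha(z)$, which may well have infinite order. So you cannot apply Hadamard to $\phi(\cdot,x)$, and you cannot simply ``kill the exponential factor'' either, because you have not shown that the canonical product $\phi(0,x)\prod_j(1-z/\mu_j(x))$ is itself a \emph{solution} of $\tau u = z u$. Knowing its zero set and order does not make it a solution; you would need the Hadamard exponential factor $\E^{h(z,x)}$ in $\phi(z,x)=\E^{h(z,x)}\phi(0,x)\prod_j(1-z/\mu_j(x))$ to be independent of $x$, which you never verify.

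The paper closes this gap by an explicit construction at a fixed base point $c$: take $\alpha(z)=\prod_j(1-z/\mu_j(c))$ and $\beta(z)=-C'\prod_j(1-z/\nu_j(c))$, with $C'$ fixed by Krein's product formula \eqref{mmkrein} so that $-\beta/\alpha=m_-$. Then $\phi(z,x)=\alpha(z)c(z,x)+\beta(z)s(z,x)=\alpha(z)u_-(z,x)$ is automatically a solution in the domain of $H$ near $a$, and it has order at most $\tfrac12$ because $\alpha,\beta$ do (Borel's theorem, using Hypothesis~\ref{hyp:ev}(i) for both the Dirichlet and Neumann spectra via Lemma~\ref{lem:hic}) and so do $c(z,x),s(z,x)$. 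Only \emph{then} does Hadamard give the product \eqref{phiprod} and uniqueness up to a constant.

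For the asymptotics \eqref{phias} your Volterra iteration detour is unnecessary and your worry about ``controlling the iteration near the singular endpoint $a$'' is misplaced: the statement concerns fixed $x_0,x\in(a,b)$, not a limit $x_0\to a$. The paper simply writes $\phi(z,x)=\phi(z,c)\big(c(z,x)-m_-(z)s(z,x)\big)$ and reads off \eqref{phias} from the standard asymptotics of $c(z,x)$, $s(z,x)$, and $m_-(z)=-\sqrt{-z}+O(1)$ (which you yourself cite at the end). That one line replaces your entire third and fourth paragraphs.
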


\begin{proof}
By Lemma~\ref{lem:phi} we already know that there is an entire solution $\phi(z,x)$. Moreover,
choosing
\be
\alpha(z) = \prod_{j\in\N} \Big(1-\frac{z}{\mu_j(c)}\Big), \qquad \beta(z) = -C'\prod_{j\in\N_0} \Big(1-\frac{z}{\nu_j(c)}\Big)
\ee
in \eqref{phiab}, where $C'$ is chosen such that \eqref{mmkrein} holds, we see that there is an entire solution of growth order at
most $1/2$ since $\alpha(z)$, $\beta(z)$ are of growth order $1/2$ by Borel's theorem (\cite[Thm.~4.3.3]{lev}) and the same is true
for $c(z,x)$, $s(z,x)$ by \cite[Lem.~9.18]{tschroe}. Moreover, any solution of growth order at most $1/2$ is uniquely determined
by its zeros and its value (respectively its derivative, if $\phi(0,x)=0$) at $0$ by the Hadamard product theorem
\cite[Thm.~4.2.1]{lev}. But the zeros of $\phi(z,x)$ are precisely the eigenvalues $\mu_j(x)$ of $H^D_{(a,x)}$ and all of
them are simple since the Herglotz--Nevanlinna function $m_-(z,x)= \phi'(z,x) / \phi(z,x)$ has a first-order pole at
every eigenvalue. Thus any entire solution $\phi(z,x)$ of growth order at most $1/2$ is given by \eqref{phiprod}. By
Lemma~\ref{lem:hic} and Borel's theorem the growth order of \eqref{phiprod} is equal to $1/2$.

Finally,
\[
\phi(z,x) = \phi(z,c) ( c(z,x) - m_-(z) s(z,x) )
\]
and using the well-known asymptotics of $c(z,x)$, $s(z,x)$, and $m_-(z)$ (cf.\ \cite[Lem.~9.18 and Lem.~9.19]{tschroe})
we see \eqref{phias} for $x_0=c$ and $x>x_0$. The case $x<x_0$ follows after reversing the roles of $x_0$ and $x$.
Since $c$ is arbitrary the proof is complete.
\end{proof}

Note that we also have
\be\label{phipprod}
\phi'(z,x) =  \phi'(0,x) \prod_{j\in\N} \Big(1-\frac{z}{\nu_j(x)}\Big)
\ee
(if $\nu_{j_0}(x)=0$ one has to replace $\phi'(0,x)(1-z/\nu_{j_0}(x))$ by $\dot{\phi}'(0,x) z$).

\begin{lemma}\label{lem:phiev2}
Suppose Hypothesis~\ref{hyp:ev} (i') holds. Then
\be\label{phias2}
\phi(z,x) = \E^{-(\Delta_0+x-a) \sqrt{-z} + o(z/\im(\sqrt{-z}))}
\ee
as $|z|\to\infty$ along any ray except the positive real axis.
\end{lemma}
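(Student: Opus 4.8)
The plan is to read off the exponential rate of $\phi(z,x)$ from its Hadamard product. Since Hypothesis~\ref{hyp:ev}~(i') implies (i), Lemma~\ref{lem:phiev} gives the genus-zero canonical product $\phi(z,x)=\phi(0,x)\prod_{j\in\N}\bigl(1-z/\mu_j(x)\bigr)$ (with the evident change if $0\in\sig(H^D_{(a,x)})$). Using $\log\bigl(1-z/\mu_j(x)\bigr)=-z\int_{\mu_j(x)}^\infty\frac{dt}{t(t-z)}$ for $\mu_j(x)>0$ and summing over $j$ --- equivalently, integrating by parts against the eigenvalue counting function --- one gets, for $z$ away from $[\mu_1(x),\infty)$,
\be\label{eqlogphi}
\log\phi(z,x)=\log\phi(0,x)-z\int_0^\infty\frac{N_{H^D_{(a,x)}}(t)}{t(t-z)}\,dt+O(\log|z|),
\ee
where the $O(\log|z|)$ absorbs the finitely many non-positive eigenvalues and the integral converges absolutely since $N_{H^D_{(a,x)}}(t)=O(\sqrt t)$ and vanishes for small $t$. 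Thus the whole exponential behaviour is concentrated in a single integral.

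Next I would insert the asymptotics equivalent to (i'), namely $N_{H^D_{(a,x)}}(t)=\frac{\Delta(x)}{\pi}\sqrt t+\eps(t)$ with $\eps(t)=o(\sqrt t)$ and $\Delta(x)=\Delta_0+x-a$ (Lemma~\ref{lem:hic}), and split \eqref{eqlogphi} accordingly. The principal part is the elementary integral $-z\frac{\Delta(x)}{\pi}\int_0^\infty\frac{dt}{\sqrt t\,(t-z)}$, which (substitution $t=s^2$) has a closed form and yields precisely the leading term $-(\Delta_0+x-a)\sqrt{-z}$; the constant $\log\phi(0,x)$ and the $O(\log|z|)$ are $o\bigl(z/\im(\sqrt{-z})\bigr)$ on any nonreal ray, so they are harmless.

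The substance of the proof is the remainder estimate
\be\label{eqrem}
\Bigl|\,z\int_0^\infty\frac{\eps(t)}{t(t-z)}\,dt\,\Bigr|=o\Bigl(\frac{z}{\im(\sqrt{-z})}\Bigr)\qquad\text{as }|z|\to\infty
\ee
along any ray $z=|z|\E^{\I\ta}$ with $\ta\neq0$. For this I would use an $\eta$--$T_0$ splitting: given $\eta>0$, fix $T_0$ with $|\eps(t)|\le\eta\sqrt t$ for $t\ge T_0$. The part over $[0,T_0]$ is $O(1)$, since $\mathrm{dist}\bigl(z,[0,T_0]\bigr)\ge|z|-T_0$ while $\int_0^{T_0}\frac{|\eps(t)|}{t}\,dt<\infty$; the part over $[T_0,\infty)$ is at most $\eta\,|z|\int_0^\infty\frac{dt}{\sqrt t\,|t-z|}$, and the rescaling $t=|z|s$ shows $\int_0^\infty\frac{dt}{\sqrt t\,|t-z|}=C_\ta\,|z|^{-1/2}$ with $C_\ta<\infty$ for every $\ta\neq0$. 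Hence the left-hand side of \eqref{eqrem} is $\le\eta\,C_\ta\,|z|^{1/2}+O(1)$ for all $\eta>0$, so it is $o(|z|^{1/2})$; since $|z/\im(\sqrt{-z})|$ is bounded below by $|z|^{1/2}$ on each such ray, \eqref{eqrem} follows.

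The main obstacle is exactly this last step. Because (i') supplies the error only in the qualitative form $\eps(t)=o(\sqrt t)$, the cut-off $T_0$ has to be fixed first in terms of the tolerance $\eta$, and only afterwards may $|z|\to\infty$ --- it cannot be allowed to depend on $|z|$. The blow-up $C_\ta\to\infty$ as $\ta\to0$ is precisely what forces the exclusion of the positive real axis, and is mirrored by $\im(\sqrt{-z})\to0$ in the error. Everything else --- absolute convergence in \eqref{eqlogphi}, the interchange of summation and integration, and the closed form of $\int_0^\infty\frac{dt}{\sqrt t\,(t-z)}$ --- is routine. (Alternatively, one can prove the estimate first for a single base point $x=c$ by analysing $\alpha(z)=\phi(z,c)=\prod_j(1-z/\mu_j(c))$ as above and then transfer it to all $x$ via \eqref{phias}, using that the $O(1/\sqrt{-z})$ there is locally uniform in $x$.)
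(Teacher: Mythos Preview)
Your argument is correct and follows the same route as the paper, which simply invokes \cite[Thm.~12.1.1]{lev}: that theorem gives the asymptotics of a genus-zero canonical product with positive zeros from the counting-function asymptotics $N(t)=\frac{\Delta}{\pi}\sqrt t+o(\sqrt t)$, and your integral representation of $\log\phi$ together with the $\eta$--$T_0$ splitting is precisely how that theorem is proved. One small slip in the commentary (not in the proof): $\im(\sqrt{-z})\to0$ as the ray approaches the \emph{negative} real axis ($\ta\to\pi$), not the positive one, so the form of the error term is not what absorbs the blow-up of $C_\ta$ at $\ta\to0$; rather, your estimate $o(|z|^{1/2})$ on each ray $\ta\neq0$ is already stronger than the stated $o(z/\im(\sqrt{-z}))$ because $|z/\im(\sqrt{-z})|\ge|z|^{1/2}$, which is all that is needed.
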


\begin{proof}
The claim follows from \eqref{eqwa} and \cite[Thm.~12.1.1]{lev}.
\end{proof}

Next, we establish a practical condition which ensures that the first part of our hypothesis holds.

\begin{lemma}\label{lem:wa}
Suppose $a\in\R$ and $q(x)$ satisfies
\be\label{eq:knez}
\min(q(x),0) \ge -\frac{C}{(x-a)^2} - Q(x)
\ee
with $C<\frac{1}{4}$ and $(x-a) Q(x) \in L^1(a,c)$ or $C=\frac{1}{4}$ and $(x-a) (1-\log(x-a)) Q(x) \in L^1(a,c)$.

Then Hypothesis~\ref{hyp:ev} (i') holds with $\Delta(x)=x-a$.
\end{lemma}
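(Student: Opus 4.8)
The plan is to verify the two requirements of Hypothesis~\ref{hyp:ev}~(i')---that $H^D_{(a,c)}$ (equivalently $H^N_{(a,c)}$) is bounded below with purely discrete spectrum, and that $N_{H^D_{(a,c)}}(R)=\frac{c-a}{\pi}R^{1/2}+o(R^{1/2})$---by combining a Hardy-type form bound (to absorb the inverse-square term), a localization near $a$ (to absorb $Q$), and Dirichlet--Neumann bracketing against the explicitly solvable Bessel operator of Section~\ref{sec:ex}. By Lemma~\ref{lem:hic} it suffices to treat one value of $c$, and since $\Delta(x)=\Delta_0+x-a$, the assertion $\Delta(x)=x-a$ is exactly $\Delta_0=0$, i.e.\ that the singular endpoint does not contribute to the leading term. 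Since changing the boundary condition at $a$ is a rank-one resolvent perturbation (hence alters the counting functions by at most one) we may work throughout with the Friedrichs realization at $a$, and we may assume $Q\ge0$ (replace $Q$ by $\max(Q,0)$). Write $\mathfrak{q}_H[\,\cdot\,]$ for the quadratic form of a self-adjoint operator $H$.

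First I would establish the form bound. By \eqref{eq:knez}, $\mathfrak{q}_{H^D_{(a,c)}}[f]\ge\int_a^c\big(|f'|^2-\tfrac{C}{(x-a)^2}|f|^2-Q|f|^2\big)$. When $C<\tfrac14$ the classical Hardy inequality gives $\int_a^c\big(|f'|^2-\tfrac{C}{(x-a)^2}|f|^2\big)\ge(1-4C)\int_a^c|f'|^2$, so every form-domain function lies in $H^1(a,c)$, and from $|f(x)|^2\le(x-a)\int_a^x|f'|^2$ we get $\int_a^{a+\eps}Q|f|^2\le\big(\int_a^{a+\eps}(s-a)Q(s)\,ds\big)\int_a^c|f'|^2$; since $(x-a)Q\in L^1$ the prefactor tends to $0$ as $\eps\downarrow0$, while on $(a+\eps,c)$ the potential $Q\in L^1$ is form-compact. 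When $C=\tfrac14$ one uses instead the logarithmically refined Hardy inequality $\int_a^c|f'|^2-\tfrac14\int_a^c\tfrac{|f|^2}{(x-a)^2}\ge\tfrac14\int_a^c\tfrac{|f|^2}{(x-a)^2(\log\frac{\ell}{x-a})^2}\,dx$ (with $\ell$ so large that $\log\frac{\ell}{x-a}\ge1$ on $(a,c)$), together with the pointwise bound obtained from the ground-state substitution $f=\sqrt{x-a}\,g$ and Cauchy--Schwarz with weight $\tfrac1{s-a}$, namely $|f(x)|^2\le C_0(x-a)\log\tfrac{\ell}{x-a}\,(\mathfrak{q}_{1/4}[f]+\|f\|^2)$; since $(x-a)\log\tfrac{\ell}{x-a}\le C_1(x-a)(1-\log(x-a))$ near $a$, the hypothesis $(x-a)(1-\log(x-a))Q\in L^1$ again makes the contribution of $Q$ near $a$ relatively form-small. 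In both cases this yields $H^D_{(a,c)}\ge c_1\,H_{l',(a,c)}-c_2$ with $c_1>0$, $c_2\ge0$, where $H_{l'}=-\frac{d^2}{dx^2}-\frac{C'}{(x-a)^2}$ is a Bessel operator with $C'\le\tfrac14$ ($C'<\tfrac14$ if $C<\tfrac14$); the latter is bounded below with compact resolvent (its form domain embeds compactly into $L^2(a,c)$ by the Hardy control near $a$ and the compact Sobolev embedding on $(a+\eps,c)$), hence so is $H^D_{(a,c)}$.

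For the asymptotics I would decouple at an interior point $a+\eps$. Imposing a Dirichlet condition there gives $N_{H^D_{(a,c)}}(R)\ge N_{H^D_{(a+\eps,c)}}(R)$, so the regular Weyl law \eqref{weylreg} on $(a+\eps,c)$ yields $\liminf_{R\to\infty}R^{-1/2}N_{H^D_{(a,c)}}(R)\ge\frac{c-a-\eps}{\pi}$, whence $\ge\frac{c-a}{\pi}$ on letting $\eps\downarrow0$. Imposing a Neumann condition there gives $N_{H^D_{(a,c)}}(R)\le N_{H^N_{(a,a+\eps)}}(R)+N_{H^N_{(a+\eps,c)}}(R)$. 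On the singular piece the localized form estimates above read $\mathfrak{q}_{H^N_{(a,a+\eps)}}[f]\ge c_1(\eps)\,\mathfrak{q}_{H_{l'},(a,a+\eps)}^{N}[f]-c_2(\eps)\|f\|^2$ with $c_1(\eps)\to1$ (and an inclusion of form domains, since the two forms differ by a nonnegative plus a form-compact term), so by the min-max principle $N_{H^N_{(a,a+\eps)}}(R)\le N_{H_{l'},(a,a+\eps)}^{N}\!\big(\tfrac{R+c_2(\eps)}{c_1(\eps)}\big)$. From the explicit spectral data of the Bessel operator in Section~\ref{sec:ex}, its Dirichlet and Neumann eigenvalues on $(a,a+\eps)$ are, up to bounded shifts, squares of zeros of $J_{l'+1/2}$ (resp.\ of $J_{l'+1/2}'$) divided by $\eps^2$, so $N_{H_{l'},(a,a+\eps)}^{N}(\tilde R)=\frac{\eps}{\pi}\tilde R^{1/2}+O(1)$; hence $\limsup_{R\to\infty}R^{-1/2}N_{H^N_{(a,a+\eps)}}(R)\le\frac{\eps}{\pi\sqrt{c_1(\eps)}}$. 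Combining with \eqref{weylreg} on $(a+\eps,c)$ and letting $\eps\downarrow0$ gives $\limsup_{R\to\infty}R^{-1/2}N_{H^D_{(a,c)}}(R)\le\frac{c-a}{\pi}$, which together with the lower bound is \eqref{eqwa} with $\Delta(c)=c-a$.

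The hard part is the borderline case $C=\tfrac14$: here the plain Hardy inequality leaves no spectral gap, so one must invoke its logarithmic refinement and, matching it, identify the exact admissible class of $Q$---which is precisely why the weight $(x-a)(1-\log(x-a))$ appears in the hypothesis, via the pointwise estimate coming from the ground-state transform---while being careful that the Bessel comparison never pushes the coupling constant past $\tfrac14$, where semiboundedness would fail. The remaining steps are a routine assembly of Hardy inequalities, Dirichlet--Neumann bracketing, and the explicitly known Bessel spectrum.
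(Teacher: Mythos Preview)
Your argument is correct in outline, but it takes a genuinely different and more laborious route than the paper's. The paper does not build any form bounds or invoke the logarithmic Hardy inequality; instead it compares $H$ directly with the single operator $\hat{H}$ having potential $\hat{q}(x)=-C(x-a)^{-2}-Q(x)$, for which semiboundedness, discreteness of the spectrum, and the Weyl asymptotic $\hat{N}_{(a,a+\eps)}(R)=\frac{\eps}{\pi}R^{1/2}+o(R^{1/2})$ are taken as black boxes from \cite[Thms.~2.4, 2.5]{kst}. Then $q\ge\hat{q}$ and the min--max principle give $N_{(a,a+\eps)}\le\hat{N}_{(a,a+\eps)}$, while the trivial bound $N_{(a,a+\eps)}\ge0$ supplies the other direction. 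The key point you may have missed is that the Dirichlet decoupling at $a+\eps$ is not merely an inequality but an \emph{equality up to $O(1)$} (rank-one resolvent difference), so Neumann bracketing is unnecessary: one gets both the upper and lower bounds on $N_{(a,c)}$ from the Dirichlet split alone.

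What your approach buys is self-containment: you avoid the external citation to \cite{kst} by reducing everything to the explicit Bessel spectrum and Hardy-type inequalities. The price is exactly the delicacy you flag at $C=\tfrac14$, where your scheme forces you through the logarithmic Hardy inequality and a ground-state pointwise estimate to keep the effective coupling from crossing $\tfrac14$; the paper sidesteps this entirely because the perturbation $Q$ is left inside the comparison operator $\hat{H}$ rather than absorbed. One small wrinkle: your remark that ``changing the boundary condition at $a$ is a rank-one resolvent perturbation'' is only meaningful when $a$ is limit circle; in the limit-point case there is nothing to change. This is harmless but worth stating more carefully.
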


\begin{proof}
Denote the right-hand side of \eqref{eq:knez} by $\hat{q}(x):=-C(x-a)^2-Q(x)$.
First of all note that the operator $\hat{H}$ associated with $\hat{q}$ has purely discrete spectrum and is bounded from below,
by \cite[Thm.~2.4]{kst} (see also Section~\ref{sec:ap}). In particular,  the associated differential equation $\hat{\tau}-\lam$
is nonoscillatory for every $\lam\in\R$. Hence the same is true for $\tau-\lam$ by Sturm's comparison theorem and thus $H$
has no essential spectrum and is bounded from below (cf.\ \cite[Thm.~14.9]{wdln}).

Next, we can again replace  $H^D_{(a,c)}$ by $\tilde{H}_{(a,c)}^D:=H^D_{(a,a+\eps)}\oplus H^D_{(a+\eps,c)}$ for
any $\eps\in(0,c-a)$ since both operators differ by a rank one resolvent perturbation.
Denote by $\ti{N}_{(a,c)}(R)$, $N_{(a,a+\eps)}(R)$, and $N_{(a+\eps,c)}(R)$ the corresponding eigenvalue
counting functions. Then,
\[
\ti{N}_{(a,c)}(R) = N_{(a,a+\eps)}(R) + N_{(a+\eps,c)}(R) = \frac{c-a}{\pi} R^{1/2} + N_\eps(R) + o(R^{1/2})
\]
with
\[
N_\eps(R)= N_{(a,a+\eps)}(R) - \frac{\eps}{\pi} R^{1/2}
\]
by \eqref{weylreg} since the points $a+\eps$ and $c$ are regular. Next, note that the eigenvalue counting function associated with
$\hat{H}^D_{(a,a+\eps)}$ satisfies
\[
\hat{N}_{(a,a+\eps)}(R) =  \frac{\eps}{\pi} R^{1/2} + o(R^{1/2})
\]
by \cite[Thm.~2.5]{kst} (see also Section \ref{sec:ap}). Since $q(x) \ge \hat{q}(x)$ by assumption, the min--max principle implies
\[
N_{(a,a+\eps)}(R) \leq \hat{N}_{(a,a+\eps)}(R)
\]
and we obtain
\begin{align*}
-\frac{\eps}{\pi} R^{1/2} \le N_\eps(R) \le o(R^{1/2})
\end{align*}
for any $\eps\in(0,c-a)$. Hence $N_\eps(R) = o(R^{1/2})$ and \eqref{eqwa} holds with $\Delta(x)=x-a$. This is equivalent to (i') as noted earlier.
\end{proof}

\begin{remark}
This condition is not far from being optimal since any potential $q(x)$ satisfying
\[
q(x) \le -\frac{C}{(x-a)^2}
\]
for some $C>\frac{1}{4}$ fails to be bounded from below. To see this observe that
the potential $-C (x-a)^{-2}$ is oscillatory near $a$ and thus the same is true
for any potential below it by Sturm's comparison theorem.
\end{remark}

In order to prove our next lemma we will need the following version of the corona theorem for entire
functions:

\begin{theorem}[\cite{hor}]\label{thm:cor}
Let $R_s(\C)$, $s> 0$, be the ring of all entire functions $f(z)$ for which there are constants $A,B >0$ such that
\be
|f(z)| \le B \exp(A |z|^s).
\ee
Then $f_j(z)$, $j=1,\dots,n$ generate $R_s(\C)$ if and only if
\be
|f_1(z)| + \cdots + |f_n(z)| \ge b \exp(-a |z|^s)
\ee
for some constants $a,b>0$.
\end{theorem}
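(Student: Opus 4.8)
The plan is to split the statement into its two implications. Necessity is a one-line triangle-inequality argument: if $\sum_{j=1}^n f_jg_j=1$ with $g_j\in R_s(\C)$ and $|g_j(z)|\le B'\exp(A'|z|^s)$, then $1\le nB'\exp(A'|z|^s)\max_j|f_j(z)|$, which is the asserted lower bound with $b=(nB')^{-1}$, $a=A'$. So the real content is the sufficiency, and I would prove it by the standard reduction of a corona problem to a $\bar\partial$-problem.

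First I would set $|f(z)|^2=\sum_{k=1}^n|f_k(z)|^2$ and introduce the smooth (non-holomorphic) solution $\psi_j=\bar{f_j}/|f|^2$, so that $\sum_j f_j\psi_j\equiv1$. The lower bound guarantees that $|f|$ never vanishes, and combining $|f_k|\le B\exp(A|z|^s)$, $|f|\ge b\exp(-a|z|^s)$ with the Cauchy estimate (which shows $f_k'\in R_s(\C)$ as well) gives $|\psi_j|+|\bar\partial\psi_j|\le C\exp(C|z|^s)$ for a suitable $C$. Differentiating $\sum_j f_j\psi_j=1$ yields $\sum_j f_j\,\bar\partial\psi_j=0$, hence the algebraic identity $\bar\partial\psi_j=\sum_k b_{jk}f_k$ with $b_{jk}:=\psi_k\,\bar\partial\psi_j-\psi_j\,\bar\partial\psi_k$; the $b_{jk}$ are antisymmetric in $(j,k)$ and obey the same exponential bound. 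I would then correct $\psi_j$ to an entire $g_j$ by setting $g_j=\psi_j-\sum_k a_{jk}f_k$, where $a_{kj}=-a_{jk}$ is chosen with $\bar\partial a_{jk}=b_{jk}$: antisymmetry forces $\sum_jf_jg_j=\sum_jf_j\psi_j=1$, while $\bar\partial g_j=\bar\partial\psi_j-\sum_k b_{jk}f_k=0$ makes $g_j$ entire.

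The analytic heart is solving $\bar\partial a_{jk}=b_{jk}$ with growth control, for which I would invoke H\"ormander's weighted $L^2$ estimate for $\bar\partial$ in $\C$: for a strictly subharmonic weight $\varphi$ there is a solution with $\int_\C|a_{jk}|^2\E^{-\varphi}\,dA\le\int_\C|b_{jk}|^2(\Delta\varphi)^{-1}\E^{-\varphi}\,dA$. Taking $\varphi(z)=A_1|z|^s$, modified near the origin so as to be $C^2$ and strictly subharmonic and with $A_1>2C$, the factor $\E^{-A_1|z|^s}$ dominates $|b_{jk}|^2\le\E^{2C|z|^s}$ and makes the right-hand side finite, so each $a_{jk}$, and hence each $g_j$, lies in $L^2(\C,\E^{-\varphi}dA)$. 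Since $g_j$ is entire, the sub-mean-value inequality over the unit disc centered at $z$ turns this into a pointwise bound $|g_j(z)|^2\le C''\E^{\sup_{|w-z|\le1}\varphi(w)}\le C'''\E^{A_12^s|z|^s}$, so $g_j\in R_s(\C)$, which completes the argument.

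I expect the only delicate point to be the choice and handling of the weight $\varphi$: it must simultaneously dominate the growth $\E^{2C|z|^s}$ of the data (so the $L^2$ integrals converge), remain strictly subharmonic with $\Delta\varphi$ under control where it is used (this is where one modifies $|z|^s$ near the origin and genuinely uses subharmonicity), and stay of order exactly $s$ (so that the $L^2\to L^\infty$ passage does not spoil $R_s$-membership). Everything else---the algebraic identities for the $b_{jk}$, the antisymmetrization trick, and the mean-value estimate---is routine.
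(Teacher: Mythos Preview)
Your sketch is essentially correct and follows the standard corona-via-$\bar\partial$ route, which is in fact how H\"ormander himself proves the general result in the cited paper. The paper, however, does not reprove the theorem at all: its entire proof consists of invoking Theorem~1 of \cite{hor} with the specialization $\Omega=\C$, $p(z)=|z|^s$, and appropriate constants $K_j$. So your approach is not so much different from the paper's as it is different in depth: the paper treats the statement as a black box and only records the parameter choices needed to specialize H\"ormander's general theorem, while you outline the mechanism behind that theorem.

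One small point worth tightening in your sketch: the passage ``each $a_{jk}$, and hence each $g_j$, lies in $L^2(\C,\E^{-\varphi}dA)$'' is not quite immediate, since $g_j=\psi_j-\sum_k a_{jk}f_k$ and the factors $f_k$ contribute an extra $\E^{A|z|^s}$ that has to be absorbed. The standard fix is to solve $\bar\partial a_{jk}=b_{jk}$ with a first weight $\varphi_1$ of order $s$ large enough to control $|b_{jk}|^2$, and then test $g_j$ against a second, larger weight $\varphi_2=\varphi_1+2A|z|^s$ (still of order $s$) so that $\int|a_{jk}f_k|^2\E^{-\varphi_2}\le B^2\int|a_{jk}|^2\E^{-\varphi_1}<\infty$; your mean-value step then applies with $\varphi_2$. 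This is routine bookkeeping and does not affect the substance of your argument.
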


\begin{proof}
This follows from Theorem~1 of \cite{hor} upon choosing $\Omega=\C$, $p(z)=|z|^s$,
$K_1=K_2=0$, $K_2=K_4=\max(1,2^{s-1})$.
\end{proof}

Moreover, we will need the following technical estimate for Hadamard products.

\begin{lemma}\label{lemesthp}
Let $\{\zeta_j\}_{j=1}^\infty \subseteq \C\backslash\{0\}$ be a sequence with no finite accumulation point
and with growth exponent $\rho<1$. Let $r>0$. Then for every $s$ with $\rho< s < 1$ there are
constants $A$ and $B$ (depending on $s$ and $r$) such that the Hadamard product satisfies
\begin{align}\label{estcp}
\Big|\prod_{j=1}^\infty \Big(1-\frac{z}{\zeta_j}\Big) \Big| \ge B \exp(-A |z|^s),
\end{align}
except possibly when $z$ belongs to one of the discs $|z-\zeta_j|<|\zeta_j|^{-r}$.
\end{lemma}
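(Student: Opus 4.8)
The plan is to estimate the modulus of the Hadamard product from below by splitting the factors according to the size of $|\zeta_j|$ relative to $|z|$, a classical technique (Levin, Boas). Write $P(z)=\prod_j(1-z/\zeta_j)$. Fix $s$ with $\rho<s<1$ and set $R=|z|$. Split the index set into $S_1=\{j:|\zeta_j|\le 2R\}$ and $S_2=\{j:|\zeta_j|>2R\}$, and estimate $|P(z)|=\prod_{S_1}|1-z/\zeta_j|\cdot\prod_{S_2}|1-z/\zeta_j|$.

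For the far factors ($j\in S_2$) there is no cancellation: $|1-z/\zeta_j|\ge 1-|z|/|\zeta_j|\ge 1-R/|\zeta_j|$, and since $|\zeta_j|>2R$ the ratio $R/|\zeta_j|<1/2$, so $\log|1-z/\zeta_j|\ge -c R/|\zeta_j|$ for a universal $c$. Summing, $\sum_{S_2}R/|\zeta_j|$ is controlled by the convergence exponent: since $\rho<s$, the counting function $n(t)=\#\{|\zeta_j|\le t\}$ satisfies $n(t)=O(t^s)$, and a standard integration by parts gives $R\sum_{|\zeta_j|>2R}|\zeta_j|^{-1}=O(R^s)$. Hence $\prod_{S_2}|1-z/\zeta_j|\ge B_2\exp(-A_2 R^s)$.

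For the near factors ($j\in S_1$) we must avoid the exceptional discs. If $z$ lies outside every disc $|z-\zeta_j|<|\zeta_j|^{-r}$, then for $j\in S_1$ we have $|1-z/\zeta_j|=|z-\zeta_j|/|\zeta_j|\ge |\zeta_j|^{-r-1}\ge (2R)^{-r-1}$, so each near factor is bounded below by a negative power of $R$. The number of near factors is $|S_1|=n(2R)=O(R^s)$. Therefore $\log\prod_{S_1}|1-z/\zeta_j|\ge -(r+1)\log(2R)\cdot n(2R)\ge -A_1 R^s$, absorbing the logarithm into a slightly larger constant (or into any exponent $>s$, but since we are free to enlarge $s$ below $1$ this is harmless — alternatively just note $\log(2R)\cdot R^s = O(R^{s'})$ for any $s'>s$, and re-choose $s$). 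Combining the two bounds yields $|P(z)|\ge B\exp(-A R^s)$ off the exceptional discs, which is \eqref{estcp}.

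The main obstacle is the bookkeeping in the near-factor estimate: one must check that the exceptional discs are the right ones (radius $|\zeta_j|^{-r}$, not $R^{-r}$) and that the logarithmic factor $\log(2R)$ times $n(2R)=O(R^s)$ can indeed be absorbed — this works precisely because we have strict inequality $\rho<s$, so there is room to trade a $\log$ for an arbitrarily small power of $R$. A minor subtlety is the small-$R$ regime: for $R$ bounded the product is continuous and nonvanishing off the zeros, and the discs shrink the relevant neighborhoods, so the bound holds trivially there after adjusting $B$; the asymptotic analysis above only matters for large $R$. One should also note the discs $|z-\zeta_j|<|\zeta_j|^{-r}$ are eventually disjoint and of rapidly shrinking radius (since $\rho<1$ forces $|\zeta_j|\to\infty$), so excluding them removes only a small set, which is what makes the statement useful.
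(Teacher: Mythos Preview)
Your proof is correct and follows essentially the same approach as the paper's: the identical splitting at $|\zeta_j|\lessgtr 2|z|$, the same elementary lower bound for the far factors, the same use of $|z-\zeta_j|\ge |\zeta_j|^{-r}$ to bound each near factor below by $|\zeta_j|^{-r-1}$, and the same absorption of the resulting $\log(2R)\cdot n(2R)$ term by passing to a slightly larger exponent $s'>s$. The only cosmetic difference is that for the far factors the paper bounds $|z/\zeta_j|$ directly by $2^{s-1}|z|^s|\zeta_j|^{-s}$ and uses $\sum_j|\zeta_j|^{-s}<\infty$, whereas you invoke integration by parts on the counting function; both give the same $O(R^s)$ bound.
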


Arguments like this are well known (see also \cite[Sect.~12]{lev}) but we could not find a reference for the particular version required here
and hence we provide a proof for the sake of completeness.

\begin{proof}
We split
\[
\prod_{j=1}^\infty \Big(1-\frac{z}{\zeta_j}\Big) = \prod_{|\zeta_j| > 2|z|} \Big(1-\frac{z}{\zeta_j}\Big)
\prod_{|\zeta_j| \le 2|z|} \Big(1-\frac{z}{\zeta_j}\Big)
\]
By virtue of $|1-z| \geq \E^{-c|z|}$ for $2|z| \le 1$ (with $c = 2\log(2)$), the first product can be estimated by
\[
\left| \prod_{|\zeta_j| > 2|z|} \Big(1-\frac{z}{\zeta_j}\Big) \right| \ge
\prod_{|\zeta_j| > 2|z|} \E^{-c|z/\zeta_j|}  \ge
\E^{-c 2^{s-1} |z|^s \sum_j |\zeta_j|^{-s}}
\]
as required. For the second product we use our requirement $|z-\zeta_j| \ge |\zeta_j|^{-r}$ for all $j$
to obtain
\[
\left| \prod_{|\zeta_j| \le 2|z|} \Big(1-\frac{z}{\zeta_j}\Big) \right| =
\prod_{|\zeta_j| \le 2|z|} \big| \frac{z-\zeta_j}{\zeta_j} \Big| \ge
\prod_{|\zeta_j| \le 2|z|} |\zeta_j|^{-r-1}.
\]
Finally, we have
\[
(r+1) \sum_{|\zeta_j| \le 2|z|} \log|\zeta_j| \le (r+1) n(2|z|) \log(2|z|) \le C |z|^s \log|z|
\]
for $|z| \ge 2$, where $n(R) = \#\{\zeta_j | |\zeta_j|\le R\}$ is the counting function for zeros. Hence
the claim holds for any $s'>s$ and we are done.
\end{proof}

Now we are ready to show the following:

\begin{lemma}\label{lem:cor}
Suppose Hypothesis~\ref{hyp:ev} (i) and (ii) hold. Then for every $s>\frac{1}{2}$ there is a second solution as
in Lemma~\ref{lem:theta} with $\gamma(z), \delta(z)\in R_s(\C)$.
In particular, we have $\theta(.,x) \in R_s(\C)$ in this case.
\end{lemma}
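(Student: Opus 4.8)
The plan is to read the required Wronskian relation
\[
W(\theta(z),\phi(z)) = \gamma(z)\beta(z) - \alpha(z)\delta(z) = 1
\]
as a Bézout identity in the ring $R_s(\C)$ and to solve it via the corona theorem (Theorem~\ref{thm:cor}). Write $\phi(z,x) = \alpha(z) c(z,x) + \beta(z) s(z,x)$ as in Lemma~\ref{lem:phi}. Since only Hypothesis~\ref{hyp:ev}~(i) is needed for this, Lemma~\ref{lem:phiev} lets us take $\alpha(z) = \phi(z,c)$ and $\beta(z) = \phi'(z,c)$ to be, up to nonzero normalizing constants, the canonical products over $\{\mu_j(c)\}$ and $\{\nu_j(c)\}$ respectively, both of growth order $\tfrac12$, hence in $R_s(\C)$ for every $s>\tfrac12$ (after a harmless shift $z\mapsto z+\lambda_0$ we may also assume $0$ is neither a Dirichlet nor a Neumann eigenvalue, so that these products are genuine). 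If we can produce $\gamma,\delta\in R_s(\C)$ with $\gamma\beta-\alpha\delta=1$, then symmetrizing — replace $\gamma(z)$ by $\tfrac12(\gamma(z)+\gamma(z^*)^*)$ and similarly $\delta$ — yields real entire functions, still in $R_s(\C)$ and still satisfying the identity because $\alpha,\beta$ are real; moreover $\gamma,\delta$ automatically have no common zeros, since a common zero would force $1=0$. Then $\theta(z,x)=\gamma(z)c(z,x)+\delta(z)s(z,x)$ is a second solution of the kind in Lemma~\ref{lem:theta}, and $\theta(\cdot,x)\in R_s(\C)$ because $c(\cdot,x),s(\cdot,x)$ are of growth order $\tfrac12$ (\cite[Lem.~9.18]{tschroe}) and $R_s(\C)$ is a ring.

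By Theorem~\ref{thm:cor}, since $\alpha,\beta\in R_s(\C)$ already, functions $g_1,g_2\in R_s(\C)$ with $g_1\alpha+g_2\beta=1$ (from which $\gamma=g_2$, $\delta=-g_1$) exist if and only if
\be
|\alpha(z)| + |\beta(z)| \ge b\,\E^{-a|z|^s}, \qquad z\in\C,
\ee
for some $a,b>0$; this is the heart of the matter and the only place Hypothesis~\ref{hyp:ev}~(ii) enters. It suffices to establish it for some $s\in(\tfrac12,1)$, because $R_{s_0}(\C)\subseteq R_s(\C)$ for $s_0\le s$. Fix such an $s$ and the exponent $r>0$ from \eqref{estmunu}. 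The zero sets $\{\mu_j(c)\}$, $\{\nu_j(c)\}$ have convergence exponent $\tfrac12<1$ by Hypothesis~\ref{hyp:ev}~(i) and Lemma~\ref{lem:hic}, so Lemma~\ref{lemesthp} (applied with this $r$, or a larger one) supplies $A,B>0$ with $|\alpha(z)|\ge B\E^{-A|z|^s}$ unless $z$ lies in one of the discs $\{|z-\mu_j(c)|<|\mu_j(c)|^{-r}\}$, and likewise for $\beta$ with the discs around the $\nu_j(c)$. It then remains to check that, outside a bounded set, these two families of exceptional discs are disjoint: if $z$ lies in the disc around some $\mu_{j_0}(c)$ with $j_0$ large, then by the gap bounds \eqref{estmunu}, the interlacing \eqref{interlacing}, and the decay of the radii $|\nu_j(c)|^{-r}$, $z$ avoids every disc around a $\nu_j(c)$, so the bound for $\beta$ applies; while if $z$ avoids all the $\mu$-discs, the bound for $\alpha$ applies. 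On the complementary bounded region — a disc $\{|z|\le R_0\}$ large enough to contain the finitely many eigenvalues for which \eqref{estmunu} might fail — $\alpha$ and $\beta$ have finitely many zeros and no common zero, so $|\alpha|+|\beta|$ has a positive minimum there, which is absorbed by enlarging $a$ and shrinking $b$.

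The main obstacle is exactly the disjointness step: one has to combine the quantitative gaps in Hypothesis~\ref{hyp:ev}~(ii) with the interlacing of the Dirichlet and Neumann spectra and with the decay of the radii $|\mu_j(c)|^{-r},|\nu_j(c)|^{-r}$, and in doing so one may need to pass to a larger value of $r$ in Lemma~\ref{lemesthp} than the one provided by \eqref{estmunu} and to treat the bounded set of small (possibly negative) eigenvalues separately. Once the lower bound is in hand, everything else — the reduction to a Bézout problem, the corona theorem, the symmetrization, and the passage from $\gamma,\delta$ to $\theta(\cdot,x)$ — is routine.
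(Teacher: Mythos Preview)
Your proposal is correct and follows essentially the same route as the paper: reduce $W(\theta,\phi)=1$ to a Bézout identity in $R_s(\C)$, invoke the corona theorem (Theorem~\ref{thm:cor}), and verify the lower bound $|\alpha(z)|+|\beta(z)|\ge B\E^{-A|z|^s}$ via Lemma~\ref{lemesthp} by showing that the exceptional discs around the $\mu_j(c)$ and the $\nu_j(c)$ are eventually disjoint thanks to Hypothesis~\ref{hyp:ev}~(ii) and interlacing, then symmetrize. Your write-up is in fact somewhat more explicit than the paper's about handling the bounded region and the possible passage to a larger $r$.
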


\begin{proof}
We will fix some $s> \frac{1}{2}$ such that $\alpha(z), \beta(z) \in R_s(\C)$. Using \eqref{thetagd} the Wronski condition
implies that we need to find $\gamma(z), \delta(z) \in R_s(\C)$ such that
\be\label{corona_equation}
\gamma(z) \beta(z) - \delta(z) \alpha(z) = 1.
\ee
This follows from Theorem~\ref{thm:cor} once we show
\be\label{corona_cond}
|\alpha(z)|+|\beta(z)| \ge B \exp(-A |z|^s)
\ee
for some constants $A, B>0$, which depend only on $s$ and $r$.

By \eqref{interlacing} and Hypothesis \ref{hyp:ev} there is an $N\in \N$ such that the disks
$|z-\mu_j|<|\mu_j|^{-r}$ and $|z-\nu_i|<|\nu_i|^{-r}$ are disjoint for all $j,i \ge N$. Hence Lemma~\ref{lemesthp}
implies that we have \eqref{estcp} for either $\alpha(z)$ or $\beta(z)$ and hence in particular for the sum of
both which proves the desired inequality \eqref{corona_cond}.
If $\gamma(z)$, $\delta(z)$ are not real, $\frac{1}{2}(\gamma(z) + \gamma(z^*)^*)$, $\frac{1}{2}(\delta(z) + \delta(z^*)^*)$
will be.
\end{proof}

As a consequence of the proof we also obtain the following:

\begin{corollary}\label{corIntR}
Suppose Hypothesis~\ref{hyp:ev} (i) and (ii) hold. Then one can use $\hat{g}(z)=\exp(z^2)$ in Theorem~\ref{IntR}.
If in addition $H$ is bounded from below, then one can also use $\hat{g}(z)=\exp(z)$.
\end{corollary}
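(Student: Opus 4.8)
The plan is to verify the single outstanding hypothesis of Theorem~\ref{IntR}, namely that $(1+\lam^2)^{-1}\hat g(\lam)^{-1}\in L^1(\R,d\rho)$, for the concrete choices $\hat g(z)=\exp(z^2)$ and (when $H$ is bounded from below) $\hat g(z)=\exp(z)$; both are entire and strictly positive on $\R$, so nothing else needs checking. I would obtain this integrability by a comparison argument whose comparison function is $\phi(\lam,c)^2+\phi'(\lam,c)^2$. The point is that this function already satisfies $\int_\R(1+\lam^2)^{-1}\big(\phi(\lam,c)^2+\phi'(\lam,c)^2\big)\,d\rho(\lam)<\infty$: this is immediate from Lemma~\ref{lemUub} evaluated at $z=\I\in\C\setminus\sig(H)$, where $|\lam-\I|^2=1+\lam^2$, together with the unitarity of $U$ (exactly the observation already flagged as the motivation for this corollary).

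The key step is then a matching \emph{lower} bound for $\phi(\lam,c)^2+\phi'(\lam,c)^2$ on the real line. By \eqref{eqalbe} this equals $\alpha(\lam)^2+\beta(\lam)^2$, and under Hypothesis~\ref{hyp:ev}~(i) and (ii) the proof of Lemma~\ref{lem:cor} already establishes the estimate \eqref{corona_cond}, i.e.\ $|\alpha(z)|+|\beta(z)|\ge B\exp(-A|z|^s)$ with constants $A,B>0$ depending only on $s$ and $r$, valid for every fixed $s>\tfrac{1}{2}$ (recall $\alpha,\beta\in R_s(\C)$ for every such $s$, as used in that proof). I would fix some $s\in(\tfrac{1}{2},1)$, say $s=\tfrac{3}{4}$, and combine this with $(|\alpha|+|\beta|)^2\le 2(\alpha^2+\beta^2)$ to get $\phi(\lam,c)^2+\phi'(\lam,c)^2\ge b\,\E^{-2A|\lam|^s}$ for all $\lam\in\R$. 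Since $s<1$ one has $\lam^2-2A|\lam|^s\to+\infty$ and, for $\lam\to+\infty$, also $\lam-2A|\lam|^s\to+\infty$; hence $\E^{-\lam^2}\le C(\phi(\lam,c)^2+\phi'(\lam,c)^2)$ for all $\lam\in\R$, and $\E^{-\lam}\le C'(\phi(\lam,c)^2+\phi'(\lam,c)^2)$ for all $\lam\ge E_0:=\inf\sig(H)$ in the bounded-below case. Feeding these into the integrability statement above yields $(1+\lam^2)^{-1}\E^{-\lam^2}\in L^1(\R,d\rho)$ unconditionally, and $(1+\lam^2)^{-1}\E^{-\lam}\in L^1(\R,d\rho)$ once one observes, via Corollary~\ref{corsup}, that $\rho$ is supported on $\sig(H)\subseteq[E_0,\infty)$ — which is precisely where boundedness from below is needed, since $\E^{-\lam}$ blows up as $\lam\to-\infty$.

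The one point to be careful about is that \eqref{corona_cond}, as produced by the Hadamard-product estimate of Lemma~\ref{lemesthp} and the disjointness of the exceptional discs coming from Hypothesis~\ref{hyp:ev}~(ii), is clean only for large $|\lam|$; on a bounded interval finitely many exceptional discs may survive. This is harmless on the real axis: $\alpha$ and $\beta$ are real-valued with no common zeros, so $\alpha(\lam)^2+\beta(\lam)^2$ is continuous and strictly positive there, hence bounded below by a positive constant on any compact set, and one simply shrinks $b$ to absorb this. The remaining manipulations are routine, using only $|\lam|^{s}=o(\lam^2)$ and, for $\lam\to+\infty$, $|\lam|^{s}=o(\lam)$.
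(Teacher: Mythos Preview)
Your proof is correct and follows essentially the same approach as the paper: combine the corona lower bound \eqref{corona_cond} from the proof of Lemma~\ref{lem:cor} with the integrability of $\phi(\lam,c)^2+\phi'(\lam,c)^2$ against $(1+\lam^2)^{-1}d\rho$ coming from Lemma~\ref{lemUub}. You supply more detail than the paper does---in particular the explicit handling of the bounded-from-below case via $\supp\rho\subseteq\sig(H)\subseteq[E_0,\infty)$ and the compact-set patch for the finitely many exceptional discs---but the core idea is identical.
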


\begin{proof}
Note that from the proof of the previous theorem we know $|\phi(z,c)|^2 + |\phi'(z,c)|^2 \ge \frac{1}{2}B^2 \exp(-2A |z|^s)$
for any $s>1/2$ (cf.\ \eqref{corona_cond} and recall \eqref{eqalbe}).
Since this function is in $L^1(\R,(1+\lam^2)^{-1}d\rho)$ by Lemma~\ref{lemUub}, the claim follows.
\end{proof}

\begin{remark}\label{rem:uniq2}
While under Hypothesis~\ref{hyp:ev} (i) a solution $\phi(z,x)$ of growth order at most $\frac{1}{2}$ is unique up to a constant,
$\theta(z,x)$ is only unique up to $f(z) \phi(z,x)$, where $f(z)$ is an entire function of growth order at most
$\frac{1}{2}$.
\end{remark}

\section{A local Borg--Marchenko uniqueness result}

As in the previous sections we assume that we have an entire system of solutions
$\theta(z,x)$ and $\phi(z,x)$ satisfying $W(\theta(z),\phi(z))=1$.

\begin{lemma}
The singular Weyl $m$-function $M(z)$ and the solution $\psi(z,x)$ defined in \eqref{defpsi}
have the following asymptotics
\begin{align}\label{asymM}
M(z) &= -\frac{\theta(z,x)}{\phi(z,x)} + O(\frac{1}{\sqrt{-z}\phi(z,x)^2}),\\ \label{asympsi}
\psi(z,x) &= \frac{1}{2\sqrt{-z} \phi(z,x)} \left( 1 + O(\frac{1}{\sqrt{-z}}) \right)
\end{align}
as $|z|\to\infty$ in any sector $|\re(z)| \le C |\im(z)|$.
\end{lemma}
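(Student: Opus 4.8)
The plan is to exploit the defining relation $\psi(z,x) = \theta(z,x) + M(z)\phi(z,x)$ together with the explicit formula \eqref{defM} for $M(z)$ and the known asymptotics of $m_+(z)$, the Weyl $m$-function for the regular problem on $(c,b)$. Recall from \cite[Lem.~9.19]{tschroe} (or the discussion around \eqref{mmkrein}) that $m_+(z) = \sqrt{-z}(1 + O(1/\sqrt{-z}))$ as $|z|\to\infty$ in the sector $|\re(z)|\le C|\im(z)|$, with the branch cut along the negative real axis. Since $\alpha(z),\beta(z),\gamma(z),\delta(z)$ are the Cauchy data of $\phi,\theta$ at the fixed base point $c$, the representation
\be
M(z) = -\frac{\gamma(z) m_+(z) - \delta(z)}{\alpha(z) m_+(z) - \beta(z)}
\ee
shows immediately that $M(z) = -\gamma(z)/\alpha(z) + O\big(1/(m_+(z)\alpha(z)^2)\big)$ provided $\alpha(z)$ does not vanish and is not too small along the relevant rays. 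Combined with $\gamma(z)/\alpha(z) = \theta(z,c)/\phi(z,c)$ this would already give \eqref{asymM} at $x=c$ with the error term $O(1/(\sqrt{-z}\,\phi(z,c)^2))$, using $\alpha(z)=\phi(z,c)$.

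The cleaner route, which also handles general $x$ uniformly, is to work directly at the point $x$ rather than at $c$. First I would observe that, by the Wronskian normalization $W(\theta(z),\phi(z))=1$, we have the identity
\be
M(z) + \frac{\theta(z,x)}{\phi(z,x)} = \frac{W(\theta(z),\psi(z))}{\phi(z,x)}\cdot\frac{1}{\phi(z,x)} \cdot (\,\cdots\,),
\ee
more precisely $\psi(z,x) = \theta(z,x)+M(z)\phi(z,x)$ rearranges to $M(z)+\theta(z,x)/\phi(z,x) = \psi(z,x)/\phi(z,x)$. Hence \eqref{asymM} is \emph{equivalent} to \eqref{asympsi}, and it suffices to prove the latter. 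For \eqref{asympsi} I would use that $\psi(z,x)$ is (up to the normalizing factor $a(z)$) the Weyl solution $u_+(z,x)$ that is $L^2$ near $b$, and that such Weyl solutions obey $u_+(z,x) = u_+(z,x_0)\E^{-(x-x_0)\sqrt{-z}}(1+O(1/\sqrt{-z}))$ as $|z|\to\infty$ in the sector, by the standard asymptotics for solutions of $\tau u = zu$ (cf.\ \eqref{phias} and \cite[Lem.~9.18]{tschroe}). Then one computes the Wronskian $W(\psi(z),\phi(z)) = -1$ (since $W(\theta,\phi)=1$) and uses the asymptotics \eqref{phias} for $\phi$: writing $\psi(z,x) \approx \psi(z,x_0)\E^{-(x-x_0)\sqrt{-z}}$ and $\phi(z,x)\approx \phi(z,x_0)\E^{-(x-x_0)\sqrt{-z}}$, the exponential prefactors in $\phi'$ and $\psi'$ each produce a factor $-\sqrt{-z}$ from differentiating, and a short computation of $W(\psi,\phi)$ at leading order gives $\psi(z,x_0)\phi(z,x_0)\cdot 2\sqrt{-z}(1+O(1/\sqrt{-z})) = -1$ — wait, one must be careful with signs and with the fact that the dominant terms in $\psi'$ and $\phi'$ are proportional, so the Wronskian is genuinely lower order; carrying the $O(1/\sqrt{-z})$ corrections through shows $\psi(z,x)\phi(z,x) = \frac{-1}{2\sqrt{-z}}(1+O(1/\sqrt{-z}))$, which is \eqref{asympsi} after dividing by $\phi(z,x)$.

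The main obstacle is making the error estimates \emph{uniform} in $x$ over $(a,b)$ while $|z|\to\infty$ in the sector, and in particular handling the behavior near the singular endpoint $a$: the asymptotic \eqref{phias} is stated as $|z|\to\infty$ for fixed $x_0,x$, so to get a statement at a \emph{fixed} $x$ this is fine, but one must ensure $\phi(z,x)\ne 0$ for $z$ large in the sector, which follows since the zeros of $\phi(\cdot,x)$ are the real eigenvalues $\mu_j(x)$ of $H^D_{(a,x)}$ and stay on the real axis. A secondary subtlety is that $\psi(z,x)$ involves $M(z)$, which a priori might only be meromorphic in $\C\setminus\R$; but inside the sector $|\re z|\le C|\im z|$ away from a bounded set, $W(\phi(z),u_+(z))$ cannot vanish (its zeros are eigenvalues of $H$, hence real), so $M(z)$ and $\psi(z,x)$ are holomorphic there and the asymptotic expansions apply. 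I would organize the proof as: (1) reduce \eqref{asymM} to \eqref{asympsi} via the algebraic identity; (2) recall the sectorial asymptotics of the $L^2$-near-$b$ solution $u_+$ and of $\phi$; (3) compute $W(\psi,\phi)=-1$ and extract \eqref{asympsi} from it; (4) note holomorphy in the sector justifies everything.
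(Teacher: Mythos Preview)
Your reduction is sound and in fact matches the paper: since $\psi=\theta+M\phi$, the identity
\[
M(z)+\frac{\theta(z,x)}{\phi(z,x)}=\frac{\psi(z,x)}{\phi(z,x)}
\]
shows that \eqref{asymM} and \eqref{asympsi} are equivalent, and it suffices to control the product $\phi(z,x)\psi(z,x)$. The paper does exactly this in one line, by quoting the standard asymptotics of the \emph{diagonal of the Green function}:
\[
G(z,x,x)=\phi(z,x)\psi(z,x)=\frac{1}{2\sqrt{-z}}+O(z^{-1})
\]
as $|z|\to\infty$ in any sector $|\re z|\le C|\im z|$ (combine Lem.~9.19 with (9.101), (9.103) of \cite{tschroe}). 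Dividing by $\phi$ gives \eqref{asympsi}, and then \eqref{asymM} follows from the algebraic identity above. No separate asymptotics for $\phi$ or $\psi$, no Hypothesis~\ref{hyp:ev}, and no Wronskian gymnastics are needed.

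Your alternative route via the Wronskian contains genuine errors that would make the argument fail as written. First, $W(\psi,\phi)=W(\theta,\phi)+M\,W(\phi,\phi)=1$, not $-1$. Second, and more seriously, $\phi$ and $\psi$ (equivalently $u_-$ and $u_+$) carry \emph{opposite} exponentials in $x$: schematically $\phi\sim \phi(z,x_0)\,\E^{+(x-x_0)\sqrt{-z}}$ while $\psi\sim\psi(z,x_0)\,\E^{-(x-x_0)\sqrt{-z}}$ in the sector (this is precisely what makes $\phi\psi$ independent of $x$ to leading order, as it must be since it equals the Green diagonal). With your assumption that both carry the \emph{same} exponential, the leading contributions to $W(\psi,\phi)$ cancel and the product $\phi\psi$ would behave like $\E^{\mp 2(x-x_0)\sqrt{-z}}$, which is incompatible with the constancy of the Wronskian; the ``carry the $O(1/\sqrt{-z})$ corrections through'' step cannot be completed without precise knowledge of those corrections, and in fact leads to a contradiction. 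If you correct the exponentials, the Wronskian computation gives $W(\psi,\phi)\sim 2\sqrt{-z}\,\phi\psi$, hence $\phi\psi\sim 1/(2\sqrt{-z})$ --- but that is exactly the Green-function identity the paper invokes directly, so the detour through \eqref{phias} (which also imports the extra Hypothesis~\ref{hyp:ev}~(i)) buys nothing. I recommend replacing your step~(3) with a direct citation of the Green diagonal asymptotics.
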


\begin{proof}
This follows by solving the well-known asymptotical formula (e.g., combine Lem.~9.19 and (9.101), (9.103) from \cite{tschroe})
for the diagonal of Green's function
\[
G(z,x,x) = \phi(z,x) \psi(z,x) = \frac{1}{2\sqrt{-z}} + O(z^{-1})
\]
for $\psi(z,x)$ and $M(z)$.
\end{proof}

In particular, \eqref{asymM} shows that asymptotics of $M(z)$ immediately follow once one
has corresponding asymptotics for $\theta(z,x)$ and $\phi(z,x)$. Moreover, the leading
asymptotics up to $O(z^{-1/2}\phi(z,x)^{-2})$ depends only on the values of $q(x)$ for
$x\in (a,c)$ (and on the choice of $\phi(z,x)$ and $\theta(z,x)$).
The converse is the content of the following Borg--Marchenko type uniqueness result.

\begin{theorem}\label{thmbm}
Let $q_0$ and $q_1$ be two potentials satisfying Hypothesis~\ref{hyp:ev} (i) and (ii).
Let $c\in(a,b)$ and $\phi_j(z,x), \theta_j(z,x) \in R_s(\C)$, $j=0,1$, for some $s \ge \frac{1}{2}$ be given.

Then, if  $\frac{\phi_1(z,x)}{\phi_0(z,x)} =1 +o(1)$ for one (and hence by \eqref{phias} for all) $x\in(a,c)$ and
$M_1(z)-M_0(z) = f(z) + O(\phi_0(z,c-\eps)^{-2})$ for some $f(z)\in R_s(\C)$ and
for every $\eps>0$ as $z\to\infty$ along some nonreal rays dissecting the complex plane into sectors of length less
than $\pi/s$, we have $q_0(x)=q_1(x)$ for $x\in(a,c)$.
\end{theorem}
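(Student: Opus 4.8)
The plan is to reduce the statement to a known local Borg--Marchenko uniqueness theorem for \emph{regular} Weyl $m$-functions by transferring all the information about the singular solutions to the regular base point $c$. Concretely, recall from \eqref{defM} that $M_j(z) = -\big(\gamma_j(z) m_{+,j}(z) - \delta_j(z)\big)/\big(\alpha_j(z) m_{+,j}(z) - \beta_j(z)\big)$, where $\alpha_j,\beta_j,\gamma_j,\delta_j\in R_s(\C)$ encode the singular solutions at $c$ via \eqref{eqalbe} and its $\theta$-analogue, and $m_{+,j}(z)$ is the ordinary half-line Weyl function of $H^D_{(c,b)}$ for the potential $q_j$. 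Solving \eqref{defM} for $m_{+,j}$ gives $m_{+,j}(z) = \big(\delta_j(z) + \beta_j(z) M_j(z)\big)/\big(\gamma_j(z) + \alpha_j(z) M_j(z)\big)$. The aim is to show that the hypotheses force $m_{+,0}(z) - m_{+,1}(z) \to 0$ fast enough (e.g.\ $O(\phi_0(z,c-\eps)^{-2})$ rescaled to $c$, which by \eqref{phias} is $O(\E^{-2(c-\eps-c)\sqrt{-z}}) = O(\E^{2\eps\sqrt{-z}})$ decaying faster than any negative power relative to the expected $-\sqrt{-z}$ leading term after we account for the sector) along the given rays, and then invoke the ordinary local Borg--Marchenko theorem (Gesztesy--Simon, or the Bennewitz-style proof the authors mention) to conclude $q_0 = q_1$ on $(c-\eps,c)$ for every $\eps$, hence on $(a,c)$.

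The first block of work is bookkeeping on the entire coefficient functions. From $\phi_1(z,x)/\phi_0(z,x) = 1 + o(1)$ for $x\in(a,c)$, together with Corollary~\ref{coruniqphi} and the fact that both $\phi_j$ have growth order $\tfrac12$ and the same zeros asymptotically is \emph{not} assumed — rather, I will use that $\phi_1/\phi_0 = \E^{g(z)}$ for an entire $g$ of order at most $s$ by the uniqueness corollaries, and the $o(1)$ condition along with \eqref{phias} pins down that $g(z) = o(\sqrt{-z})$ along the rays, in fact $g$ must be such that $\E^{g}\to 1$; one then also controls $\alpha_1/\alpha_0$, $\beta_1/\beta_0$ via \eqref{eqalbe}. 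Similarly the $f(z)\in R_s(\C)$ appearing in $M_1 - M_0 = f + O(\phi_0(z,c-\eps)^{-2})$ corresponds, via Corollary~\ref{coruniqtheta} and Remark~\ref{rem:uniq}, to the freedom $\ti\theta = \E^{-g}\theta - f\phi$ in the choice of the second solution; so after replacing $\theta_1$ by the correspondingly adjusted solution we may assume $f \equiv 0$ and $g\equiv 0$, i.e.\ $\phi_1 = \phi_0$, $\theta_1 = \theta_0$ (hence $\alpha_1=\alpha_0$, etc.) and $M_1(z) - M_0(z) = O(\phi_0(z,c-\eps)^{-2})$. This is the step where I must be careful that the normalization $W(\theta_j,\phi_j)=1$ is preserved and that the adjustment does not spoil membership in $R_s(\C)$; the growth-order hypotheses and Lemma~\ref{lem:cor} are exactly what make this legitimate.

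With $\alpha := \alpha_0 = \alpha_1$, etc., I compute
\[
m_{+,1}(z) - m_{+,0}(z) = \frac{\delta + \beta M_1}{\gamma + \alpha M_1} - \frac{\delta + \beta M_0}{\gamma + \alpha M_0}
= \frac{(\beta\gamma - \alpha\delta)(M_1 - M_0)}{(\gamma + \alpha M_1)(\gamma + \alpha M_0)}
= \frac{M_1(z) - M_0(z)}{(\gamma(z) + \alpha(z) M_0(z))(\gamma(z) + \alpha(z) M_1(z))},
\]
using $\beta\gamma - \alpha\delta = W(\theta,\phi) = 1$. Now $\gamma(z) + \alpha(z) M_j(z) = -W(\phi(z),u_{+,j}(z))/(-1)$ is (up to sign) $a_j(z)^{-1}$ times $\psi_j$'s normalization, but more usefully, by \eqref{asymM}–\eqref{asympsi} applied at $x=c$ one has $\gamma + \alpha M_j = -\alpha\,\theta/\phi\big|_{x=c}\cdot(\dots)$; the clean estimate is $\phi(z,c)(\gamma(z)+\alpha(z)M_j(z)) = \phi(z,c)\big(\theta(z,c)/\phi(z,c) \cdot(-1)\cdot\alpha/\gamma \text{ corrections}\big)$ — more simply, since $\psi_j(z,c) = \theta(z,c) + M_j(z)\phi(z,c) = \gamma(z) + \alpha(z)M_j(z)$ and \eqref{asympsi} gives $\psi_j(z,c) = \big(2\sqrt{-z}\,\phi(z,c)\big)^{-1}(1 + O(z^{-1/2}))$, we get $(\gamma + \alpha M_j)^{-1} = 2\sqrt{-z}\,\phi(z,c)(1 + o(1))$ along the sectors. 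Hence
\[
m_{+,1}(z) - m_{+,0}(z) = 4(-z)\,\phi(z,c)^2 \big(M_1(z) - M_0(z)\big)(1 + o(1)) = O\!\left((-z)\,\phi(z,c)^2\,\phi_0(z,c-\eps)^{-2}\right),
\]
and by \eqref{phias}, $\phi(z,c)^2\phi_0(z,c-\eps)^{-2} = \E^{-2\eps\sqrt{-z}}(1+o(1))$, so $m_{+,1} - m_{+,0} = O\big((-z)\E^{-2\eps\sqrt{-z}}\big) = O(\E^{-\eps\sqrt{-z}})$ say, along the rays — decaying exponentially. Since $m_{+,j}$ are ordinary Herglotz $m$-functions of regular (at $c$) Schrödinger operators, the classical local Borg--Marchenko theorem (exponential closeness along a nonreal ray — or a collection of rays cutting the plane into sectors of opening $<\pi/s$, which here with $s$ possibly $>1/2$ is why we need several rays and the Phragmén--Lindelöf / Nevanlinna-type argument of Bennewitz rather than a single ray) yields $q_0 = q_1$ a.e.\ on $(c-\eps,c)$. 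Letting $\eps\downarrow 0$ finishes the proof.

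I expect the \textbf{main obstacle} to be the second paragraph: rigorously using the freedom in Corollary~\ref{coruniqphi}, Corollary~\ref{coruniqtheta} and Remark~\ref{rem:uniq} to normalize away $g$ and $f$ while (a) keeping everything inside $R_s(\C)$, (b) not disturbing the hypothesis $\phi_1/\phi_0 = 1+o(1)$, and (c) ensuring the $O(\phi_0(z,c-\eps)^{-2})$ error is genuinely an error and not partly absorbed into a nontrivial entire $f$ — i.e.\ proving that an entire function of order $\le s$ which is $O(\phi_0(z,c-\eps)^{-2})$ along enough rays must actually be forced, together with the Herglotz structure of the $m_{+,j}$, into a form that contributes nothing to the potential on $(c-\eps,c)$. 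The remaining analytic estimates and the appeal to the classical local Borg--Marchenko result (for which the authors cite \cite{gs}, \cite{si}, \cite{ben}) are comparatively routine once the normalization is in place.
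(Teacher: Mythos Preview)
Your reduction to the classical Borg--Marchenko theorem for $m_{+,j}$ breaks down at the normalization step, and the error is not just a matter of care: it is structural. Corollary~\ref{coruniqphi} tells you that two admissible entire solutions \emph{of the same equation} differ by a factor $\E^{g(z)}$; it says nothing about solutions of $\tau_0 u = z u$ versus $\tau_1 u = z u$. Since $q_0$ and $q_1$ are a priori different on $(a,c)$, $\phi_0(z,x)$ and $\phi_1(z,x)$ satisfy different differential equations, and there is no entire $g$ with $\phi_1 = \E^g \phi_0$. Consequently you cannot arrange $\alpha_0=\alpha_1$, $\beta_0=\beta_1$, $\gamma_0=\gamma_1$, $\delta_0=\delta_1$. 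In fact, if you \emph{could}, the argument would collapse to a triviality: $\alpha_j=\phi_j(z,c)$ and $\beta_j=\phi_j'(z,c)$ already determine $m_{-,j}(z)=-\beta_j/\alpha_j$, so $\alpha_0=\alpha_1$, $\beta_0=\beta_1$ would give $m_{-,0}\equiv m_{-,1}$ and hence $q_0=q_1$ on $(a,c)$ by the \emph{global} Borg--Marchenko theorem, without ever touching $M_j$ or $m_{+,j}$. This shows your normalization is assuming what is to be proved. There is also a direction issue: exponential closeness of $m_{+,0}$ and $m_{+,1}$ controls the potentials on $(c,c+\eps)$, not on $(c-\eps,c)$, so even with the normalization granted the conclusion would land on the wrong side of $c$.

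The paper's proof is a direct Bennewitz-style argument and does not pass through $m_{+,j}$ at all. After absorbing $f$ into $\theta_0$ (this part of your reduction is fine), one looks at the single entire function
\[
F(z,x)=\phi_1(z,x)\theta_0(z,x)-\phi_0(z,x)\theta_1(z,x),
\]
for a fixed $x\in(a,c)$, rewrites it as $\phi_1\psi_0-\phi_0\psi_1+(M_1-M_0)\phi_0\phi_1$, and uses \eqref{asympsi} together with the hypotheses $\phi_1/\phi_0\to 1$ and $M_1-M_0=O(\phi_0(z,c-\eps)^{-2})$ to see that $F(z,x)\to 0$ along the given rays. Since $F(\cdot,x)\in R_s(\C)$ and the rays cut $\C$ into sectors of opening $<\pi/s$, Phragm\'en--Lindel\"of plus Liouville give $F\equiv 0$. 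From $\phi_1\theta_0=\phi_0\theta_1$ one differentiates (using $W(\theta_j,\phi_j)=1$) to obtain $\phi_0^2=\phi_1^2$, and one more logarithmic differentiation yields $q_0=q_1$ on $(a,c)$. The point is that $F(z,x)$ is entire in $z$ precisely because the $M_j$ cancel out of it; no identification of $\phi_0$ with $\phi_1$ is ever needed.
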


\begin{proof}
First of all note that after the replacement $\theta_0(z,x) \to \theta_0(z,x) - f(z) \phi_0(z,x)$ (cf.\ Remark~\ref{rem:uniq})
we can assume $f(z)=0$ without loss of generality.

Now note that the entire function
\begin{align*}
\phi_1(z,x) \theta_0(z,x) - \phi_0(z,x) \theta_1(z,x) =& \phi_1(z,x) \psi_0(z,x) - \phi_0(z,x) \psi_1(z,x)\\
& +(M_1(z)-M_0(z)) \phi_0(z,x) \phi_1(z,x)
\end{align*}
vanishes as $z\to\infty$ along our nonreal rays for fixed $x\in(a,c)$. For the first two terms this
follows from \eqref{asympsi} together with our hypothesis that $\phi_0(z,x)$ and $\phi_1(z,x)$
have the same asymptotics. For the last term this follows by our assumption on $M_1(z)-M_0(z)$.
Moreover, by our hypothesis this function has an order of growth $s$ and thus we
can apply the Phragm\'en--Lindel\"of theorem (e.g., \cite[Sect.~6.1]{lev}) in the angles bounded by our rays.
This shows that the function is bounded on all of $\C$.
By Liouville's theorem it must be constant and since it vanishes along a ray, it must be zero; that is,
$\phi_1(z,x) \theta_0(z,x) = \phi_0(z,x) \theta_1(z,x)$ for all $z\in\C$ and $x\in(a,c)$.
Dividing both sides of this identity by $\phi_0\phi_1$, differentiating with respect to $x$, and using $W(\theta_j(z),\phi_j(z))=1$ shows
$\phi_1(z,x)^2 = \phi_0(z,x)^2$. Taking the logarithmic derivative further gives
$\phi_1'(z,x)/\phi_1(z,x)=\phi_0'(z,x)/\phi_0(z,x)$ and differentiating once more shows
$\phi_1''(z,x)/\phi_1(z,x)=\phi_0''(z,x)/\phi_0(z,x)$. This finishes the proof since $q_j(x)=z + \phi_j''(z,x)/\phi_j(z,x)$.
\end{proof}

Note that while at first sight it might look like the condition $\frac{\phi_1(z,x)}{\phi_0(z,x)} =1 +o(1)$ requires knowledge
of $\phi_j(z,x)$, this is not the case, since the high energy asymptotics will only involve some qualitative information
on the kind of the singularity at $a$ as we will show in the next section. Moreover, one could even allow for
different values of $b_0$ and $b_1$ for $q_0$ and $q_1$, respectively.

Next, the appearance of the additional freedom $f(z)$ just reflects the fact that we only ensure the same normalization
for $\phi_1(z,x)$ and $\phi_2(z,x)$ but not for $\theta_1(z,x)$ and $\theta_2(z,x)$ (cf.\ Remark~\ref{rem:uniq2}).

Finally, assuming Hypothesis~\ref{hyp:ev} (i'), then by Lemma~\ref{lem:phiev2}, one can write the condition
$M_1(z)-M_0(z) = f(z) + O(\phi(z,c-\eps)^{-2})$ in the more usual form $M_1(z)-M_0(z) = f(z) + O(\E^{- 2(\Delta_0 + c-a -\eps) \sqrt{-z}})$
for every $\eps > 0$. In this respect recall that Hypothesis~\ref{hyp:ev} (i') holds under the assumptions of
Lemma~\ref{lem:wa} with $\Delta_0=0$.

\section{Applications to perturbed Bessel potentials}
\label{sec:ap}

Here we want to continue our example from Section~\ref{sec:ex} and consider
perturbations
\be
H = H_l + q, \qquad x\in(0,b),
\ee
(taking the same boundary conditions as for $H_l$ at $x=0$), where $q$ satisfies

\begin{hypothesis}\label{hyp:q}
Let $l\in [-\frac{1}{2},\infty)$. Set
\be \label{f1}
\ti{q}(x) = \begin{cases}
|q(x)|, & l > -\frac{1}{2},\\
(1-\log(x)) |q(x)|, & l = -\frac{1}{2},
\end{cases}
\ee
and suppose $q \in L^1_{loc}(0,\infty)$ is real-valued such that
\be\label{a0!}
\begin{array}{c}
x \ti{q}(x) \in L^1(0,1).
\end{array}
\ee
\end{hypothesis}

By \cite[Lem~2.2]{kst} there is an entire solution $\phi(z,x)$ of growth order $\frac{1}{2}$ which is square integrable near $0$.
In particular, it is shown that this solution satisfies
\begin{align}\label{asymphibes}
\phi(z,x) &= z^{-\frac{l+1}{2}}\left(\sin\bigl(\sqrt{z} x- \frac{l \pi}{2}\bigr) + o\bigl(\E^{x |\im(\sqrt{z})|}\bigr)\right),\\\label{asymphibes2}
\phi'(z,x) &= z^{-\frac{l}{2}}\left(\cos\bigl(\sqrt{z} x- \frac{l \pi}{2}\bigr) + o\bigl(\E^{x |\im(\sqrt{z})|}\bigr)\right),
\end{align}
as $|z|\to\infty$. As a first consequence these asymptotics imply that $M(z)$ is a generalized Nevanlinna function (cf.\ Appendix~\ref{app:gnf}).

\begin{theorem}\label{thmpbgn}
Choose $\phi(z,x)$ as in \eqref{asymphibes}.
There is a choice for $\theta(z,x)$ such that
\be
M(z) = (1+z^2)^k \int_\R \left(\frac{1}{\lam-z} - \frac{\lam}{1+\lam^2}\right) \frac{d\rho(\lam)}{(1+\lam^2)^k}, \quad k = \ceil{\frac{l+1}{2}}.
\ee
In particular, $M(z)\in N_\kappa^\infty$ for some $\kappa \le k$.
\end{theorem}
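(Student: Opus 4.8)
The plan is to apply Theorem~\ref{thm:nkap}, so the whole argument reduces to verifying the integrability condition $(1+\lam^2)^{-k-1}\in L^1(\R,d\rho)$ with $k=\ceil{\frac{l+1}{2}}$. First I would recall from Lemma~\ref{lemUub} (and the remark following Theorem~\ref{IntR}) that $\phi(\lam,x)^2+\phi'(\lam,x)^2\in L^1(\R,(1+\lam^2)^{-1}d\rho)$ for every fixed $x\in(0,b)$. The idea is then to bound the power $(1+\lam^2)^{-k}$ from above by a (finite, $x$-dependent) linear combination of $\phi(\lam,x)^2$ and $\phi'(\lam,x)^2$ for $\lam$ in a suitable set, using the high-energy asymptotics \eqref{asymphibes}--\eqref{asymphibes2}. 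Indeed, for real $\lam=\sqrt{z}^2>0$ those asymptotics read, after fixing some convenient $x_0\in(0,b)$,
\begin{align*}
\phi(\lam,x_0) &= \lam^{-\frac{l+1}{2}}\bigl(\sin(\sqrt{\lam}x_0-\tfrac{l\pi}{2}) + o(1)\bigr),\\
\phi'(\lam,x_0) &= \lam^{-\frac{l}{2}}\bigl(\cos(\sqrt{\lam}x_0-\tfrac{l\pi}{2}) + o(1)\bigr),
\end{align*}
so that $\lam^{l}\,\phi(\lam,x_0)^2 + \lam^{l-1}\,\phi'(\lam,x_0)^2 \to 1$ as $\lam\to+\infty$ along the real axis; in particular this quantity is bounded below by a positive constant for $\lam$ large. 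Since $k=\ceil{\frac{l+1}{2}}$ gives $2k\ge l+1$, one has $\lam^{-k-1}\le C\,(\lam^{-1})\,(\lam^{l}\phi(\lam,x_0)^2+\lam^{l-1}\phi'(\lam,x_0)^2)$ for $\lam$ large, and the right-hand side is $(1+\lam^2)^{-1}$ times an $L^1(d\rho)$ function up to constants; over the bounded part of $\supp(d\rho)$ (recall $\rho$ is supported on $[0,\infty)$ essentially, and in any case bounded pieces contribute a finite mass since $\rho$ is a Borel measure finite on compacts) the estimate is trivial. This yields $(1+\lam^2)^{-k-1}\in L^1(\R,d\rho)$.

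With this integrability in hand, Theorem~\ref{thm:nkap} (applied with the chosen $\phi(z,x)$) furnishes a solution $\theta(z,x)$ for which the corresponding singular Weyl function has exactly the integral representation claimed, namely $\ti{M}(z)=(1+z^2)^k\int_\R\bigl(\frac{1}{\lam-z}-\frac{\lam}{1+\lam^2}\bigr)\frac{d\rho(\lam)}{(1+\lam^2)^k}$, and it belongs to $N_\kappa^\infty$ for some $\kappa\le k$. Concretely this amounts to taking $\hat g(z)=(1+z^2)^k$ in Theorem~\ref{IntR} and absorbing the entire part $f(z)$ into $\theta$ via $\ti\theta(z,x)=\theta(z,x)-f(z)\phi(z,x)$ as in Remark~\ref{rem:uniq}. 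I would also note that the existence of an entire $\phi(z,x)$ of growth order $\frac12$ normalized as in \eqref{asymphibes}, needed to even invoke Lemma~\ref{lemUub} and Theorem~\ref{thm:nkap}, is guaranteed by \cite[Lem.~2.2]{kst} under Hypothesis~\ref{hyp:q}, as recalled just before the statement.

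The only genuinely delicate point is the direction of the inequality in the estimate $\lam^{-k}\lesssim \lam^{l}\phi(\lam,x_0)^2+\lam^{l-1}\phi'(\lam,x_0)^2$: although $\sin$ and $\cos$ each vanish, they never vanish simultaneously, so $\sin^2+\cos^2\equiv 1$ keeps the combination bounded away from zero; the $o(1)$ error terms in \eqref{asymphibes}--\eqref{asymphibes2} are uniform enough (they are $o(\E^{x|\im\sqrt z|})$, which on the positive real axis is simply $o(1)$) that for $\lam$ beyond some threshold the combination stays $\ge \frac12$, say. One must be a little careful that the asymptotics hold along the real axis and not merely in a sector, but \eqref{asymphibes}--\eqref{asymphibes2} are stated as $|z|\to\infty$ without a sector restriction, so this is fine. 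Everything else is bookkeeping: splitting $\R$ into a compact part (handled by finiteness of $\rho$ on compacts) and the tail (handled by the asymptotic bound together with Lemma~\ref{lemUub}), and then quoting Theorem~\ref{thm:nkap}. The final sharpness remark $\kappa=k$ would require, per Theorem~\ref{thm:nkap}, checking that $(1+\lam^2)^{-k}\notin L^1(\R,d\rho)$ when $l+1$ is such that $2k=l+1$ fails; but the statement as given only claims $\kappa\le k$, so I would not pursue the exact value here (that is deferred to the follow-up \cite{kt}).
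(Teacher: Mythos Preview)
Your approach is essentially the paper's: use the asymptotics \eqref{asymphibes}--\eqref{asymphibes2} to bound a power of $\lam$ from above by a combination of $\phi(\lam,x_0)^2$ and $\phi'(\lam,x_0)^2$, invoke Lemma~\ref{lemUub} to get integrability against $(1+\lam^2)^{-1}d\rho$, and conclude via Theorem~\ref{thm:nkap}. There is, however, an arithmetic slip in your exponents: from $\phi\sim\lam^{-(l+1)/2}\sin(\cdot)$ and $\phi'\sim\lam^{-l/2}\cos(\cdot)$ one gets $\lam^{l+1}\phi(\lam,x_0)^2+\lam^{l}\phi'(\lam,x_0)^2\to 1$, not the combination with $\lam^{l}$ and $\lam^{l-1}$ you wrote (which tends to $0$). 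The paper phrases this as $\phi(\lam,x)^2+\lam^{-1}\phi'(\lam,x)^2=\lam^{-l-1}(1+o(1))$, and then the chain is simply
\[
(1+\lam^2)^{-k}\sim\lam^{-2k}\le\lam^{-(l+1)}\le C\bigl(\phi(\lam,x_0)^2+\lam^{-1}\phi'(\lam,x_0)^2\bigr)\le C\bigl(\phi(\lam,x_0)^2+\phi'(\lam,x_0)^2\bigr)
\]
for $\lam$ large (using $2k\ge l+1$), and the right-hand side lies in $L^1(\R,(1+\lam^2)^{-1}d\rho)$ by Lemma~\ref{lemUub}. With this correction your inequality ``$\lam^{-k-1}\le C\lam^{-1}(\ldots)$'' and the subsequent ``$(1+\lam^2)^{-1}$ times an $L^1(d\rho)$ function'' claim become coherent; as written they do not quite parse. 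The rest of your write-up (handling the compact part, applying Theorem~\ref{thm:nkap}, and the remarks on $\kappa$) is fine.
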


\begin{proof}
Just note that by \eqref{asymphibes} and \eqref{asymphibes2} we have $\phi(\lam,x)^2 + \lam^{-1} \phi'(\lam,x)^2 =
\lam^{-l-1} (1 + o(1))$. Since this function is in $L^1(\R, (1+\lam^2)^{-1}d\rho)$ by Lemma~\ref{lemUub}, the result follows from
Theorem~\ref{thm:nkap}.
\end{proof}

In the case of analytic perturbations this result was first obtained by \cite{fl} and \cite{kl} (the latter only considered a Coulomb term
$q(x)=q_1/x$). Moreover, the correct value of $\kappa= \floor{\frac{l}{2}+\frac{3}{4}}$ (cf.\ Section~\ref{sec:ex}) was determined, whereas
the above result gives only an estimate from above. On the other hand, our result significantly extends all previous results since it
does not require analyticity of the perturbation! Determining the correct value of $\kappa$ and an explicit construction of a
corresponding $\theta(z,x)$, without requiring analyticity of  the perturbation, will be addressed in a follow-up paper \cite{kt}
based on a detailed study of the solutions of the underlying differential equation combined with the results of the present paper.

Next we turn to our local Borg--Marchenko result.

\begin{lemma}
Suppose Hypothesis~\ref{hyp:q}. Then $H$ satisfies Hypothesis~\ref{hyp:ev} (i') and (ii) for any $r\ge 0$.
\end{lemma}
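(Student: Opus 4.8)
The plan is to verify the two parts of Hypothesis~\ref{hyp:ev} separately, reducing everything to known results about the perturbed Bessel operator. First I would establish part (i'): the Weyl-type asymptotics \eqref{eqwa} for $N_{H^D_{(0,c)}}(R)$ with $\Delta(x)=x$ and $\Delta_0=0$. This is precisely the situation covered by Lemma~\ref{lem:wa}, so I would check that Hypothesis~\ref{hyp:q} implies the pointwise bound \eqref{eq:knez} on $\min(q(x),0)$. Indeed, writing the full potential as $l(l+1)/x^2 + q(x)$ with $a=0$, we have $l(l+1)/x^2 \ge -\frac{1}{4}x^{-2}$ for all admissible $l\ge -\frac12$ (since $l(l+1)\ge -\frac14$), so the singular part of the potential at $0$ already fits into the borderline $C=\frac14$ case, while the perturbation $q$ contributes a term $Q(x)=|q(x)|$ (respectively $(1-\log x)|q(x)|$ when $l=-\frac12$) which lies in the required weighted $L^1$ class exactly by condition \eqref{a0!}. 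Thus Lemma~\ref{lem:wa} applies and gives Hypothesis~\ref{hyp:ev}(i') with $\Delta(x)=x$; by Lemma~\ref{lem:hic} this is consistent (and $\Delta_0=0$).

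Next I would turn to part (ii), the separation estimate \eqref{estmunu} between Dirichlet and Neumann eigenvalues $\mu_j(c)$ and $\nu_j(c)$ of $H^X_{(0,c)}$. The natural strategy is to exploit the high-energy asymptotics of the solution $\phi(z,x)$ and its derivative from \eqref{asymphibes}--\eqref{asymphibes2}, together with the fact (from Lemma~\ref{lem:phiev}) that the Dirichlet eigenvalues $\mu_j(c)$ are the zeros of $z\mapsto\phi(z,c)$ and the Neumann eigenvalues $\nu_j(c)$ are the zeros of $z\mapsto\phi'(z,c)$. Writing $\mu_j(c) = k_j^2$, the asymptotics \eqref{asymphibes} show that $\sin(k_j c - l\pi/2) \to 0$, so $k_j c - l\pi/2 = \pi j + o(1)$, i.e.\ $k_j = \frac{\pi}{c}(j + \frac{l}{2}) + o(1)$, and similarly the Neumann roots satisfy $\cos(\kappa_j c - l\pi/2)\to 0$, giving $\kappa_j c - l\pi/2 = \pi(j+\tfrac12) + o(1)$. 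Hence $k_j$ and $\kappa_j$ differ by $\frac{\pi}{2c} + o(1)$, which is bounded away from zero, and consequently the differences $\mu_j(c)-\nu_{j-1}(c)$ and $\nu_j(c)-\mu_j(c)$ grow like $\frac{\pi^2}{c^2}\cdot\frac{j}{ }$, i.e.\ linearly in $j$ and hence like $\sqrt{\mu_j(c)}$. Since any fixed negative power $\mu_j(c)^{-r}$ (or $\nu_{j-1}(c)^{-r}$) tends to $0$, the right-hand side of \eqref{estmunu} is eventually much smaller than the left-hand side, so \eqref{estmunu} holds for every $r\ge 0$. One has to be a little careful to control the error terms in \eqref{asymphibes}--\eqref{asymphibes2} uniformly enough to pin down the roots to within $o(1)$; this is the one genuinely technical point, but it follows from the standard fact that these asymptotics are uniform in horizontal strips, or alternatively one can quote the interlacing \eqref{interlacing} together with the Weyl asymptotics $N_{H^D_{(0,c)}}(R)=\frac{c}{\pi}R^{1/2}+o(R^{1/2})$ just established, which already forces $\mu_j(c)\sim\nu_j(c)\sim(\pi j/c)^2$ and thus $\mu_j(c)-\nu_{j-1}(c) = (\sqrt{\mu_j(c)}-\sqrt{\nu_{j-1}(c)})(\sqrt{\mu_j(c)}+\sqrt{\nu_{j-1}(c)})$ with the first factor $\to\frac{\pi}{2c}$ and the second $\to\infty$.

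The main obstacle I anticipate is making the spectral gap estimate rigorous rather than heuristic: the naive counting-function argument only gives $\mu_j(c)-\nu_{j-1}(c)=o(\sqrt{\mu_j(c)})$-type control from interlacing alone, so to get the clean lower bound of order $\sqrt{\mu_j(c)}$ needed in \eqref{estmunu} one really does need the sharper oscillatory asymptotics \eqref{asymphibes}--\eqref{asymphibes2}, or equivalently Krein's representation \eqref{mmkrein} combined with $m_-(z)=-\sqrt{-z}+O(1)$ as invoked in the Remark following Hypothesis~\ref{hyp:ev}. Once the asymptotic location of the roots $k_j,\kappa_j$ is established to within $o(1)$, the rest is immediate arithmetic: the two families are separated by a fixed positive constant in the $k$-variable, hence by a quantity growing linearly in $j$ in the $\lambda$-variable, which dominates any negative power $\lambda^{-r}$. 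I would therefore organize the proof as: (1) reduce (i') to Lemma~\ref{lem:wa} by checking \eqref{eq:knez}; (2) use \eqref{asymphibes}--\eqref{asymphibes2} (valid since Hypothesis~\ref{hyp:q} gives a solution $\phi$ of growth order $\tfrac12$ by \cite[Lem.~2.2]{kst}) to locate $\mu_j(c),\nu_j(c)$ asymptotically; (3) conclude \eqref{estmunu} for all $r\ge 0$ from the resulting linear-in-$j$ gaps.
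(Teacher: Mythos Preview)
Your proposal is correct and ultimately rests on the same input as the paper's proof, namely the sharp eigenvalue asymptotics for the perturbed Bessel operator. The paper is simply more direct: it cites \cite[Thm.~2.5]{kst}, which already gives
\[
\mu_j = \Big(\pi\big(j+\tfrac{l}{2}\big)+\eps_j\Big)^2, \qquad
\nu_j = \Big(\pi\big(j+\tfrac{l+1}{2}\big)+\ti\eps_j\Big)^2, \qquad \eps_j,\ti\eps_j\to 0,
\]
and observes that both (i') and (ii) are immediate consequences. Your route re-derives these asymptotics from \eqref{asymphibes}--\eqref{asymphibes2} (which is exactly how \cite[Thm.~2.5]{kst} is proved), and for (i') you additionally pass through Lemma~\ref{lem:wa}, whose proof itself invokes \cite[Thm.~2.5]{kst}. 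So nothing is gained; you are essentially unpacking the citation.

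One small correction in your treatment of (i'): you write that the singular part fits into the borderline $C=\tfrac14$ case for all $l$, but for $l>-\tfrac12$ Hypothesis~\ref{hyp:q} only guarantees $x|q(x)|\in L^1(0,1)$, not $x(1-\log x)|q(x)|\in L^1(0,1)$, so the $C=\tfrac14$ branch of Lemma~\ref{lem:wa} is not available. The fix is to take $C=\max(0,-l(l+1))$, which is strictly less than $\tfrac14$ when $l>-\tfrac12$ (so the weaker integrability condition suffices) and equals $\tfrac14$ only when $l=-\tfrac12$ (where the stronger condition is part of Hypothesis~\ref{hyp:q}). With that adjustment your argument for (i') goes through.
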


\begin{proof}
Without loss of generality choose $c=1$ in Hypothesis~\ref{hyp:ev}.
Then the claim is immediate from the eigenvalue asymptotics \cite[Thm.~2.5]{kst}
\begin{align}
\mu_j &= \left(\pi\Big(j + \frac{l}{2} \Big) + \eps_j \right)^2,\\
\nu_j &= \left(\pi\Big(j + \frac{l+1}{2} \Big) + \ti{\eps}_j \right)^2,
\end{align}
where $\eps_j, \ti{\eps}_j \to 0$.
\end{proof}

Moreover, using the detailed asymptotics from \cite[Lem~2.2]{kst} we can even strengthen the content of Lemma~\ref{lem:cor} in
this special case.

\begin{lemma}\label{lem:bes}
Suppose Hypothesis~\ref{hyp:q}. Then there is a system of real entire solutions $\phi(z,x)$ and $\theta(z,x)$ with
$W(\theta(z),\phi(z))=1$ such that $\phi(.,x), \theta(.,x) \in R_{1/2}(\C)$ for every $x>0$.
\end{lemma}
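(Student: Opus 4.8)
The plan is to improve the conclusion of Lemma~\ref{lem:cor} (which only gives $\theta(.,x)\in R_s(\C)$ for every $s>\frac12$) to the borderline case $s=\frac12$ by using the sharper quantitative information available for perturbed Bessel operators. We already know from \cite[Lem.~2.2]{kst} (see the discussion before Lemma~\ref{lem:phiev}) that there is an entire $\phi(z,x)$ of growth order $\frac12$ which is square integrable near $0$, and, by Lemma~\ref{lem:phiev}, this $\phi(z,x)$ is given by the Hadamard product \eqref{phiprod} over the Dirichlet eigenvalues $\mu_j(x)$, with $\phi'(z,x)$ given by \eqref{phipprod} over the Neumann eigenvalues $\nu_j(x)$. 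In particular, recalling \eqref{eqalbe}, the functions $\alpha(z)=\phi(z,c)$ and $\beta(z)=\phi'(z,c)$ lie in $R_{1/2}(\C)$. So it remains to produce $\gamma(z),\delta(z)\in R_{1/2}(\C)$ solving the corona equation \eqref{corona_equation}, i.e. $\gamma(z)\beta(z)-\delta(z)\alpha(z)=1$, and then symmetrize as in the proof of Lemma~\ref{lem:theta}.

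The key point is to verify the corona hypothesis of Theorem~\ref{thm:cor} at the exponent $s=\frac12$, namely
\be
|\alpha(z)|+|\beta(z)| \ge b\exp(-a|z|^{1/2})
\ee
for some $a,b>0$. First I would use the explicit asymptotics \eqref{asymphibes} and \eqref{asymphibes2}: for $|z|$ large, $\phi(z,c)$ behaves like $z^{-(l+1)/2}(\sin(\sqrt z c - l\pi/2)+o(\E^{c|\im\sqrt z|}))$ and $\phi'(z,c)$ like $z^{-l/2}(\cos(\sqrt z c-l\pi/2)+o(\E^{c|\im\sqrt z|}))$. Since $|\sin w|^2+|\cos w|^2\ge \frac12\E^{2|\im w|}$ for all $w\in\C$, on any region where $|\im\sqrt z|$ is comparable to $|\sqrt z|$ (e.g. away from a neighborhood of the positive real axis) one immediately gets the required lower bound with room to spare. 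The delicate region is the sector around the positive real axis, where $\sqrt z$ is nearly real; there one must use the precise location of the zeros. The zeros of $\phi(z,c)$ are exactly the $\mu_j(c)$ and those of $\phi'(z,c)$ are exactly the $\nu_j(c)$, and by the eigenvalue asymptotics cited above (via the previous lemma) $\sqrt{\mu_j(c)}$ and $\sqrt{\nu_j(c)}$ are asymptotically equally spaced and interlacing with a fixed gap; hence the discs $|z-\mu_j(c)|<|\mu_j(c)|^{-r}$ and $|z-\nu_j(c)|<|\nu_j(c)|^{-r}$ are eventually disjoint. Applying Lemma~\ref{lemesthp} (in the variable $\sqrt z$, where the zeros have growth exponent $<1$, indeed exponent $1$ for the eigenvalues themselves but $\frac12$ is the relevant one for $z$) to whichever of $\alpha,\beta$ has its zero-free disc containing $z$ yields \eqref{estcp}, and therefore the sum $|\alpha(z)|+|\beta(z)|$ satisfies the desired bound at $s=\frac12$ everywhere.

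With the corona estimate in hand, Theorem~\ref{thm:cor} with $s=\frac12$ produces $\gamma(z),\delta(z)\in R_{1/2}(\C)$ with $\gamma\beta-\delta\alpha=1$; setting $\theta(z,x)=\gamma(z)c(z,x)+\delta(z)s(z,x)$ and replacing $\gamma,\delta$ by their real parts $\frac12(\gamma(z)+\gamma(z^*)^*)$, $\frac12(\delta(z)+\delta(z^*)^*)$ as in Lemma~\ref{lem:theta} gives a real entire $\theta(.,x)\in R_{1/2}(\C)$ for every $x>0$ (using that $c(z,x),s(z,x)\in R_{1/2}(\C)$ by \cite[Lem.~9.18]{tschroe}), with $W(\theta(z),\phi(z))=1$. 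I expect the main obstacle to be the analysis near the positive real axis: one has to combine the two pieces of information — the exponential growth $\E^{c|\im\sqrt z|}$ from \eqref{asymphibes}, \eqref{asymphibes2} away from $\R_+$, and the Hadamard-product lower bound of Lemma~\ref{lemesthp} using the interlacing of $\mu_j(c)$ and $\nu_j(c)$ — into a single lower bound valid on all of $\C$, being careful that passing to the variable $w=\sqrt z$ correctly converts the exponent from $s=\frac12$ in $z$ to an admissible exponent in $w$ and that the exceptional discs of Lemma~\ref{lemesthp} for $\alpha$ and for $\beta$ never cover the same point.
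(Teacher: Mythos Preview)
Your overall strategy---verify the corona lower bound
\[
|\alpha(z)|+|\beta(z)| \ge b\,\E^{-a|z|^{1/2}}
\]
for $\alpha(z)=\phi(z,c)$, $\beta(z)=\phi'(z,c)$, then invoke Theorem~\ref{thm:cor} at $s=\tfrac12$---is exactly the paper's. The difficulty is in how you obtain that lower bound. Your ``delicate region'' argument near the positive real axis via Lemma~\ref{lemesthp} does not work: the Dirichlet eigenvalues $\mu_j(c)\sim(\pi j)^2$ have convergence exponent exactly $\rho=\tfrac12$, and Lemma~\ref{lemesthp} requires $\rho<s$, so it cannot deliver the borderline value $s=\tfrac12$ (this is precisely why Lemma~\ref{lem:cor} stops short at $s>\tfrac12$). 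Passing to the variable $w=\sqrt z$ does not help, since then the zeros $\sqrt{\mu_j(c)}\sim\pi j$ have exponent $1$, again outside the hypothesis $\rho<1$ of Lemma~\ref{lemesthp}.

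In fact no case split is needed. Your own inequality $|\sin w|^2+|\cos w|^2=\cosh(2\,\im w)\ge\tfrac12\E^{2|\im w|}\ge\tfrac12$ already controls the positive real axis: combined with the $o(\E^{c|\im\sqrt z|})$ error terms in \eqref{asymphibes}--\eqref{asymphibes2} it gives $|\alpha(z)|^2+|z|^{-1}|\beta(z)|^2\ge\tfrac14|z|^{-(l+1)}$ for all large $|z|$, which is more than enough. The paper does something slightly slicker: it forms the single linear combination
\[
\frac{1}{\sqrt z}\,\beta(z)-\I\,\alpha(z)=z^{-(l+1)/2}\bigl(\E^{-\I(\sqrt z\,c-l\pi/2)}+o(\E^{c|\im\sqrt z|})\bigr),
\]
whose leading term has modulus $\E^{c\,\im\sqrt z}=\E^{c|\im\sqrt z|}$ for $\im z\ge0$ and hence never drops below the error. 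This yields $|\alpha(z)|+|\beta(z)|\ge B(1+|z|)^{-(l+1)/2}\E^{c|\im\sqrt z|}\ge B\E^{-|\sqrt z|}$ directly, uniformly in the closed upper half-plane (and then by symmetry everywhere). Either route closes the gap; the point is that the sharp asymptotics \eqref{asymphibes}--\eqref{asymphibes2} already give the corona bound globally, and Lemma~\ref{lemesthp} is neither needed nor adequate here.
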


\begin{proof}
Let us take $c=1$ for notational simplicity.
For the function $\phi(z,x)$ we can take the one from \cite[Lem~2.2]{kst} (it is unique up to a constant by Remark~\ref{rem:uniq2}).
Then the estimates \eqref{asymphibes} and \eqref{asymphibes2} imply
\begin{align*}
\frac{1}{\sqrt{z}} \beta(z) - \I \alpha(z)&= \frac{1}{\sqrt{z}} \phi'(z,1) - \I \phi(z,1)\\
& = z^{-\frac{l+1}{2}}\left(\E^{-\I (\sqrt{z} - \frac{l \pi}{2})} + o\bigl(\E^{|\im(\sqrt{z})|}\bigr)\right).
\end{align*}
Hence, for $\im(z)\ge 0$ (implying $\im(\sqrt{z}) \ge0$) we obtain
\[
\frac{1}{\sqrt{z}} \beta(z) - \I \alpha(z) = z^{-\frac{l+1}{2}} \E^{\im(\sqrt{z})}
\left(\E^{-\I (\re(\sqrt{z}) - \frac{l \pi}{2})} + o(1)\right).
\]
Since $\alpha(z)$ and $\beta(z)$ have no common zeros this shows
\[
|\alpha(z)| + |\beta(z)| \ge B (1+|z|)^{-\frac{l+1}{2}} \E^{|\im(\sqrt{z})|} \ge B\E^{-|\sqrt{z}|}
\]
for $\im(z)\ge 0$. But since both sides of the inequality do not depend on the sign of $\im(z)$, the inequality
holds for all $z\in\C$. The claim now follows from this estimate as in the proof of Lemma~\ref{lem:cor}.
\end{proof}

In particular, the results of the previous sections apply to this example. For instance, Theorem~\ref{thmbm} now reads as follows.

\begin{theorem}
Let $q_0$ and $q_1$ be two potentials satisfying Hypothesis~\ref{hyp:q} with the same $l$ and
let $c\in(0,b)$. Choose $\phi_j(z,x)$ as in \eqref{asymphibes} and $\theta_j(.,x)\in R_{1/2}(\C)$ according to Lemma~\ref{lem:bes}.

Then, if $M_1(z)-M_0(z) = f(z) + O\big(\E^{-2(c-\eps) |\im(\sqrt{z})|}\big)$ for some $f(z)\in R_{1/2}(\C)$ and for every $\eps>0$ as $z\to\infty$ along any nonreal ray,
we have $q_0(x)=q_1(x)$ for $x\in(0,c)$.
\end{theorem}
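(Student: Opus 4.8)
The plan is to derive this statement as a direct specialization of the general Borg--Marchenko result in Theorem~\ref{thmbm} to the perturbed Bessel setting of Section~\ref{sec:ap}. First I would observe that by the preceding lemma (the one stating that Hypothesis~\ref{hyp:q} implies Hypothesis~\ref{hyp:ev}~(i') and (ii) for any $r\ge 0$), both potentials $q_0$ and $q_1$ satisfy Hypothesis~\ref{hyp:ev}~(i) and (ii), so Theorem~\ref{thmbm} is applicable. Since we are choosing $\phi_j(z,x)$ as in \eqref{asymphibes} and $\theta_j(.,x)\in R_{1/2}(\C)$ via Lemma~\ref{lem:bes}, we have $\phi_j(.,x),\theta_j(.,x)\in R_s(\C)$ with $s=\frac{1}{2}$, matching the hypothesis of Theorem~\ref{thmbm}.

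Next I would verify the two remaining hypotheses of Theorem~\ref{thmbm}. The normalization condition $\frac{\phi_1(z,x)}{\phi_0(z,x)}=1+o(1)$ follows because both $\phi_0$ and $\phi_1$ are normalized according to \eqref{asymphibes} with the \emph{same} value of $l$ (this is why the hypothesis that $q_0,q_1$ have the same $l$ is needed): the leading term $z^{-\frac{l+1}{2}}\sin(\sqrt z\,x-\frac{l\pi}{2})$ is identical, and the error terms are lower order in the sense controlled by \eqref{asymphibes}, so the ratio tends to $1$ as $|z|\to\infty$ along any nonreal ray. For the second condition, I would invoke Lemma~\ref{lem:phiev2}: since Hypothesis~\ref{hyp:ev}~(i') holds with $\Delta(x)=x$ (equivalently $\Delta_0=0$, as noted in the proof via the eigenvalue asymptotics of \cite[Thm.~2.5]{kst}), we have $\phi_0(z,c-\eps)=\E^{-(c-\eps)\sqrt{-z}+o(z/\im(\sqrt{-z}))}$, hence $\phi_0(z,c-\eps)^{-2}=O(\E^{2(c-\eps)\re(\sqrt{-z})+o(\cdots)})$, and since $\re(\sqrt{-z})=|\im(\sqrt z)|$ up to sign conventions on the branch cut, the hypothesis $M_1(z)-M_0(z)=f(z)+O(\E^{-2(c-\eps)|\im(\sqrt z)|})$ is (after absorbing the $o$-term by replacing $\eps$ with a slightly larger $\eps'$) exactly the condition $M_1(z)-M_0(z)=f(z)+O(\phi_0(z,c-\eps')^{-2})$ required in Theorem~\ref{thmbm}, along nonreal rays.

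Then I would note that since $s=\frac{1}{2}$, the requirement in Theorem~\ref{thmbm} that the nonreal rays dissect $\C$ into sectors of opening less than $\pi/s=2\pi$ is satisfied by \emph{any} pair of nonreal rays (indeed by a single nonreal ray together with its reflection, or even just by the statement ``along any nonreal ray''), so no extra care is needed about choosing many rays. Finally, applying Theorem~\ref{thmbm} directly yields $q_0(x)=q_1(x)$ for $x\in(0,c)$, which is the claim. The only genuinely delicate point is matching the two different parametrizations of the high-energy decay rate, namely reconciling $O(\phi_0(z,c-\eps)^{-2})$ with $O(\E^{-2(c-\eps)|\im(\sqrt z)|})$; this requires the sharp $o(z/\im(\sqrt{-z}))$ control in \eqref{phias2} and the usual bookkeeping that the $o$-term can be swallowed into an arbitrarily small enlargement of $\eps$, together with care about the branch cut conventions relating $\sqrt{-z}$ and $\sqrt z$. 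Everything else is a mechanical instantiation of results already established.
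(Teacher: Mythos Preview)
Your proposal is correct and matches the paper's approach: the paper presents this theorem without a separate proof, treating it as a direct specialization of Theorem~\ref{thmbm} once the preceding lemmas have verified Hypothesis~\ref{hyp:ev} and supplied $\phi_j,\theta_j\in R_{1/2}(\C)$. One small simplification: rather than invoking Lemma~\ref{lem:phiev2} (which carries an $o$-term in the exponent that you then have to absorb), you can read off the growth of $\phi_0(z,c-\eps)$ directly from \eqref{asymphibes}, which already gives $|\phi_0(z,x)|\asymp |z|^{-(l+1)/2}\E^{x|\im(\sqrt z)|}$ along nonreal rays and makes the translation between $O(\phi_0(z,c-\eps)^{-2})$ and $O(\E^{-2(c-\eps)|\im(\sqrt z)|})$ immediate up to a harmless polynomial factor.
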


\appendix

\section{The limit circle case}
\label{app:lc}

In this appendix we want to show that the singular Weyl $m$-function is a Herglotz--Nevanlinna function if one has the limit circle case at $a$.
We will use the same abbreviations as in Section~\ref{sec:swm}. The results should be compared with \cite{beev}, \cite{ful77}.

\begin{hypothesis}\label{hyp:qr}
Let $\phi_0(x)$ and $\theta_0(x)$ be two real-valued solutions of $\tau u = \lam_0 u$ for some fixed $\lam_0\in\R$
satisfying $W(\theta_0,\phi_0)=1$.

Assume that the limits
\be \label{limwr}
\lim_{x\to a} W_x(\phi_0,u(z)), \qquad \lim_{x\to a} W_x(\theta_0,u(z))
\ee
exist for every solution $u(z)$ of $\tau u = z u$.
\end{hypothesis}

\begin{remark}
Using the Pl\"ucker identity
\be\label{pluecker}
W_x(f_1,f_2) W_x(f_3,f_4) + W_x(f_1,f_3) W_x(f_4,f_2) + W_x(f_1,f_4) W_x(f_2,f_3) =0
\ee
it is not hard to see, that Hypothesis~\ref{hyp:qr} is independent of $\lam_0$.
\end{remark}

It is not hard to see that our hypothesis holds if $\tau$ is limit circle (l.c.) at $a$.

\begin{lemma}
Suppose $\tau$ is l.c.\ at $a$. Then Hypothesis~\ref{hyp:qr} holds. Moreover, the limits
\eqref{limwr} are holomorphic with respect to $z$ whenever $u(z,x)$ is.
\end{lemma}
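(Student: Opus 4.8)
The plan is to reduce the statement about the limit circle case to the existence of limits of Wronskians at $a$, for which the standard characterization of the limit circle case provides exactly what we need. First I would recall that $\tau$ being l.c.\ at $a$ means that \emph{every} solution of $\tau u = z u$ is square integrable near $a$ for one (and hence every) $z\in\C$; moreover, for two solutions $u(z)$ and $v(w)$ (of $\tau u = zu$ and $\tau v = wv$), the Wronskian $W_x(u,v)$ has a finite limit as $x\to a$. This is classical (see e.g.\ \cite[Chap.~9]{tschroe} or \cite{wdln}); the key point is that $W_x(u,v) - W_{x_0}(u,v) = (w-z)\int_{x_0}^x u(y) v(y)\, dy$, and the integral converges absolutely as $x\to a$ because both $u$ and $v$ lie in $L^2$ near $a$ in the l.c.\ case.

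Applying this with $u = \phi_0$ (a solution of $\tau u = \lam_0 u$) and $v = u(z)$ (an arbitrary solution of $\tau u = z u$), and likewise with $u = \theta_0$, gives the existence of both limits in \eqref{limwr}. Since $\phi_0$ and $\theta_0$ are both square integrable near $a$ in the l.c.\ case, Hypothesis~\ref{hyp:qr} holds. This disposes of the first assertion.

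For the holomorphy claim, fix a base point $x_0\in(a,b)$ and write, using the identity above,
\[
\lim_{x\to a} W_x(\phi_0,u(z)) = W_{x_0}(\phi_0,u(z)) + (\lam_0 - z)\int_a^{x_0} \phi_0(y)\, u(z,y)\, dy,
\]
and similarly for $\theta_0$. If $u(z,x)$ depends holomorphically on $z$ (locally uniformly in $x$, as is the case for solutions with $z$-independent or holomorphic initial data), then $W_{x_0}(\phi_0,u(z))$ is holomorphic in $z$, and the integral term is holomorphic as well: the integrand $\phi_0(y) u(z,y)$ is holomorphic in $z$ for each $y$, and one can dominate it uniformly for $z$ in compact sets by an $L^1(a,x_0)$ function using the standard a priori bounds on solutions together with $\phi_0\in L^2$ near $a$ and the Cauchy--Schwarz inequality (the relevant $L^2$-bound on $u(z,\cdot)$ near $a$ being locally uniform in $z$). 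Hence one may differentiate under the integral (or invoke Morera/Weierstrass), and the limit is holomorphic in $z$.

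The only mildly delicate point — and the one I would be most careful about — is the uniform $L^1$-domination of the integrand near $a$ needed to pass holomorphy through the limit $x\to a$; everything else is a direct invocation of the standard l.c.\ theory. In practice this is handled by the Gronwall-type estimate showing that $\|u(z,\cdot)\|_{L^2(a,x_0)}$ is bounded on compact $z$-sets once it is finite, which is precisely the content of the l.c.\ hypothesis being $z$-independent.
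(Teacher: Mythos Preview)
Your proposal is correct and follows essentially the same route as the paper: both use the Lagrange identity $W_x(u,v)-W_{x_0}(u,v)=(z-w)\int_{x_0}^x uv\,dy$ together with $L^2$-integrability of all solutions near $a$ to get existence of the limits, and then represent the limiting Wronskian as $W_{x_0}(\phi_0,u(z))$ plus an integral term whose holomorphy in $z$ follows from locally uniform domination of the integrand. The only differences are cosmetic: there is a sign slip in your displayed formula (it should be $W_{x_0}(\phi_0,u(z))-(\lam_0-z)\int_a^{x_0}\phi_0\,u(z)\,dy$, consistent with the paper), and for the domination the paper uses the pointwise bound $|u(z,x)|\le C_1|c(z_0,x)|+C_2|s(z_0,x)|$ (locally uniform in $z$, from the standard existence theory) rather than your Gronwall-type $L^2$ bound --- both yield the required $L^1$ majorant via Cauchy--Schwarz.
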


\begin{proof}
Given two solutions $u(x)$, $v(x)$ of $\tau u = zu$, $\tau v = \hat{z} v$ it
is straightforward to check
\be\label{wronski}
(z-\hat{z}) \int_c^x u(y) v(y) dy = W_x(u, v) - W_c(u, v)
\ee
(clearly it is true for $x=c$, now differentiate with respect to $x$).
Hence the limit exists whenever $u$ and $v$ are square integrable near $a$ and
the first claim follows.

Moreover, this also shows that
\[
\lim_{x\to a} W_x(\phi_0,u(z)) = W_c(\phi_0,u(z)) - (\lambda_0-z) \int_a^c \phi_0(y) u(z,y) dy
\]
and to see the second claim it remains to show that the integral on the right-hand side is if $u(z,x)$
is holomorphic in a neighborhood of $z_0$. Then for all $z$ in a bounded neighborhood of $z_0$
we can estimate
\[
|u(z,x)| \le C_1 |c(z_0,x)| + C_1 |s(z_0,x)|, \quad
|u'(z,x)| \le C_3 |c'(z_0,x)| + C_4 |s'(z_0,x)|.
\]
This follows, for example, by inspecting the proof of \cite[Thm.~9.9]{tschroe}. Hence the limits \eqref{limwr}
are holomorphic in the same domain as $u(z,x)$.
\end{proof}

It will be shown below in Corollary~\ref{cor:lc} that Hypothesis~\ref{hyp:qr} is in fact equivalent to $\tau$ being l.c. at $x=a$.

So let $\tau$ satisfy Hypothesis~\ref{hyp:qr} and introduce
\begin{align}\nn
\phi(z,x) &= W_a(c(z),\phi_0) s(z,x) - W_a(s(z),\phi_0) c(z,x),\\ \label{defptqr}
\theta(z,x) &= W_a(c(z),\theta_0) s(z,x) - W_a(s(z),\theta_0) c(z,x).
\end{align}
Here the solutions $c(z,x)$ and $s(z,x)$ are defined as in Section~\ref{sec:swm}.
Clearly $\phi(z,x)^* = \phi(z^*,x)$ and $\theta(z,x)^* = \theta(z^*,x)$. Moreover,
$\phi(\lam_0,x)=\phi_0(x)$ and $\theta(\lam_0,x)=\theta_0(x)$.

\begin{lemma}\label{lem:a.4}
Suppose Hypothesis~\ref{hyp:qr}. Then $\theta(z,x)$ and $\phi(z,x)$ defined in \eqref{defptqr} satisfy
\be
W(\theta(z),\phi(z))= 1
\ee
and
\begin{align}\nn
& W_a(\theta(z),\phi(\hat{z}))= 1, \\
& W_a(\phi(\hat{z}),\phi(z))=W_a(\theta(\hat{z}),\theta(z))=0.
\end{align}
\end{lemma}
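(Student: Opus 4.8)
The plan is to derive all four assertions from the Pl\"ucker identity \eqref{pluecker} together with the single normalization $W(\theta_0,\phi_0)=1$. The crucial preliminary step is to upgrade Hypothesis~\ref{hyp:qr}: although it only asserts that $W_x(\phi_0,u(z))$ and $W_x(\theta_0,u(z))$ converge as $x\to a$, in fact $W_x(u,v)$ converges as $x\to a$ for \emph{any} solution $u$ of $\tau u=z\,u$ and \emph{any} solution $v$ of $\tau v=\hat z\,v$. Indeed, applying \eqref{pluecker} with $(f_1,f_2,f_3,f_4)=(\phi_0,\theta_0,u,v)$ and using that $W_x(\phi_0,\theta_0)$ is constant in $x$ (both solve $\tau w=\lam_0 w$) and equals $W(\phi_0,\theta_0)=-1$, one obtains
\[
W_x(u,v)=W_x(\phi_0,v)\,W_x(\theta_0,u)-W_x(\phi_0,u)\,W_x(\theta_0,v),
\]
and the right-hand side converges by Hypothesis~\ref{hyp:qr}. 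Hence $W_a(u,v):=\lim_{x\to a}W_x(u,v)$ is a well-defined antisymmetric bilinear form on solutions, satisfying the same identity with every $W_x$ replaced by $W_a$.

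Next I would evaluate the four ``coordinates'' of $\phi(z,\cdot)$ and $\theta(z,\cdot)$ directly from \eqref{defptqr}. Since the coefficients $W_a(c(z),\phi_0)$, $W_a(s(z),\phi_0)$, $W_a(c(z),\theta_0)$, $W_a(s(z),\theta_0)$ in \eqref{defptqr} are constants, expanding by bilinearity and letting $x\to a$ gives
\[
W_a(\phi_0,\phi(z))=0,\qquad W_a(\theta_0,\theta(z))=0
\]
at once (the two terms cancel), whereas
\[
W_a(\theta_0,\phi(z))=W_a(c(z),\phi_0)\,W_a(\theta_0,s(z))-W_a(s(z),\phi_0)\,W_a(\theta_0,c(z))
\]
and the analogous expression for $W_a(\phi_0,\theta(z))$ are each evaluated by one more application of \eqref{pluecker}, now with $(f_1,f_2,f_3,f_4)=(\phi_0,\theta_0,c(z),s(z))$ and using $W_a(c(z),s(z))=W(c(z),s(z))=1$. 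The outcome is $W_a(\theta_0,\phi(z))=1$ and $W_a(\phi_0,\theta(z))=-1$ for every $z$ (consistent with $\phi(\lam_0,\cdot)=\phi_0$, $\theta(\lam_0,\cdot)=\theta_0$).

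Finally, the three $W_a$-identities drop out by substituting these values into the bilinear formula of the first paragraph; for instance
\[
W_a(\theta(z),\phi(\hat z))=W_a(\phi_0,\phi(\hat z))\,W_a(\theta_0,\theta(z))-W_a(\phi_0,\theta(z))\,W_a(\theta_0,\phi(\hat z))=0-(-1)\cdot 1=1,
\]
and likewise $W_a(\phi(\hat z),\phi(z))=0$ and $W_a(\theta(\hat z),\theta(z))=0$. For $W(\theta(z),\phi(z))=1$ one uses in addition that $\theta(z,\cdot)$ and $\phi(z,\cdot)$ both solve $\tau u=z\,u$, so $W_x(\theta(z),\phi(z))$ is independent of $x$ and hence equals its limit $W_a(\theta(z),\phi(z))=W_a(\phi_0,\phi(z))W_a(\theta_0,\theta(z))-W_a(\phi_0,\theta(z))W_a(\theta_0,\phi(z))=1$.

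The one point that will need care is the first paragraph: one must not take for granted that $W_a(u,v)$ even makes sense for solutions $u,v$ of equations with different spectral parameters (for which $W_x(u,v)$ genuinely depends on $x$), and the Pl\"ucker identity is precisely what reduces this to the limits supplied by Hypothesis~\ref{hyp:qr}. Everything after that is bookkeeping with $2\times2$ determinants.
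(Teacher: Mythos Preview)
Your proof is correct and rests on the same single ingredient as the paper's proof, namely the Pl\"ucker identity \eqref{pluecker} together with $W(\theta_0,\phi_0)=1$, so the two arguments are essentially equivalent. The only difference is organizational: the paper expands $\theta(z,x)$ and $\phi(\hat z,x)$ directly in the $c,s$-basis and applies Pl\"ucker twice to collapse the resulting sum (so cross-terms like $W_a(\hat s,s)$ appear as intermediates), whereas you first extract the ``coordinates'' $W_a(\phi_0,\phi(z))=0$, $W_a(\theta_0,\phi(z))=1$, $W_a(\phi_0,\theta(z))=-1$, $W_a(\theta_0,\theta(z))=0$ and then feed them into the master formula $W_a(u,v)=W_a(\phi_0,v)W_a(\theta_0,u)-W_a(\phi_0,u)W_a(\theta_0,v)$ from your first paragraph. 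Your organization is slightly cleaner in that the existence of limits like $W_a(c(\hat z),s(z))$ for $\hat z\ne z$ is made explicit up front (the paper handles this implicitly via the Remark preceding the lemma), but the mathematical content is the same.
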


\begin{proof}
It suffices to show the last two identities.
\begin{align} \nn
W_a(\theta,\hat\phi) =& -W_a(\hat{c},\phi_0) \big( W_a(c,\theta_0) W_a(\hat{s},s) - W_a(s,\theta_0) W_a(\hat{s},c)\big)\\ \nn
&+ W_a(\hat{s},\phi_0) \big( W_a(c,\theta_0) W_a(\hat{c},s) + W_a(s,\theta_0) W_a(\hat{c},c)\big)\\ \nn
=& - W_a(\hat{c},\phi_0) W_a(\hat{s},\theta_0) + W_a(\hat{s},\phi_0) W_a(\hat{c},\theta_0)\\
=&  W(\hat{c},\hat{s}) W(\theta_0,\phi_0) = 1,
\end{align}
where we have twice used the Pl\"ucker identity \eqref{pluecker}
which remains valid in the limit $x \to a$. Similarly,
\begin{align} \nn
W_a(\hat\phi,\phi) =& W_a(\hat{c},\phi_0) \big( W_a(c,\phi_0) W_a(\hat{s},s) - W_a(s,\phi_0) W_a(\hat{s},c)\big)\\ \nn
&- W_a(\hat{s},\phi_0) \big( W_a(c,\phi_0) W_a(\hat{c},s) - W_a(s,\phi_0) W_a(\hat{c},c)\big)\\
=& W_a(\hat{c},\phi_0) W_a(\hat{s},\phi_0) - W_a(\hat{s},\phi_0) W_a(\hat{c},\phi_0) =0
\end{align}
and likewise for $W_a(\hat\theta,\theta)$.
\end{proof}

\begin{corollary}\label{cor:lc}
Suppose Hypothesis~\ref{hyp:qr} then $\theta(z,x)$ and $\phi(z,x)$ defined in \eqref{defptqr} satisfy
\begin{align}\nn
W_c(\phi(z),\phi(z)^*) &= 2\I\, \im(z) \int_a^c |\phi(z,x)|^2 dx, \\
W_c(\theta(z),\theta(z)^*) &= 2\I\, \im(z) \int_a^c |\theta(z,x)|^2 dx.
\end{align}

In particular $\tau$ is l.c.\ at $a$ and $\theta(z,x)$, $\phi(z,x)$ are entire with respect to $z$.
\end{corollary}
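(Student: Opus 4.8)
The plan is to obtain both Wronskian identities from the elementary relation \eqref{wronski}, using Lemma~\ref{lem:a.4} to control the boundary behaviour at $a$, and then to read off the limit circle property and the entireness of $\phi$ and $\theta$ as consequences. To this end I would fix $z\in\C\setminus\R$ and apply \eqref{wronski} with $u(x)=\phi(z,x)$ and $v(x)=\phi(z,x)^*=\phi(z^*,x)$, which solve $\tau u=zu$ and $\tau v=z^*v$ respectively. Since $z-z^*=2\I\,\im(z)$ and $u(y)v(y)=|\phi(z,y)|^2$, this reads
\[
2\I\,\im(z)\int_x^c|\phi(z,y)|^2\,dy = W_c(\phi(z),\phi(z)^*) - W_x(\phi(z),\phi(z)^*), \qquad x\in(a,c).
\]
Now I would let $x\downarrow a$. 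By Lemma~\ref{lem:a.4}, applied with $\hat z=z^*$ and recalling $\phi(z^*,x)=\phi(z,x)^*$, the limit $W_a(\phi(z),\phi(z)^*)=0$ exists; hence the right-hand side converges, and so the left-hand side, which is nondecreasing as $x\downarrow a$, converges as well. This simultaneously shows that $\phi(z,\cdot)$ is square integrable near $a$ and gives $W_c(\phi(z),\phi(z)^*)=2\I\,\im(z)\int_a^c|\phi(z,x)|^2\,dx$. The identical argument with $\theta(z,x)$ in place of $\phi(z,x)$, now using $W_a(\theta(z),\theta(z)^*)=0$ from Lemma~\ref{lem:a.4}, yields the second identity together with $\theta(z,\cdot)\in L^2$ near $a$; for real $z$ both identities are trivial since then $\phi(z,x)^*=\phi(z,x)$ and $\theta(z,x)^*=\theta(z,x)$.

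For the remaining assertions, I would fix again $z\in\C\setminus\R$: then $\phi(z,\cdot)$ and $\theta(z,\cdot)$ are both square integrable near $a$ and linearly independent, since $W(\theta(z),\phi(z))=1$ by Lemma~\ref{lem:a.4}. Hence every solution of $\tau u=zu$ is square integrable near $a$, i.e.\ $\tau$ is limit circle at $a$. Once this is known, the lemma proved earlier in this appendix shows that the boundary limits in \eqref{limwr} depend holomorphically on $z$ whenever $u(z,x)$ does; applying this to $u=c(z,\cdot)$ and $u=s(z,\cdot)$, which are entire in $z$, the coefficients $W_a(c(z),\phi_0)$, $W_a(s(z),\phi_0)$, $W_a(c(z),\theta_0)$, $W_a(s(z),\theta_0)$ appearing in \eqref{defptqr} are all entire (up to sign they are the limits \eqref{limwr}), and therefore so are $\phi(z,x)$ and $\theta(z,x)$.

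The one genuinely delicate point is the existence of $\lim_{x\downarrow a}W_x(\phi(z),\phi(z)^*)$ (and its analogue for $\theta$). This cannot be extracted from \eqref{wronski} itself, for by that identity the existence of this limit is equivalent to the square integrability of $\phi(z,\cdot)$ near $a$ — precisely one of the things we are trying to prove. It is instead furnished by Lemma~\ref{lem:a.4}, whose proof rewrites the boundary Wronskian in terms of the quantities $W_a(c(z),\phi_0),\dots$, which exist by Hypothesis~\ref{hyp:qr}, and then uses the Pl\"ucker identity \eqref{pluecker} to recombine them into expressions that remain finite in the limit $x\to a$. Everything else is routine bookkeeping.
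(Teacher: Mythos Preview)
Your argument is correct and follows exactly the paper's route: apply \eqref{wronski} to the pair $\phi(z),\phi(z)^*$ (resp.\ $\theta(z),\theta(z)^*$), let $x\to a$, and invoke Lemma~\ref{lem:a.4} for the vanishing boundary term. Your additional explanation of why the limit $W_a(\phi(z),\phi(z)^*)$ actually exists (via the Pl\"ucker reduction to quantities covered by Hypothesis~\ref{hyp:qr}) and your derivation of entireness from the earlier lemma on holomorphy of the limits \eqref{limwr} simply spell out what the paper leaves implicit.
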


\begin{proof}
Choose $u=\phi(z)$ and $v= \phi(z)^*$ in \eqref{wronski}, let $x\to a$ and use Lemma \ref{lem:a.4} for the first formula.
For the second replace $\phi(z)$ by $\theta(z)$.
\end{proof}

\begin{lemma}
Suppose Hypothesis~\ref{hyp:qr} and let $H$ be some self-adjoint operator associated with $\tau$ and the
boundary condition induced by $\phi_0$ at $a$.

Then $\phi(z,x)$ defined in \eqref{defptqr} lies in the domain of $H$ near $a$, that is, $W_a(\phi(z),\phi_0)=0$.
Moreover,
\be \label{mmlc}
m_-(z)= \frac{W_a(\phi_0,c(z))}{W_a(\phi_0,s(z))}.
\ee
\end{lemma}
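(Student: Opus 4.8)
The plan is to treat the two assertions separately, both by elementary manipulations of the limiting Wronskian $W_a(\cdot,\cdot)=\lim_{x\to a}W_x(\cdot,\cdot)$, which exists by Hypothesis~\ref{hyp:qr} and is bilinear and antisymmetric. First I would verify $W_a(\phi(z),\phi_0)=0$ directly from the definition \eqref{defptqr}: since $W_a(c(z),\phi_0)$ and $W_a(s(z),\phi_0)$ are scalars, bilinearity gives
\[
W_a(\phi(z),\phi_0)=W_a(c(z),\phi_0)\,W_a(s(z),\phi_0)-W_a(s(z),\phi_0)\,W_a(c(z),\phi_0)=0 .
\]
By Corollary~\ref{cor:lc}, $\tau$ is limit circle at $a$, so every solution of $\tau u=zu$ — in particular $\phi(z,\cdot)$ — is square integrable near $a$; together with the relation $W_a(\phi(z),\phi_0)=0$, which is exactly the separated boundary condition at the limit circle endpoint $a$ induced by $\phi_0$, this shows that $\phi(z,\cdot)$ lies in the domain of $H$ near $a$.

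For the second assertion, recall that $m_-(z)$ is characterized (for $z\notin\sig(H^D_{(a,c)})$) by the requirement that the Weyl solution $u_-(z,x)=c(z,x)-m_-(z)s(z,x)$ from \eqref{defupm} lie in the domain of $H$ near $a$, i.e.\ that $W_a(u_-(z),\phi_0)=0$. Expanding by bilinearity yields $W_a(c(z),\phi_0)-m_-(z)W_a(s(z),\phi_0)=0$. Here $W_a(s(z),\phi_0)\neq0$ whenever $z\notin\sig(H^D_{(a,c)})$, since $W_a(s(z),\phi_0)=0$ would mean $s(z,\cdot)$ satisfies both the boundary condition at $a$ and $s(z,c)=0$, forcing $z\in\sig(H^D_{(a,c)})$; hence we may solve for $m_-(z)$ and, using the antisymmetry of $W_a$ in the numerator and denominator, obtain \eqref{mmlc} (as an identity of meromorphic functions, the poles of $m_-$ being accounted for by the zeros of $W_a(\phi_0,s(z))$). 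Equivalently, one can note that $\phi(z,\cdot)$ and $u_-(z,\cdot)$ both span the one-dimensional space of solutions obeying the boundary condition at $a$, hence are proportional, and read off $m_-(z)$ by comparing coefficients of the linearly independent solutions $c(z,\cdot)$ and $s(z,\cdot)$ in \eqref{defptqr}.

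Since each step is a routine Wronskian identity, no genuine difficulty arises; the only point worth stating carefully is the identification of the phrase "lies in the domain of $H$ near $a$" with the vanishing of $W_a(\cdot,\phi_0)$, which rests on the limit circle conclusion of Corollary~\ref{cor:lc} and on the fact that at a limit circle endpoint the separated boundary conditions are precisely those of this Wronskian form.
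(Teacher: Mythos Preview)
Your proof is correct and follows essentially the same approach as the paper: a direct Wronskian computation for $W_a(\phi(z),\phi_0)=0$, and the identity $0=W_a(\phi_0,u_-(z))=W_a(\phi_0,c(z))-m_-(z)W_a(\phi_0,s(z))$ for \eqref{mmlc}. You simply supply more detail (the non-vanishing of the denominator and the identification of the boundary condition via Corollary~\ref{cor:lc}) than the paper's terse two-line proof.
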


\begin{proof}
First of all it is easy to check $W_a(\phi(z),\phi_0)=0$. Moreover, using $0 = W_a(\phi_0,u_-(z))= W_a(\phi_0,c(z)) - m_-(z)
W_a(\phi_0,s(z))$ shows \eqref{mmlc}.
\end{proof}

Now we can introduce the singular Weyl $m$-function $M(z)$ as in Section~\ref{sec:swm} such that
\be\label{defswmlc}
\psi(z,x) = \theta(z,x) + M(z) \phi(z,x) \in L^2(c,b)
\ee
and $\psi(z,x)$ satisfies the boundary condition of $H$ at $b$ if $\tau$ is l.c.\ at $b$.

\begin{theorem}\label{thmMlc}
Suppose Hypothesis~\ref{hyp:qr} and let $H$ be some self-adjoint operator associated with $\tau$ and the
boundary condition induced by $\phi_0$ at $a$.

The singular Weyl $m$-function defined in \eqref{defswmlc} is a Herglotz--Nevanlinna function and satisfies
\be\label{imM}
\im( M(z) ) =  \im(z) \int_a^b |\psi(z,x)|^2 dx.
\ee
\end{theorem}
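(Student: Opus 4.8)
The plan is to derive the identity \eqref{imM} from the Lagrange identity \eqref{wronski} applied to $u=\psi(z,\cdot)$ and $v=\psi(z,\cdot)^*=\psi(z^*,\cdot)$ on all of $(a,b)$, and then to read off the Herglotz--Nevanlinna property from \eqref{imM} together with what is already known: by Corollary~\ref{cor:lc} the solutions $\phi(z,x)$, $\theta(z,x)$ from \eqref{defptqr} are entire and satisfy $\phi(z,x)^*=\phi(z^*,x)$, $\theta(z,x)^*=\theta(z^*,x)$, so the Lemma in Section~\ref{sec:swm} applies and $M(z)$ is analytic in $\C\setminus\R$ with $M(z)=M(z^*)^*$.

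First I would record that $\psi(z,\cdot)\in L^2(a,b)$: it lies in $L^2(c,b)$ by construction \eqref{defswmlc}, and near $a$ it is square integrable because $\tau$ is l.c.\ at $a$ by Corollary~\ref{cor:lc}, so that $\phi(z,\cdot)$ and $\theta(z,\cdot)$ — hence $\psi(z,\cdot)$ — are in $L^2$ near $a$. Thus $\psi(z,\cdot)$ and $\psi(z^*,\cdot)$ belong to the maximal domain of $\tau$ near each endpoint, and in particular the limiting Wronskians below exist. Letting the endpoints tend to $a$ and $b$ in \eqref{wronski} (with $\hat z=z^*$) gives
\[
\lim_{x\to b}W_x(\psi(z),\psi(z)^*)-\lim_{x\to a}W_x(\psi(z),\psi(z)^*)=(z-z^*)\int_a^b|\psi(z,x)|^2\,dx .
\]
The boundary term at $b$ vanishes: if $\tau$ is l.p.\ at $b$ this is the usual vanishing of the Wronskian of two maximal-domain elements there, and if $\tau$ is l.c.\ at $b$ it is because $\psi(z,\cdot)$ and $\psi(z^*,\cdot)$ satisfy the same separated self-adjoint boundary condition of $H$ at $b$.

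The boundary term at $a$ is where the construction \eqref{defptqr} pays off. Expanding $\psi(z)=\theta(z)+M(z)\phi(z)$ and $\psi(z)^*=\theta(z^*)+M(z^*)\phi(z^*)$ and using the relations from Lemma~\ref{lem:a.4} — namely $W_a(\theta(z),\phi(z^*))=1$ (hence, relabelling, $W_a(\phi(z),\theta(z^*))=-1$), $W_a(\phi(z),\phi(z^*))=0$ and $W_a(\theta(z),\theta(z^*))=0$ — one obtains
\[
\lim_{x\to a}W_x(\psi(z),\psi(z)^*)=M(z^*)-M(z)=M(z)^*-M(z)=-2\I\,\im(M(z)),
\]
using $M(z)^*=M(z^*)$. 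Substituting this and $z-z^*=2\I\,\im(z)$ into the displayed identity yields exactly \eqref{imM}.

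Finally, since $W(\theta(z),\phi(z))=1$ forces $\theta(z,\cdot)$ and $\phi(z,\cdot)$ to be linearly independent, $\psi(z,\cdot)$ is a nontrivial solution, so $\int_a^b|\psi(z,x)|^2\,dx>0$; hence \eqref{imM} shows that $\im(M(z))$ and $\im(z)$ have the same sign. Combined with the analyticity of $M$ in $\C\setminus\R$ and $M(z)=M(z^*)^*$ this is precisely the statement that $M$ is a Herglotz--Nevanlinna function. The only genuine subtlety in the argument is the endpoint $b$ — the l.p.\ versus l.c.\ case distinction and the existence of the limiting Wronskian there; the rest is Green's-identity bookkeeping made transparent by Lemma~\ref{lem:a.4}.
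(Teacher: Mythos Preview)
Your proof is correct and follows essentially the same approach as the paper's: apply the Lagrange identity \eqref{wronski} to $\psi(z)$ and $\psi(z)^*$, use Lemma~\ref{lem:a.4} to evaluate the Wronskian at $a$ as $M(z^*)-M(z)$, and note that the Wronskian at $b$ vanishes because both functions lie in the domain of $H$ near $b$. Your write-up is in fact more explicit than the paper's (which compresses everything into one displayed line), spelling out the $L^2$-membership of $\psi$, the l.p./l.c.\ dichotomy at $b$, and the final passage from \eqref{imM} to the Herglotz--Nevanlinna property.
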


\begin{proof}
Now choose $c=a$, $u(x)=\psi(z,x)$ and $v(x)= \psi(z,x)^*$ in \eqref{wronski},
\[
-2\im(z) \int_a^x |\psi(z,y)|^2 dy = -\I W_x(\psi(z), \psi(z)^*) - 2 \im(M(z)),
\]
and observe that $W_x(\psi(z), \psi(z)^*)$ vanishes as $x\uparrow b$ since
both functions are in the domain of $H$ near $b$.
\end{proof}

Moreover, we can also strengthen Lemma~\ref{lemUub}.

\begin{lemma}
Suppose Hypothesis~\ref{hyp:qr} and let $H$ be some self-adjoint operator associated with $\tau$ and the
boundary condition induced by $\phi_0$ at $a$. Let $U$ be the associated spectral transformation as in Section~\ref{sec:st}.

Then
\be\label{Upsi}
(U \psi(z,.))(\lam) = \frac{1}{\lam-z}
\ee
for every $z\in\C\setminus\sig(H)$. Differentiating with respect to $z$ we even obtain
\be
(U \partial_z^k \psi(z,.))(\lam) = \frac{k!}{(\lam-z)^{k+1}}.
\ee
\end{lemma}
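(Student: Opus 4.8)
The plan is to exploit the key new feature of the limit circle case: by Corollary~\ref{cor:lc} every solution is square integrable near $a$, so $\theta(z,\cdot)$ — and hence $\psi(z,\cdot)=\theta(z,\cdot)+M(z)\phi(z,\cdot)$ — is square integrable near $a$, while it is square integrable near $b$ by its very definition \eqref{defswmlc}. Thus $\psi(z,\cdot)\in L^2(a,b)$ for every $z\in\C\setminus\sig(H)$ and $U\psi(z,\cdot)$ makes sense in $L^2(\R,d\rho)$; the task is to identify it with $\lam\mapsto(\lam-z)^{-1}$.

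First I would record a purely algebraic Green's function identity: for each fixed $x\in(a,b)$,
\[
\chi_{(a,x)}(\cdot)\,\psi(z,\cdot)=\psi(z,\cdot)+\psi'(z,x)\,G(z,x,\cdot)-\psi(z,x)\,\partial_x G(z,x,\cdot)
\]
as elements of $L^2(a,b)$. This is checked by inspecting the two cases $y<x$ and $y>x$ in \eqref{defgf}, using only $W(\phi(z),\psi(z))=W(\phi(z),\theta(z))=-1$: the combination $\psi'(z,x)G(z,x,y)-\psi(z,x)\partial_x G(z,x,y)$ equals $-\psi(z,y)$ for $y>x$ and $0$ for $y<x$, i.e.\ equals $-\chi_{(x,b)}(y)\psi(z,y)$. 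Applying $U$ and using Lemma~\ref{lemUub} (which gives $UG(z,x,\cdot)=\phi(\lam,x)/(\lam-z)$ and $U\partial_x G(z,x,\cdot)=\phi'(\lam,x)/(\lam-z)$) yields
\[
\big(U[\chi_{(a,x)}\psi(z,\cdot)]\big)(\lam)=\big(U\psi(z,\cdot)\big)(\lam)+\frac{W_x(\phi(\lam),\psi(z))}{\lam-z}.
\]

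Now I would let $x\downarrow a$. On one hand $\|\chi_{(a,x)}\psi(z,\cdot)\|_{L^2(a,b)}^2=\int_a^x|\psi(z,y)|^2\,dy\to0$ (again using $\psi(z,\cdot)\in L^2$ near $a$), so, $U$ being unitary, the left-hand side tends to $0$ in $L^2(\R,d\rho)$; hence $W_x(\phi(\lam),\psi(z))/(\lam-z)$ converges in $L^2(\R,d\rho)$ to $-\big(U\psi(z,\cdot)\big)(\lam)$. On the other hand, for each fixed $\lam$ the limit $W_a(\phi(\lam),\psi(z))$ exists (both $\phi(\lam,\cdot)$ and $\psi(z,\cdot)$ are square integrable near $a$, cf.\ the integral formula \eqref{wronski}) and by Lemma~\ref{lem:a.4} it equals $W_a(\phi(\lam),\theta(z))+M(z)W_a(\phi(\lam),\phi(z))=-1+0=-1$. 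Passing to a subsequence $x_n\downarrow a$ along which the $L^2$-convergence is also pointwise $\rho$-a.e.\ and matching the two limits gives $\big(U\psi(z,\cdot)\big)(\lam)=1/(\lam-z)$ for $\rho$-a.e.\ $\lam$, which is \eqref{Upsi}. The formula for $\partial_z^k\psi(z,\cdot)$ then follows by differentiating \eqref{Upsi} in $z$: since $\sig(H)=\supp\rho$ is closed, $z\mapsto(\lam-z)^{-1}$ is a holomorphic $L^2(\R,d\rho)$-valued map on $\C\setminus\sig(H)$, hence so is $z\mapsto\psi(z,\cdot)=U^{-1}\big((\lam-z)^{-1}\big)$ into $L^2(a,b)$, and $U$ commutes with $\partial_z$.

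The main obstacle is the step $x\downarrow a$, where one must make sure that what survives in the limit is the nonvanishing value $W_a(\phi(\lam),\psi(z))=-1$ rather than a term that degenerates (as $\phi(z,x)\to0$ typically does); this is precisely why the identity is written with the derivative combination $\psi'(z,x)G-\psi(z,x)\partial_xG$ instead of simply relating $G(z,x,\cdot)$ to $\phi(z,x)\psi(z,\cdot)$. Once the $L^2$-convergence of the left-hand side is in place, combining it with the elementary pointwise limit of the Wronskians finishes the argument without any further estimates.
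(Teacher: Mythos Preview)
Your proof is correct and is essentially the paper's own argument with a cosmetic variation: the paper forms $\ti\psi(z,x,y)=W_x(\theta(z,\cdot),G(z,\cdot,y))$, which equals $\psi(z,y)$ for $y>x$ and $M(z)\phi(z,y)$ for $y<x$, computes $(U\ti\psi(z,x,\cdot))(\lam)=W_x(\theta(z),\phi(\lam))/(\lam-z)$ via Lemma~\ref{lemUub}, and lets $x\to a$ using Lemma~\ref{lem:a.4}. You instead take the Wronskian combination with $\psi(z,\cdot)$ rather than $\theta(z,\cdot)$, which shifts the $L^2$-limit from $\ti\psi\to\psi$ to $\chi_{(a,x)}\psi\to 0$, but the mechanism (Lemma~\ref{lemUub} plus the boundary Wronskian identities of Lemma~\ref{lem:a.4}) is identical.
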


\begin{proof}
From Lemma~\ref{lemUub} we obtain
\[
(U \ti{\psi}(z,x,.))(\lam) = \frac{W_x(\theta(z),\phi(\lam))}{\lam-z},
\]
where
\[
\ti{\psi}(z,x,y) = W_x(\theta(z,.),G(z,.,y)) =\begin{cases} \psi(z,y), & y > x,\\
M(z) \phi(z,y), & y < x.
\end{cases}
\]
Now the claim follows by letting $x\to a$ using Lemma~\ref{lem:a.4}.
\end{proof}

The integral representation of $M(z)$ is given by:

\begin{corollary}
Under the same assumptions as in Theorem~\ref{thmMlc} we have
\be
M(z) = \re(M(\I)) + \int_\R \left(\frac{1}{\lam-z} - \frac{\lam}{1+\lam^2}\right) d\rho(\lam),
\ee
where $\rho$ is the spectral measure defined in Section~\ref{sec:st} which satisfies
$\int_\R d\rho(\lam)=\infty$ and $\int_\R \frac{d\rho(\lam)}{1+\lam^2}<\infty$.
\end{corollary}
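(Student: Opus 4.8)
The strategy is to identify every ingredient of the Herglotz--Nevanlinna representation of $M(z)$ furnished by Theorem~\ref{thmMlc} directly in terms of the spectral transformation $U$. I would begin by noting that, since $\tau$ is l.c.\ at $a$, the function $\psi(\I,\cdot)$ lies in $L^2(a,b)$: it is square integrable near $b$ by construction and near $a$ because every solution is. By the preceding lemma $(U\psi(z,\cdot))(\lam)=(\lam-z)^{-1}$, so unitarity of $U$ gives, for every $z\in\C\setminus\R$,
\[
\int_a^b|\psi(z,x)|^2\,dx=\int_\R\frac{d\rho(\lam)}{|\lam-z|^2}.
\]
In particular $\int_\R(1+\lam^2)^{-1}d\rho(\lam)=\int_a^b|\psi(\I,x)|^2\,dx<\infty$, so the function $N(z):=\int_\R\bigl(\tfrac{1}{\lam-z}-\tfrac{\lam}{1+\lam^2}\bigr)d\rho(\lam)$ is well defined and holomorphic on $\C\setminus\R$, with $\im N(z)=\im(z)\int_\R|\lam-z|^{-2}d\rho(\lam)$.

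Combining this with the identity $\im M(z)=\im(z)\int_a^b|\psi(z,x)|^2\,dx$ from \eqref{imM} and the displayed formula shows $\im M(z)=\im N(z)$ for all $z\in\C\setminus\R$. Hence $M-N$ is holomorphic on $\C\setminus\R$ with vanishing imaginary part, so by the open mapping theorem it is a real constant on each of the two half-planes; the symmetries $M(z^*)=M(z)^*$ and $N(z^*)=N(z)^*$ (the latter because $\rho$ is a measure on $\R$) force these two constants to coincide. Evaluating at $z=\I$, where $\tfrac{1}{\lam-\I}-\tfrac{\lam}{1+\lam^2}=\tfrac{\I}{1+\lam^2}$ so that $N(\I)$ is purely imaginary, identifies the constant as $\re(M(\I))$. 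This is exactly the asserted integral representation, and the convergence $\int_\R(1+\lam^2)^{-1}d\rho<\infty$ was established above.

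It remains to prove $\int_\R d\rho=\infty$, which is the one step requiring a genuine idea. I would argue by contradiction: if $\rho(\R)<\infty$, then the constant function $1$ belongs to $L^2(\R,d\rho)$, so $g:=U^{-1}1\in L^2(a,b)$ is a nonzero function with $\hat g\equiv1$. Since $U$ diagonalizes $H$, for $z\in\C\setminus\R$ we obtain $(H-z)^{-1}g=U^{-1}\bigl(\tfrac{\hat g(\lam)}{\lam-z}\bigr)=U^{-1}\bigl(\tfrac{1}{\lam-z}\bigr)=\psi(z,\cdot)$, using the preceding lemma again. Thus $\psi(z,\cdot)\in\dom(H)$ and therefore satisfies the boundary condition of $H$ at $a$, i.e.\ $W_a(\psi(z),\phi_0)=0$; together with $W_a(\phi(z),\phi_0)=0$ and $\psi=\theta+M\phi$ this gives $W_a(\theta(z),\phi_0)=0$. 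But then $W_a(\phi(z),\phi_0)=W_a(\theta(z),\phi_0)=0$, whereas $W_a(\theta(z),\phi(z))=1$ and $W_a(\theta_0,\phi_0)=1$, which is impossible by the Pl\"ucker identity \eqref{pluecker} applied to $\theta(z),\phi(z),\phi_0,\theta_0$ (whose three terms then evaluate to $-1$, $0$, $0$). Hence $\int_\R d\rho=\infty$. I expect this final contradiction to be the main obstacle; the rest is routine manipulation of the Herglotz structure together with the spectral transformation.
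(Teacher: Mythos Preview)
Your proof is correct and follows essentially the paper's route: identify $\im M(z)$ via unitarity of $U$ and $(U\psi(z,\cdot))(\lam)=(\lam-z)^{-1}$, then fix the real constant at $z=\I$; the two integrability conditions come from $\psi(z,\cdot)\in L^2(a,b)$ and $\psi(z,\cdot)\notin\dom(H)$, respectively. The only difference is that the paper dispatches $\int_\R d\rho=\infty$ in one line---since $H$ is self-adjoint and $z\notin\R$, $\psi(z,\cdot)\in\dom(H)$ would force $z$ to be a nonreal eigenvalue, so $\psi(z,\cdot)\notin\dom(H)$, which under $U$ means $\lam(\lam-z)^{-1}\notin L^2(\R,d\rho)$, i.e.\ $\int_\R d\rho=\infty$---so your Pl\"ucker computation, while correct, is more than is needed.
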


\begin{proof}
Evaluating the right-hand side of \eqref{imM} using unitarity of $U$ and \eqref{Upsi} shows that both
sides have the same imaginary part. Since the real parts coincide at $z=\I$, both sides are equal.
Moreover, $\int_\R d\rho(\lam)=\infty$ and $\int_\R \frac{d\rho(\lam)}{1+\lam^2}<\infty$ follow from
$\psi(z) \not \in \dom(H)$ and $\psi(z) \in L^2(a,b)$, respectively.
\end{proof}

\begin{remark}
If $\tau$ is regular at $a$ we can choose $c=a$ and the approach presented here reduces
to the usual one (cf. \cite[Sect.~9.3]{tschroe}).
\end{remark}

\section{Another example}
\label{app:ex}

Our next example is connected with a class of explicit soliton-type solutions of  $\tau u= z u$,
and we again set $(a,b)=(0,\infty)$.
The constructions are similar to the ones in Theorem~1.2 and Proposition~1.3 of \cite{gks}, the main
difference being that here we will be interested in solutions with singularities at  $x=0$.

For simplicity we consider the scalar case though  formulas
  \eqref{BD2},  \eqref{BD3}, and the first equality in  \eqref{BD4} below are written in the standard form which holds also
for a general construction, where $A$ and $S$ are matrices; see \cite{gks}.

Let a vector $\begin{bmatrix} \upsilon_1 & \upsilon_2 \end{bmatrix} \in\C^2$ be given such that
\be \label{BD1}
0= \begin{bmatrix} \upsilon_1 & \upsilon_2 \end{bmatrix} J \begin{bmatrix} \upsilon_1 & \upsilon_2 \end{bmatrix}^*, \quad
J= \begin{bmatrix}
0 & 1 \\ -1 & 0
\end{bmatrix},
\ee
that is,
\[
\upsilon_1\upsilon_2^*=\upsilon_2\upsilon_1^*.
\]

Fix $A\in \C\setminus\{0\}$ and define the vector function  $\Lam(x)=\begin{bmatrix}\Lam_1(x) & \Lam_2(x) \end{bmatrix}$
as the solution of the linear equations with constant coefficients
\be \label{BD2}
\begin{cases}
\Lambda_1^{\prime  }(x)=  A  \Lambda_{2}  (x), \\
\Lambda_{2}^{\prime  }(x)=  -  \Lambda_{1}(x),
\end{cases}
\ee
which satisfies the following initial condition:
\be\label{eq_ic}
\Lam(0)=\begin{bmatrix} \upsilon_1 & \upsilon_2 \end{bmatrix}.
\ee
Define also the function
\be\label{BD2b}
S(x)=\int_0^x | \Lambda_{2}(t)|^{2}dt.
\ee
It follows from  \eqref{BD1}--\eqref{BD2b} that the identity
\be \label{BD3}
AS(x)-S(x)A^*=\Lam(x)J\Lam(x)^*, \qquad x \geq 0,
\ee
holds. Furthermore, it is straightforward to check that
\begin{align} \label{BD9}
\Lam_1(x)&= \upsilon_1\cos (\sqrt{A} x) +\upsilon_2\sqrt{A}\sin (\sqrt{A} x),\\
\Lam_2(x)&= \upsilon_2\cos (\sqrt{A} x) -\frac{\upsilon_1}{\sqrt{A}}\sin (\sqrt{A} x). \label{BD10}
\end{align}
For $x>0$  introduce the transfer matrix function $w_{A}$
in L. Sakhnovich form
\begin{equation}  \label{BD4}
w_{A}(z,x)=I_{2}+J \Lambda (x)^{*}S(x)^{-1}
(z - A)^{-1} \Lambda (x)
=\begin{bmatrix}
1+\frac{\Lam_2(x)^*\Lam_1(x)}{S(x)(z-A)} & \frac{|\Lam_2(x)|^2}{S(x)(z-A)}  \\  -\frac{|\Lam_1(x)|^2}{S(x)(z-A)}  & 1-\frac{\Lam_1(x)^*\Lam_2(x)}{S(x)(z-A)}
\end{bmatrix},
\end{equation}
where $I_2$ is the $2 \times 2$ identity matrix. Clearly, $w_A$ is well defined since $S(x)$ is positive for all $x>0$.
Then the potential $q$ corresponding to our triplet $\{\Lam_1(x),\Lam_2(x),S(x)\}$ is given by the formula
\be  \label{BD5}
q(x)= 2 \Big( \big(S(x)^{-1}|\Lambda_{2}(x)|^2\big)^2
+ S(x)^{-1}\big(\Lambda_{1}(x)^{*}\Lambda_{2}(x)+ \Lambda_{2}(x)^{*}
\Lambda_{1}(x) \big)\Big).
\ee
Introduce
\begin{equation}  \label{BD7}
T(z,x):=T_1(z)\begin{bmatrix}
\E^{\I\sqrt{z}x} & 0 \\ 0 & \E^{-\I\sqrt{z}x}
\end{bmatrix} T_1(z)^{-1},
\quad
T_{1}( z )=
\begin{bmatrix}
1 & 1  \\  \I \sqrt{z}  & -\I \sqrt{z}
\end{bmatrix}.
\end{equation}
Using \eqref{BD2}, \eqref{BD3}, \eqref{BD4}, and \eqref{BD5} one can show \cite{gks}
that the vector function
\begin{align}
y(z,x)=&
\begin{bmatrix} 1 & 0 \end{bmatrix}
w_{A}(z,x)T(z,x)\nn \\
=&\Big[1+\frac{\Lam_2(x)^*\Lam_1(x)}{S(x)(z-A)}\qquad \frac{|\Lam_2(x)|^2}{S(x)(z-A)}\Big] T(z,x),\label{BD6}\\
=&\begin{bmatrix} 1 & 0 \end{bmatrix}T(z,x)+\frac{\Lam_2(x)^*}{S(x)(z-A)}[\Lam_1(x)\ \ \Lam_2(x)]T(z,x)  \nn
\end{align}
satisfies $\tau u= z u$.
Note that
\be\label{BD7c}
T(z,x)=\begin{bmatrix}
\cos(\sqrt{z}x) & \sin(\sqrt{z}x)/\sqrt{z}  \\  -\sqrt{z}\sin(\sqrt{z}x)  & \cos(\sqrt{z}x)
\end{bmatrix}
\ee
is an entire matrix function of $z$.
Hence, $y(z,x)$ gives a system of linearly independent solutions, which are meromorphic in $z$
with only possible pole at $z=A$. Moreover, it is true that
\begin{equation}  \label{BD7'}
y^{\prime}(z,x )=
\begin{bmatrix} - S(x)^{-1}|\Lambda_{2}(x)|^2 & 1 \end{bmatrix}
w_{A}(z,x )T(z,x).
\end{equation}
It follows from \eqref{BD5} that $q$ may have a singularity at zero (it is shown below
that $q$ indeed has a singularity).
Moreover, according to  \eqref{BD2}, \eqref{eq_ic}, and \eqref{BD5}
$q$ does not change if we replace the initial condition $\Lam(0) = \begin{bmatrix}\upsilon_1 & \upsilon_2 \end{bmatrix}$
by $\Lam(0)= h \begin{bmatrix} \upsilon_1 & \upsilon_2 \end{bmatrix}$ ($h\not =0$).
Thus, without loss of generality  assume  that
\be\label{eq:aa}
\upsilon_1=\upsilon_1^*,\qquad \upsilon_2=1.
\ee
Now, consider the behavior  of the functions $\Lam_k$ and $S$ in a neighborhood
of  $x=0$.  By  \eqref{BD9} and \eqref{BD10} we get
\be\label{BD13}
\Lam_1(x) =\upsilon_1 +O(x), \quad \Lam_2(x) =1+O(x), \quad S(x)=x(1+O(x)).
\ee
Therefore, by \eqref{BD5}, we obtain
\be \label{BD18}
q(x)=\frac{2}{x^2}(1+O(x)).
\ee
So, we deal with a system with singularity.

Since $T(z,x)=I_2+O(x)$, it follows from \eqref{BD6} and  \eqref{BD13} that
\be\label{BD19b}
\ti \phi(z,x):= y(z,x)\begin{bmatrix}
1 \\ -\upsilon_1
\end{bmatrix} \in L^2(0,c).
\ee
Moreover, $\ti \phi$ satisfies $\tau u=z u$ since $y$ does.
Using the following equality
\[
T(z,x)=\cos(\sqrt{z}x)I_2+\frac{\sin(\sqrt{z}x)}{\sqrt{z}}
\begin{bmatrix}
0 & 1  \\  -z  & 0 \end{bmatrix},
\]
it is straightforward to check that
\begin{align}
\widetilde{\phi}(z,x)=\frac{1}{S(x)(z-A)}\Big(&\big(S(x)(z-A)+\Lam_2(x)^*\Lam_1(x)\big)\big(\cos(\sqrt{z}x)-\upsilon_1\frac{\sin(\sqrt{z}x)}{\sqrt{z}}\big)\Big. \nn\\
& \Big.- |\Lam_2(x)|^2\big(\upsilon_1\cos(\sqrt{z}x)+\sqrt{z}\sin(\sqrt{z}x)\big)\Big).
\label{eq:tiphi}
\end{align}
Clearly, $\widetilde{\phi}(z,x)$ is meromorphic with unique possible pole at $z=A$. However, it is
immediate from \eqref{BD9}, \eqref{BD10}, and \eqref{eq:aa} that
\begin{align} & \label{rr1}
\cos(\sqrt{z}x)-\upsilon_1\frac{\sin(\sqrt{z}x)}{\sqrt{z}}=\Lam_2(x)+O(z-A),
\quad z \to A;
\\ \label{rr2}&
 \upsilon_1\cos(\sqrt{z}x)+\sqrt{z}\sin(\sqrt{z}x)=\Lam_1(x)+O(z-A),
\quad z \to A.
\end{align}
Formulas \eqref{eq:tiphi}--\eqref{rr2} imply that
 $\widetilde{\phi}(z,x)$ is entire in $z$.

Let us show that
\be \label{BD21}
\phi(z,x):=(z-A^*)^{-1} \ti \phi(z,x)=(z-A^*)^{-1}y(z,x)
\begin{bmatrix}
1 \\ -\upsilon_1
\end{bmatrix}
\ee
is  real-valued on $\R$. To prove the claim it suffices to show that the function
\be\label{eq:F}
F(z):=S(x)(z-A)+\Lam_1(x)\Lam_2(x)^*
\ee
is real-valued on $\R$. The latter immediately follows from the identity \eqref{BD3}.

Next, consider the solution
\be\label{BD24}
\theta(z,x)=-(z-A)y(z,x)\begin{bmatrix} 0  \\  1 \end{bmatrix}.
\ee
Straightforward calculations give
\be \label{BD22}
\theta(z,x)=-\frac{\sin(\sqrt{z}x)}{S(x)\sqrt{z}}\Big(S(x)(z-A)+\Lam_2(x)^*\Lam_1(x)\Big) -\frac{\cos(\sqrt{z}x)}{S(x)}|\Lam_2(x)|^2.
\ee
Using the fact that the function $F(z)$ defined by \eqref{eq:F} is real for $z\in\R$,
we conclude that the solution $\theta(z,x)$ is entire in $z$ and real for $z\in\R$.

Now, let us  show that $W(\theta, \phi)=1$. Indeed, according to \eqref{BD6}--\eqref{BD7'}
we have
\begin{align}  \label{BD25}
W(\ta , \phi)&= \frac{A-z}{ z- A^*}
\det \left(w_{A}(z,x )T(z,x)
\begin{bmatrix}
0 &1  \\  1& -\upsilon_1
\end{bmatrix}\right)
\\ \nn
&=
 (z-A)(z-A^*)^{-1}
\det w_{A}(z,x ).
\end{align}
By \eqref{BD4}, using \eqref{BD3}, we
get
\be   \label{BD26}
\det w_{A}(z,x )=
(z- A)^{-1}(z- A^*),
\ee
and hence we derive $W(\ta , \phi)=1$.

The behavior of the discrete spectrum of the operators $H^X_{(0,c)}$, $X \in \{D,N\}$,
in this example follows from Theorem~2.5 in \cite{kst} and from Lemma~\ref{lem:hic}.
Solutions $\phi$ and $\theta$, which were constructed above, give an example of the entire solutions
that were discussed in Lemma~\ref{lem:theta}.  Finally, using these solutions we can construct
explicitly a singular Weyl $m$-function $M$. Namely, observe that
\[
T(z,x)\begin{bmatrix}
1  \\ \I \sqrt{z}
\end{bmatrix} =\begin{bmatrix}
\E^{\I\sqrt{z}x}  \\ \I\sqrt{z}\E^{\I\sqrt{z}x}
\end{bmatrix}\in L^2(c, \infty)
\]
for $\im(\sqrt{z}) >0$. Here  the branch cut in $\sqrt{z}$ is chosen along the negative real axis.
Since the transfer function $w_A(z,x)$ is bounded as $x\to +\infty$, we get
\begin{align}& \label{BD30}
\widetilde{\psi}(z,x)= y(x,z)\begin{bmatrix}
1  \\ \I \sqrt{z}
\end{bmatrix} = \begin{bmatrix}1 & 0 \end{bmatrix} w_A(z,x)\begin{bmatrix}
\E^{\I\sqrt{z}x}  \\ \I\sqrt{z}\E^{\I\sqrt{z}x}
\end{bmatrix}\in L^2(c, \infty).
\end{align}
Therefore, by \eqref{BD21}, \eqref{BD24}, and \eqref{BD30} we get
\begin{align}& \label{BD31}
 \psi(z,x)=\theta(z,x) +M(z)\phi(z,x) \in L^2(c, \infty), \quad M(z):=-\frac{(z-A)(z-A^*)}{\I \sqrt{z}+\upsilon_1},
\end{align}
that is, $M(z)$ above is a singular Weyl $m$-function of our system.
Thus, we proved the following lemma.

\begin{lemma}\label{La}
Let the parameters $A \in \C\setminus\{0\}$ and  $\upsilon_1 \in\R$ be  fixed and set $\upsilon_2=1$.
Then the relations \eqref{BD5}, \eqref{BD9}, \eqref{BD10}, and \eqref{BD2b}
explicitly determine a singular potential $q$ satisfying \eqref{BD18}.
The corresponding entire solutions $\phi(z,x)$ and $\theta(z,x)$, such that $\phi(z,x)$ is nonsingular
at $x=0$ and $W(\theta(z), \phi(z))=1$, are given by \eqref{BD21}, \eqref{eq:tiphi}, and \eqref{BD22},  respectively.

Moreover, the singular $m$-function corresponding to this problem is given by \eqref{BD31}.
\end{lemma}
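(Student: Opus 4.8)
The plan is to collect, one by one, the facts that were established in the running construction \eqref{BD1}--\eqref{BD31} preceding the lemma; since the whole appendix was organized as such a construction, the proof amounts to recording which identity is responsible for which assertion. First I would dispose of the potential: the explicit formulas \eqref{BD9}--\eqref{BD10} for the solution of the linear system \eqref{BD2} with the initial data \eqref{eq_ic}, combined with \eqref{BD2b} and the normalization \eqref{eq:aa}, yield the near-zero expansions \eqref{BD13}; substituting these into \eqref{BD5} gives \eqref{BD18}, i.e.\ $q(x)=\frac{2}{x^2}(1+O(x))$, so that $q$ is real-valued on $(0,\infty)$ (recall $S(x)>0$ for $x>0$) and genuinely singular at the origin.

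Next I would handle the two entire solutions. The function $\ti\phi$ of \eqref{BD19b} solves $\tau u=zu$ because $y(z,x)$ does, and since $T(z,x)=I_2+O(x)$ by \eqref{BD7c} and $S(x)=x(1+O(x))$ by \eqref{BD13}, the particular combination in \eqref{BD19b} is square integrable near $x=0$; the closed form \eqref{eq:tiphi} together with the cancellations \eqref{rr1}--\eqref{rr2} at $z=A$ shows that $\ti\phi$ is entire in $z$. Then $\phi=(z-A^*)^{-1}\ti\phi$ of \eqref{BD21} is real on $\R$ because the function $F(z)$ of \eqref{eq:F} is real on $\R$ by \eqref{BD3}, and it remains entire because $\ti\phi$ vanishes at $z=A^*$ (again a consequence of \eqref{BD3}, evaluated at $z=A^*$, in the spirit of \eqref{rr1}--\eqref{rr2}). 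The second solution $\theta$ of \eqref{BD24} has the closed form \eqref{BD22}; its prefactor $(z-A)$ removes the pole of $y$ at $z=A$, so $\theta$ is entire, and reality of $F$ makes it real on $\R$. The normalization $W(\theta,\phi)=1$ is precisely the computation \eqref{BD25}--\eqref{BD26}: expressing $y$ and $y'$ through $w_A(z,x)T(z,x)$ as in \eqref{BD6}--\eqref{BD7'} reduces the Wronskian to $\det w_A(z,x)$, which by \eqref{BD4} and \eqref{BD3} equals $(z-A)^{-1}(z-A^*)$.

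For the $m$-function I would argue as follows: the column $T(z,x)\begin{bmatrix}1\\\I\sqrt z\end{bmatrix}=\begin{bmatrix}\E^{\I\sqrt z x}\\\I\sqrt z\,\E^{\I\sqrt z x}\end{bmatrix}$ lies in $L^2(c,\infty)$ whenever $\im(\sqrt z)>0$, and since $w_A(z,x)$ stays bounded as $x\to+\infty$, the vector $\ti\psi$ of \eqref{BD30} lies in $L^2(c,\infty)$ as well; rewriting $\ti\psi$ through $\phi$ and $\theta$ via \eqref{BD21} and \eqref{BD24} yields \eqref{BD31}, which exhibits $\psi=\theta+M\phi\in L^2(c,\infty)$ with $M(z)=-(z-A)(z-A^*)/(\I\sqrt z+\upsilon_1)$, i.e.\ identifies $M$ as the singular Weyl $m$-function of the system.

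The only genuinely delicate point --- the one I would verify most carefully --- is the entirety in $z$ of $\phi$ and $\theta$: a priori $y(z,x)$, and hence every candidate solution, is merely meromorphic with a possible pole at $z=A$, and one must check that the specific linear combinations in \eqref{BD19b}, \eqref{BD21}, \eqref{BD24}, together with their $(z-A)^{\pm1}$ prefactors, cancel all singularities. This is exactly the role played by the algebraic identity \eqref{BD3} (via \eqref{rr1}--\eqref{rr2} and the reality of $F$); everything else is routine bookkeeping with the transfer matrix $w_A$.
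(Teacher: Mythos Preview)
Your proposal is correct and follows essentially the same path as the paper: the lemma is stated after the sentence ``Thus, we proved the following lemma,'' so the proof in the paper \emph{is} the running construction \eqref{BD1}--\eqref{BD31}, and you accurately catalog which identity underwrites which claim. Your treatment of the delicate point---that $\phi$ is genuinely entire because $\ti\phi$ vanishes at $z=A^*$---is in fact slightly more explicit than the paper, which verifies only that $\ti\phi$ is entire (via \eqref{rr1}--\eqref{rr2}) and that $\phi$ is real on $\R$ (via \eqref{BD3} and the reality of $F$), leaving the removability of the possible pole of $(z-A^*)^{-1}\ti\phi$ at $z=A^*$ implicit.
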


\section{Generalized Nevanlinna functions}
\label{app:gnf}

In this appendix we collect some information on the classes $N_\kappa$
of generalized Nevanlinna functions \cite{krlan}. By $N_\kappa$,  $\kappa \in \N_0$, we denote the set of
all functions $M(z)$ which are meromorphic in $\C_+\cup \C_-$, satisfy the symmetry condition
\be\label{eq:B.01}
M(z)=M(z^*)^*\quad
\ee
for all $z$ from the domain $\mathcal{D}_M$ of holomorphy of $M(z)$,
and for which the Nevanlinna kernel
\be\label{eq:B.02}
\mathcal{N}_M(z,\zeta)=\frac{M(z)-M(\zeta)^*}{z-\zeta^*},\quad
z,\zeta\in\mathcal{D}_M,\ \ z\neq\zeta^*,
\ee
has $\kappa$ negative squares. That is,
for any choice of finitely many points $\{ z_j \}_{j=1}^n \subset \mathcal{D}_M$ the matrix
\be
\left\{ \mathcal{N}_M(z_j,z_k) \right\}_{1\le j,k\le n}
\ee
has  at most $\kappa$ negative eigenvalues and exactly $\kappa$ negative eigenvalues for some choice of $\{z_j\}_{j=1}^n$.
Note that $N_0$ coincides with the class of Herglotz--Nevanlinna functions.

Let $M\in N_\kappa$, $\kappa\ge 1$. A point $\lam_0\in\R$ is said to be a generalized pole of nonpositive
type of $M$ if either
\[
\limsup_{\eps\downarrow 0}\eps|M(\lam_0+\I\eps)|=\infty
\]
or the limit
\[
\lim_{\eps\downarrow 0}(-\I\eps)M(\lam_0+\I\eps)
\]
exists and is finite and negative. The point $\lam_0=\infty$ is said to be a generalized pole of nonpositive type of $M$
if either
\[
\limsup_{y\uparrow\infty}\frac{|M(\I y)|}{\I y}=\infty
\]
or
\[
\lim_{y\uparrow\infty}\frac{M(\I y)}{\I y}
\]
exists and is finite and negative. All limits can be replaced by nontangential limits.

We are interested in the special subclass $N_\kappa^\infty \subset N_\kappa$ of generalized Nevanlinna function with no
nonreal poles and the only generalized pole of nonpositive type at $\infty$. It follows from Theorem~3.1 (and its proof) and
Lemma~3.3 of \cite{krlan} that

\begin{theorem}
A function $M \in N_\kappa^\infty$ admits the representation
\be \label{minkappa}
M(z) = (1+z^2)^k \int_\R \left(\frac{1}{\lam-z} - \frac{\lam}{1+\lam^2}\right) \frac{d\rho(\lam)}{(1+\lam^2)^k}
+ \sum_{j=0}^l a_jz^j,
\ee
where $k\leq \kappa$, $l\le 2\kappa+1$,
\be\label{minkappa'}
a_j\in\R
\quad\text{and}\quad
\int_\R(1+\lam^2)^{-k -1}d\rho(\lam)<\infty.
\ee
The measure $\rho$ is given by the Stieltjes--Liv\v{s}i\'{c} inversion formula
\be
\frac{1}{2} \left( \rho\big((\lam_0,\lam_1)\big) + \rho\big([\lam_0,\lam_1]\big) \right)=
\lim_{\eps\downarrow 0} \frac{1}{\pi} \int_{\lam_0}^{\lam_1} \im\big(M(\lam+\I\eps)\big) d\lam.
\ee
The representation \eqref{minkappa} is called irreducible if $k$ is chosen minimal,
that is, either $k=0$ or $\int_\R(1+\lam^2)^{-k} d\rho(\lam)=\infty$.

Conversely, if \eqref{minkappa'} holds, then $M(z)$ defined via \eqref{minkappa} is in $N_\kappa^\infty$
for some $\kappa$. If $k$ is minimal, $\kappa$ is given by:
\be\label{eq:b.08}
\kappa=\begin{cases}
k, & l\le 2k,\\
\floor{\frac{l}{2}}, & l\ge 2k+1,\ l\ \text{even, or},\ l\ \text{odd and}\ a_l>0,\\
\floor{\frac{l}{2}}+1, & l\ge 2k+1,\ l\ \text{odd and},\ a_l< 0.\end{cases}
\ee
\end{theorem}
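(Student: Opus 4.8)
This theorem is the specialization of the Krein--Langer structure theory for generalized Nevanlinna functions \cite{krlan} to the subclass $N_\kappa^\infty$, and the plan is to assemble it from two ingredients in that work: the operator/integral representation of $N_\kappa$ functions and the negative-square count for explicit rational-plus-integral expressions. For the direct part, let $M\in N_\kappa^\infty$. Since $M$ has no nonreal poles it is meromorphic on $\C\setminus\R$ with poles on $\R$ only, and since its only generalized pole of nonpositive type is $\infty$, all the ``non-Nevanlinna'' behaviour is concentrated at $\infty$. I would make this precise via the self-adjoint operator (relation) model of $M$ in a Pontryagin space $\Pi_\kappa$, which yields both the a priori growth bound $M(\I y)=O(y^{2\kappa+1})$ as $y\to\infty$ and — since the negative subspace of the model is carried by generalized eigenvectors at $\infty$ — the statement that for the minimal $k$ with this property (so $k=0$ or $\int_\R(1+\lam^2)^{-k}d\rho=\infty$, and this $k$ is $\le\kappa$) the function $(1+z^2)^{-k}M(z)$ is, after subtraction of a real polynomial, an ordinary Herglotz--Nevanlinna function. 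This is essentially the content of \cite[Thm.~3.1]{krlan}.

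Given that reduction, I would apply the classical Herglotz representation to the quotient, writing $(1+z^2)^{-k}M(z)-P(z)=c+\int_\R\big(\frac{1}{\lam-z}-\frac{\lam}{1+\lam^2}\big)d\sigma(\lam)$ with $c\in\R$ and $\int_\R(1+\lam^2)^{-1}d\sigma<\infty$. Setting $d\rho:=(1+\lam^2)^k\,d\sigma$ produces the moment condition in \eqref{minkappa'}; multiplying back by $(1+z^2)^k$ and collecting all polynomial terms gives \eqref{minkappa}, with $l\le 2\kappa+1$ read off from the growth bound and with $a_j\in\R$ and $\rho\ge 0$ forced by the symmetry \eqref{eq:B.01}. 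The Stieltjes--Liv\v{s}i\'{c} inversion formula for $\rho$ then follows from the corresponding statement for $\sigma$ together with a routine variant of Lemma~\ref{lem:sif} applied with the test weight $(1+\lam^2)^k$, once one checks (exactly as in the proof of Theorem~\ref{IntR}) that the extra real polynomial prefactor and summand do not disturb the vague limit of $\frac1\pi\im\big(M(\lam+\I\eps)\big)\,d\lam$.

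For the converse, and for the value of $\kappa$ in \eqref{eq:b.08}, I would compute the number of negative squares of the Nevanlinna kernel \eqref{eq:B.02} of an $M$ given by \eqref{minkappa}. Writing $M=M_{\mathrm{int}}+P$ with $M_{\mathrm{int}}$ the integral term and $P=\sum_{j=0}^l a_jz^j$, one has $\mathcal{N}_M=\mathcal{N}_{M_{\mathrm{int}}}+\mathcal{N}_P$; since $M_{\mathrm{int}}(z)=(1+z^2)^k\widehat M(z)$ with $\widehat M\in N_0$ and $\mathcal{N}_{\widehat M}\ge 0$, one shows that $\mathcal{N}_M$ is a nonnegative kernel plus a hermitian kernel of finite rank, built explicitly from the Hankel matrix of $(a_j)_{j=0}^l$ and from the polynomial correction generated by expanding the prefactor $(1+z^2)^k$. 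Determining the signature of this finite matrix is the crux of the matter: it reduces to the classical fact that the form $\frac{P(z)-P(\zeta)^*}{z-\zeta^*}$ attached to a real polynomial of degree $l$ has negative index $\floor{l/2}$, resp.\ $\floor{l/2}+1$, according to the parity of $l$ and $\sgn(a_l)$, plus a further contribution of $k$ coming from the $(1+z^2)^k$ factor precisely when $l\le 2k$, while minimality of $k$ excludes cancellations that would lower the count. This last bookkeeping is exactly \cite[Lem.~3.3]{krlan}, and it is the main obstacle; since everything else in the argument is standard half-plane function theory, in the paper I would simply invoke \cite{krlan} for it rather than reproduce the matrix computation.
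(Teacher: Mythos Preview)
Your proposal is correct and matches the paper's approach exactly: the paper does not give an independent proof of this theorem but simply records that it follows from Theorem~3.1 (and its proof) and Lemma~3.3 of \cite{krlan}, which are precisely the two ingredients you invoke. Your write-up just fleshes out what those citations contain; there is nothing to add or correct.
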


For additional equivalent conditions we refer to Definition~2.5 in \cite{DLSh}.

Given a generalized Nevanlinna function in $N_\kappa^\infty$, the corresponding $\kappa$
is given by the multiplicity of the generalized pole at $\infty$ which is determined by the facts
that the following limits exist and take values as indicated:
\[
\lim_{y\uparrow \infty} -\frac{M(\I y)}{(\I y) ^{2\kappa-1}} \in (0,\infty],
\qquad
\lim_{y\uparrow \infty} \frac{M(\I y)}{(\I y) ^{2\kappa+1}} \in [0,\infty).
\]
Again the limits can be replaced by nontangential ones.
This follows from Theorem~3.2 in \cite{lan}. To this end note that if $M(z) \in N_\kappa$, then $-M(z)^{-1}$,
$-M(1/z)$, and $1/M(1/z)$ also belong to $N_\kappa$. Moreover, generalized zeros of $M(z)$ are generalized poles
of $-M(z)^{-1}$ of the same multiplicity.

\begin{lemma}\label{lem:gnf:grow}
Let $M(z)$ be a generalized Nevanlinna function given by \eqref{minkappa}--\eqref{minkappa'} with $l<2k+1$.
Then, for every $0<\gam<2$, we have
\be
\int_\R \frac{d\rho(\lam)}{1+|\lam|^{2k+\gam}} < \infty \quad\Longleftrightarrow\quad
\int_1^\infty \frac{(-1)^k \im(M(\I y))}{y^{2k+\gam}} dy<\infty.
\ee
Concerning the case $\gam=0$ ,we have
\be
\int_\R \frac{d\rho(\lam)}{(1+\lam^2)^k} = \lim_{y\to\infty} \frac{(-1)^k \im(M(\I y))}{y^{2k-1}},
\ee
where the two sides are either both finite and equal or both infinite.
\end{lemma}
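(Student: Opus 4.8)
The plan is to evaluate the representation \eqref{minkappa} along the imaginary axis. Put $z=\I y$ with $y>1$; then $1+(\I y)^2=1-y^2$ is real and negative, so $(1+z^2)^k$ only contributes the real, sign-changing factor $(1-y^2)^k=(-1)^k(y^2-1)^k$, while $\im\big(\tfrac{1}{\lam-\I y}-\tfrac{\lam}{1+\lam^2}\big)=\tfrac{y}{\lam^2+y^2}$. Hence
\[
(-1)^k\im\big(M(\I y)\big)=(y^2-1)^k\int_\R\frac{y}{\lam^2+y^2}\,\frac{d\rho(\lam)}{(1+\lam^2)^k}+(-1)^k\im\Big(\sum_{j=0}^l a_j(\I y)^j\Big).
\]
Since $l<2k+1$ means $l\le 2k$, the last term is a real polynomial in $y$ of degree at most $2k-1$; divided by $y^{2k+\gam}$ with $\gam>0$ it is $O(y^{-1-\gam})$, hence absolutely integrable on $[1,\infty)$, so it may be discarded in the equivalence (and, being of lower order, does not affect the $\gam=0$ limit).

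For $0<\gam<2$ I would integrate the identity above over $y\in[1,\infty)$ and apply Tonelli's theorem to the nonnegative remaining integrand, which gives
\[
\int_1^\infty\frac{(-1)^k\im(M(\I y))}{y^{2k+\gam}}\,dy=\int_\R J(\lam)\,\frac{d\rho(\lam)}{(1+\lam^2)^k}+O(1),\qquad J(\lam):=\int_1^\infty\frac{(y^2-1)^k\,y}{y^{2k+\gam}\,(\lam^2+y^2)}\,dy.
\]
Everything then reduces to the two-sided bound $J(\lam)\asymp(1+\lam^2)^{-\gam/2}$. For bounded $\lam$ this is clear. For $|\lam|$ large I would split at $y=|\lam|$: on $[|\lam|,\infty)$ one has $\lam^2+y^2\asymp y^2$ and $(y^2-1)^k\asymp y^{2k}$, contributing $\asymp\int_{|\lam|}^\infty y^{-1-\gam}\,dy\asymp|\lam|^{-\gam}$; on $[1,|\lam|]$ one has $\lam^2+y^2\asymp\lam^2$, contributing $\asymp|\lam|^{-2}\int_1^{|\lam|}y^{1-\gam}\,dy\asymp|\lam|^{-\gam}$, the exponent $1-\gam>-1$ (i.e.\ $\gam<2$) being exactly what yields the power $|\lam|^{2-\gam}$. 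Together with $(1+|\lam|^{2k+\gam})^{-1}\asymp(1+\lam^2)^{-k-\gam/2}\asymp J(\lam)(1+\lam^2)^{-k}$, this shows that the left-hand integral is finite if and only if $\int_\R(1+|\lam|^{2k+\gam})^{-1}\,d\rho(\lam)<\infty$.

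For $\gam=0$ I would instead take the limit directly. Writing $\tfrac{y}{\lam^2+y^2}=\tfrac1y\cdot\tfrac{y^2}{\lam^2+y^2}$ and dividing by $y^{2k-1}$,
\[
\frac{(-1)^k\im(M(\I y))}{y^{2k-1}}=\frac{(y^2-1)^k}{y^{2k}}\int_\R\frac{y^2}{\lam^2+y^2}\,\frac{d\rho(\lam)}{(1+\lam^2)^k}+(\text{lower order}).
\]
Here $(y^2-1)^k/y^{2k}\to1$, and for each fixed $\lam$ the factor $y^2/(\lam^2+y^2)$ is increasing in $y$ with limit $1$, so by monotone convergence $\int_\R\tfrac{y^2}{\lam^2+y^2}\tfrac{d\rho(\lam)}{(1+\lam^2)^k}\uparrow\int_\R(1+\lam^2)^{-k}\,d\rho(\lam)$, finite or $+\infty$; this is the asserted equality. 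The main obstacle is the estimate $J(\lam)\asymp(1+\lam^2)^{-\gam/2}$ — i.e.\ a careful splitting of the one-dimensional integral $J(\lam)$ — while the remaining steps are the explicit computation of $\im(M(\I y))$ from \eqref{minkappa}, routine Tonelli and monotone-convergence bookkeeping, and the elementary comparison $1+|\lam|^{2k+\gam}\asymp(1+\lam^2)^{k+\gam/2}$.
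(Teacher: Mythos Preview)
Your argument is exactly what the paper has in mind: its proof merely cites Kac--Krein and says ``integral representation plus monotone convergence'', and you carry this out in full --- the Tonelli step together with the kernel estimate $J(\lam)\asymp(1+\lam^2)^{-\gam/2}$ (via splitting at $y=|\lam|$) for the first part, and monotone convergence on $y^2/(\lam^2+y^2)\uparrow 1$ for the second.

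One slip: for $\gam=0$ you dismiss the polynomial contribution as ``lower order'', but $\im\big(\sum_{j\le l} a_j(\I y)^j\big)$ can have degree exactly $2k-1$ in $y$ (whenever $l\ge 2k-1$ and $a_{2k-1}\ne 0$), and after division by $y^{2k-1}$ it leaves the nonzero constant $(-1)^k\,\im\big(a_{2k-1}\,\I^{2k-1}\big)=-a_{2k-1}$ rather than $o(1)$. Your monotone-convergence step then actually yields
\[
\lim_{y\to\infty}\frac{(-1)^k\im(M(\I y))}{y^{2k-1}}=\int_\R\frac{d\rho(\lam)}{(1+\lam^2)^k}-a_{2k-1};
\]
the identity as stated is literally correct only when $a_{2k-1}=0$ (which is the case in all of the paper's applications, cf.\ Theorem~\ref{thm:nkap}), while the ``both infinite'' clause survives since the integral term dominates.
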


\begin{proof}
The first part follows directly from \cite[\S 3.5]{kk} (see also \cite[Lem.~9.20]{tschroe}). The second part follows by evaluating
the limit on the right-hand side using the integral representation plus monotone convergence (see e.g. \cite[\S 4]{kk}).
\end{proof}

\bigskip
\noindent
{\bf Acknowledgments.}
We thank Vladimir Derkach, Jonathan Eckhardt, Fritz Gesztesy, Daphne Gilbert, Annemarie Luger, and Mark Malamud for several helpful discussions.
A.K. acknowledges the hospitality and financial support of the Erwin Schr\"odinger Institute and financial support from the IRCSET PostDoctoral Fellowship Program.

\end{document}